\theoremstyle{plain} 
\newtheorem{prop}{Proposition}[section] 
\newtheorem{thm}[prop]{Theorem}
\newtheorem{lem}[prop]{Lemma}
\theoremstyle{definition}
\newtheorem{defi}[prop]{Definition}
\newtheorem{rem}[prop]{Remark}
\newcommand{\megane}[1]{{}}
\newcommand{\ioe}{\leq}
\newcommand{\soe}{\geq}
\newcommand{\1}{\mathrm{1~\hspace{-1.4ex}l}}
\DeclareMathOperator{\lf}{lf}
\DeclareMathOperator{\hf}{hf}
\DeclareMathOperator{\Span}{Span}
\DeclareMathOperator{\Id}{Id}
\DeclareMathOperator{\J}{\scriptscriptstyle J}
\def\R{\mathbb{R}}
\def\C{\mathbb{C}}
\def\N{\mathbb{N}}
\def\P{\mathbb{P}}
\def\Z{\mathbb{Z}}
\def\eps{\varepsilon}
\def \S{\mathcal{S}}
\def \H{\mathcal{H}_{\J}}
\def \L{\mathcal{L}}
\def \tild{\widetilde}
\title{Local controllability of the bilinear 1D Schrödinger equation with simultaneous estimates}
\author{Mégane Bournissou}
\begin{document}
\maketitle

\begin{abstract}
We consider the 1D linear Schrödinger equation, on a bounded interval, with Dirichlet boundary conditions and bilinear scalar control. The small-time local exact controllability around the ground state was proved in \cite{BL10}, under an appropriate nondegeneracy assumption. Here, we work under a weaker nondegeneracy assumption and we prove the small-time local exact controllability in projection, around the ground state, with estimates on the control (depending linearly on the target) simultaneously in several spaces. These estimates are obtained at the level of the linearized system, thanks to a new result about trigonometric moment problems. Then, they are transported to the nonlinear system by the inverse mapping theorem, thanks to appropriate estimates of the error between the nonlinear and the linearized dynamics. 
\end{abstract}

\tableofcontents

\AtEndDocument{\bigskip{\footnotesize%
\textsc{Univ Rennes, CNRS, IRMAR - UMR 6625, F-35000 Rennes, France.} \par  
\textit{E-mail adress: }{\texttt{megane.bournissou@ens-rennes.fr}}}}

\section{Introduction and main result}
\subsection{Description of the controlled system}
Let $T>0$. In this paper, we consider the 1D Schrödinger equation, 
\begin{equation}  
\label{Schrodinger} \left\{
    \begin{array}{ll}
        i \partial_t \psi(t,x) = - \partial^2_x \psi(t,x) -u(t)\mu(x)\psi(t,x),  \quad (t,x) \in (0,T) \times (0,1),\\
        \psi(t,0) = \psi(t,1)=0, \quad t \in (0,T).
    \end{array}
\right.  \end{equation}
In quantum physics, this equation describes a quantum particle, in an infinite potential well, subjected to an electric field whose amplitude is given by $u(t)$. The dipolar moment, $\mu: (0,1) \rightarrow \mathbb{R}$, depicts the interaction between the electric field and the particle. This equation is a bilinear control system where the state is the wave function $\psi$, such that $\| \psi(t) \|_{L^2(0,1)} = 1$ for all time and $u : (0,T) \rightarrow \mathbb{R}$ denotes a scalar control. 
\subsection{Functional settings}
Unless otherwise specified, in space, we will work with complex valued functions. The Lebesgue space $L^2(0,1)$ is equipped with the classical hermitian scalar product. Let $\mathcal{S}$ be the unit-sphere of $L^2(0,1)$.
The operator $A$ is defined by
\begin{equation*}
D(A):=H^2(0,1) \cap H^1_0(0,1), \quad A\varphi:=-\frac{d^2 \varphi}{dx^2}.
\end{equation*}
Its eigenvalues and eigenvectors are given by
\begin{equation*}
\forall j \in \N^*, \quad \lambda_j:= (j \pi)^2 \quad \text{ and } \quad \varphi_j:=\sqrt{2} \sin(j \pi \cdot).
\end{equation*}
The family of the eigenvectors $(\varphi_j)_{j \in \N^*}$ is an orthonormal basis of $L^2(0,1)$. We also introduce, for all $j \in \N^*$, $\psi_j(t,x):= \varphi_j(x) e^{-i \lambda_j t}$ for $(t,x) \in \R \times [0,1]$, which are solutions of the Schrödinger equation \eqref{Schrodinger} for $u \equiv 0$. When $k=1$, $\psi_1$ is the ground state. We also introduce the normed spaces linked to the operator $A$, given by, for all $s>0$, 
\begin{equation*}
H^s_{(0)}(0,1):=D(A^{\frac{s}{2}}), \quad \|\varphi\|_{H^s_{(0)}(0,1)} := \| \left( \langle  \varphi, \varphi_j\rangle \right)_{j \in \N^*} \|_{h^s}=\left( \sum \limits_{j=1}^{+\infty} | j^s \langle  \varphi, \varphi_j\rangle  |^2 \right)^{\frac{1}{2}}.
\end{equation*}
If $J$ is a subset of $\N^*$, then we define 
\begin{equation*}
\H := \overline{\Span_{\C}} \left( \varphi_j , \ j \in J \right),
\end{equation*}
and we introduce the orthogonal projection on $\H$, given by, 
\begin{equation*}
\begin{array}{ccrcl}
\P_{\J} & : & L^2(0,1) & \to & \H \\
 & & \psi & \mapsto & \psi - \sum \limits_{j \not\in J} \langle \psi, \varphi_j \rangle \varphi_j. \\
\end{array}
\end{equation*}
For $T>0$ and $u \in L^1(0,T)$, the family $(u_n)_{n \in \N}$ of the iterated primitives of $u$ is defined by induction as, 
\begin{equation*}
u_0:=u \quad \text{ and } \quad \forall n \in \mathbb{N}, \ u_{n+1}(t) := \int_0^t u_n(\tau) d\tau, \quad t \in [0,T].
\end{equation*}
We will also consider, for any integer $k \in \N$, $H^k \left( (0,T), \R \right)$, the usual integer-order real Sobolev spaces, equipped with the usual $H^k(0,T)-$norm 
and $H^k_0(0,T)$ the adherence of $C_c^{\infty}(0,T)$, the set of functions with compact support inside $(0,T)$, for the topology $\| \cdot \|_{H^k(0,T)}$. By Poincaré inequality, $H^k_0(0,T)$ can be equipped with the norm
\begin{equation*}
\| u \|_{H^k_0(0,T)}:= \left(  \int_0^T u^{(k)}(t)^2 dt \right)^{1/2}.
\end{equation*}
For any integer $k \in \N^*$, the negative $H^{-k}(0,T)$-norm is not defined by duality as usual but for every $u \in L^2(0,T)$ by 
\begin{equation}
\label{def_norm_faible}
\| u \|_{H^{-k}(0,T)} := | u_1(T) | + \| u_k \|_{L^2(0,T)}, 
\end{equation}
as such norms seem to arise naturally in both the nonlinear and linearized dynamics. For the sake of simplicity, we will sometimes omit $(0,T)$ or $(0,1)$ on the spaces.
\subsection{Main result}
The regularity assumptions play a crucial role in the validity of controllability results. Therefore, we define the following precise notion of small-time local controllability (STLC) used in this paper, stressing the regularity imposed on both the control and the data to be controlled. 
\begin{defi}[STLC around the ground state in $X$ with controls in $Y$]
Let $X$ be a vector space of complex-valued functions defined on $[0,1]$ and $(Y_T, \|\cdot\|_{Y_T})$ be a family of normed vector spaces of real-valued functions defined on $[0,T]$, for $T> 0$. The system \eqref{Schrodinger} is said to be STLC around the ground state in $X$ with controls in $Y$ if for every $T>0$, for every $\varepsilon>0$, there exists $\delta> 0$ such that for every $\psi_*, \psi_f$ in $\mathcal{S}\cap X$ with $\|\psi_* - \psi_1(0)\|_{X} < \delta$ and $\|\psi_f - \psi_1(T)\|_{X} < \delta$,  there exists $u \in L^2(0,T) \cap Y_T$ with $\|u\|_{Y_T} < \varepsilon$ such that the solution $\psi$ of \eqref{Schrodinger} associated with the initial condition $\psi_*$ satisfies $\psi(T)=\psi_f$.  
\end{defi}
\noindent 
Since \cite{BL10}, it is known that if 
\begin{equation}
\label{hyp_mu_old}
\text{there exists a constant } c>0 \text{ such that for all } j \in \N^*, \ | \langle \mu \varphi_1, \varphi_j \rangle | \geq \frac{c}{j^{3}},
\end{equation}
then for any $k \in \N$, the Schrödinger equation \eqref{Schrodinger} is STLC around the ground state in $H^{2k+3}_{(0)}$ with controls in $H^k_0$. However, in \cite{BL10}, the associated control map $(\psi_0, \psi_f) \mapsto u$ depends on $k$. In this article, two goals are tackled:
\begin{itemize}
\item first, building a unique control map for the nonlinear system with simultaneous estimates in various control/data spaces,
\item second, dealing with control in projection when an assumption of the type \eqref{hyp_mu_old} holds only on a subset $J$ of $\N^*$.
\end{itemize}
Our main result is the following one. 
\begin{thm}
\label{thm:contr_lin_proj}
Let $(p, k) \in \N^2$ with $p \soe k$, $J$ a subset of $\N^*$ and $\mu \in H^{2(p+k)+3}( (0,1), \R)$ with $\mu^{(2n+1)}(0)=\mu^{(2n+1)}(1)=0$ for all $n=0, \ldots, p-1$, such that
\begin{equation}
\label{hyp_mu}
\text{there exists a constant } c>0 \text{ such that for all } j \in J, \ | \langle \mu \varphi_1, \varphi_j \rangle | \geq \frac{c}{j^{2p+3}}.
\end{equation}
The Schrödinger equation \eqref{Schrodinger} is STLC in projection around the ground state in $H^{2(p+m)+3}_{(0)}$ with controls in $H^m_0(T_0,T)$, for every $m \in \{0, \ldots, k \}$ with the same control map. 

\bigskip \noindent 
More precisely, for all initial time $T_0 \soe 0$ and final time $T> T_0$, there exists $C$, $\delta >0$ and a $C^1$-map $\Gamma : \Omega_{T_0} \times \Omega_T \rightarrow H^k_0( (T_0,T), \R)$ where 
\begin{align}
\label{def_Omega_T0}
\Omega_{T_0} &:= \{ \psi_0 \in \S \cap H^{2(p+k)+3}_{(0)} ; \ \| \psi_0 - \psi_1(T_0) \|_{H^{2(p+k)+3}_{(0)}} < \delta \},
\\
\label{def_Omega_T}
\Omega_T &:= \{ \psi_f \in \H \cap H^{2(p+k)+3}_{(0)} ; \ \| \psi_f - \P_{\J} \left( \psi_1(T) \right) \|_{H^{2(p+k)+3}_{(0)}} < \delta \},
\end{align}
such that $\Gamma(\psi_1(T_0), \psi_1(T))=0$ and for every $(\psi_0, \psi_f) \in \Omega_{T_0} \times \Omega_T$, the solution of \eqref{Schrodinger} on $[T_0, T]$ with control $u:=\Gamma(\psi_0,\psi_f)$ and initial condition $\psi_0$ at $t=T_0$ satisfies 
$$
 \P_{\J} \left( \psi(T) \right)= \psi_f,
$$
with the following boundary conditions
\begin{equation}
\label{eq:weak_bc_nl}
u_2(T)= \ldots= u_{k+1}(T)=0, 
\end{equation}
where here $(u_n)_{n \in \N}$ denotes the iterated primitives of $u$ vanishing at $T_0$. Besides, for all $m$ in $\{-(k+1), \ldots, k\}$, the following estimates hold
\begin{equation}
\label{estim_contr_nl}
\| u \|_{H^m_0(T_0,T)}
\ioe
C
\left(
\| \psi_0 - \psi_1(T_0) \|_{H^{2(p+m)+3}_{(0)}} 
+
\| \psi_f - \P_{\J} \psi_1(T) \|_{H^{2(p+m)+3}_{(0)}} 
\right)
.
\end{equation}
\end{thm}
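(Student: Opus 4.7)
The plan is to use the classical two-step strategy of linearization around the ground-state trajectory followed by an inverse-mapping argument; the novelty lies in obtaining a single control map that satisfies estimates in several Sobolev scales simultaneously. I would write the solution of \eqref{Schrodinger} as $\psi=\psi_1+\Psi+R$, where $\Psi$ solves the linearized equation
\begin{equation*}
i\partial_t\Psi=-\partial_x^2\Psi-u(t)\mu(x)\psi_1(t,x),\quad \Psi(T_0)=0,\quad \Psi|_{x=0,1}=0,
\end{equation*}
and $R$ is the nonlinear remainder. The modal expansion $\Psi(t)=\sum_{j\soe 1}a_j(t)\psi_j(t)$ gives the explicit coefficients
\begin{equation*}
a_j(T)=i\langle\mu\varphi_1,\varphi_j\rangle\int_{T_0}^{T}u(s)\,e^{i(\lambda_j-\lambda_1)s}\,ds,
\end{equation*}
so that reaching a projected target at the linearized level amounts to a trigonometric moment problem on the family $(e^{i(\lambda_j-\lambda_1)\cdot})_{j\in J}$.

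The core of the argument will be the resolution of this moment problem with simultaneous $H^m_0((T_0,T),\R)$ bounds on $u$ for every $m\in\{-(k+1),\ldots,k\}$. Assumption \eqref{hyp_mu} makes the inversion of $\langle\mu\varphi_1,\varphi_j\rangle$ cost $2p+3$ derivatives, which is precisely absorbed by the data space $H^{2(p+m)+3}_{(0)}$: integrating $u\in H^m_0$ by parts $m$ times produces the decay $j^{-2m}$ of the moments, so targets whose coefficients $(a_j)_{j\in J}$ are $\ell^2$ with weight $j^{2(p+m)+3}$ are reachable. I would construct $u$ from a biorthogonal family to these exponentials and impose the vanishing conditions \eqref{eq:weak_bc_nl} on its iterated primitives so that, in view of the non-dual definition \eqref{def_norm_faible}, the same $L^2$-type quantities control both positive and negative norms after repeated integration by parts. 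Producing a \emph{single} biorthogonal family that gives continuous inverses into every $H^m_0$ scale with the prescribed boundary conditions is the principal analytical obstacle; it corresponds to the new trigonometric moment result announced in the abstract.

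Once the linear construction is available, the passage to the nonlinear system is carried out by an inverse mapping argument applied to the end-point map
\begin{equation*}
F(\psi_0,u):=\P_{\J}\psi(T;\psi_0,u)-\P_{\J}\psi_1(T),
\end{equation*}
viewed as a $C^1$ map from a neighborhood of $(\psi_1(T_0),0)$ in $(\S\cap H^{2(p+k)+3}_{(0)})\times H^k_0((T_0,T),\R)$, restricted to controls fulfilling \eqref{eq:weak_bc_nl}, into $\P_{\J}H^{2(p+k)+3}_{(0)}$. Its partial differential in $u$ at the origin coincides with the linearized end-point map, which admits a continuous right inverse by the previous step, so the inverse mapping theorem yields $\Gamma$ together with the top-scale estimate \eqref{estim_contr_nl} for $m=k$. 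To propagate the bound to every $m\in\{-(k+1),\ldots,k\}$, I would complement this with quantitative multi-scale bounds on the remainder of the form
\begin{equation*}
\|R(T)\|_{H^{2(p+m)+3}_{(0)}}\ioe C\,\|u\|_{H^m_0}\,\|u\|_{H^k_0},
\end{equation*}
derived from Duhamel's formula through the iterated primitives of $u$ and the regularity properties of the free Schrödinger group. A bootstrap then writes $u=G(\text{target})-G(\text{quadratic}(u))$, where $G$ is the linear right inverse; combined with the simultaneous linear estimates, the bilinear error bounds absorb the term $\|u\|_{H^k_0}$ by smallness and deliver \eqref{estim_contr_nl} for each $m$. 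Establishing these multi-scale error estimates, compatible with the delicate negative-norm formalism of \eqref{def_norm_faible}, is the second main difficulty.
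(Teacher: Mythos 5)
Your overall architecture coincides with the paper's: linearize around $\psi_1$, solve the resulting trigonometric moment problem with simultaneous multi-scale estimates, apply an inverse mapping argument, and propagate the estimates through the fixed-point iteration using multi-scale bounds on the quadratic remainder. However, there are two places where your plan as written would not go through without modification, and one place where you and the paper genuinely diverge in technique.

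First, your end-point map $F(\psi_0,u)=\P_{\J}\psi(T;\psi_0,u)-\P_{\J}\psi_1(T)$ cannot have a surjective differential onto a full neighborhood in $\P_{\J}H^{2(p+k)+3}_{(0)}$, because the solution lives on the $L^2$-sphere: the real part of the first projected coefficient is constrained. The linearized end-point map's range is contained in the tangent space $T_{\S}\psi_1(T)$, not in all of $\H\cap H^{2(p+k)+3}_{(0)}$. The paper therefore composes with the orthogonal projection $\Pi_{\psi_1(T)}$ onto $T_{\S}\psi_1(T)$, and recovers the missing real part from the sphere constraint afterwards. Without that projection, the inverse mapping theorem does not apply to $F$. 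Relatedly, the paper bundles $\psi_0$ into the output of the end-point map $\Theta_T(\psi_0,u)=(\psi_0,\Pi_{\psi_1(T)}\P_{\J}\psi(T))$ so that the right inverse depends linearly on both $\psi_0$ and $\psi_f$; your remainder estimate $\|R(T)\|_{H^{2(p+m)+3}_{(0)}}\ioe C\|u\|_{H^m_0}\|u\|_{H^k_0}$ omits the $\|\psi_0-\varphi_1\|$ contribution, which you will need to reach the two-sided bound \eqref{estim_contr_nl}. (The paper's Proposition on the remainder has $\|u\|_{H^m}\bigl(\|\psi_0-\varphi_1\|_{H^{2(p+k)+3}_{(0)}}+\|u\|_{H^k_0}\bigr)$, and the boundary conditions \eqref{eq:weak_bc_nl} are precisely what make the $H^{-(k+1)}$-scale version of this bound work, via $k+1$ integrations by parts on the Duhamel term.)

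Second, on the moment problem you propose to "construct $u$ from a biorthogonal family" with uniform estimates across all scales $H^m_0$, $m=-(k+1),\ldots,k$. The paper deliberately avoids that route, because obtaining sharp, simultaneous bounds on a biorthogonal family over a full range of Sobolev scales is exactly the hard step. Instead, it splits the frequencies into a finite low-frequency block (handled directly, all norms being equivalent on finite blocks) and a high-frequency tail, where it takes the ansatz $P(d)(t)=\sum_j d_j e^{i\omega_j t}\chi(t)$ with a fixed $\chi\in C_c^\infty(0,T)$; the rapid decay of $\langle e^{i\omega_j\cdot}\chi, e^{i\omega_p\cdot}\rangle$ off the diagonal (under a polynomial gap condition) makes $L\circ P-\Id$ a small operator simultaneously on every $h^{2m}_{\omega,r}$, so a Neumann-series iteration gives a common right inverse in all scales at once. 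Your biorthogonal approach could in principle yield the same conclusion (and similar estimates appear in the parabolic literature), but you would have to supply the uniform-in-$m$ bounds on the biorthogonal functions, which is a nontrivial additional piece of work that the paper's iterative construction sidesteps.
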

To simplify the notations, in all the following, we will take the initial time $T_0$ equal to $0$, the proof when $T_0>0$ is deduced by translation of controls and a change of global phase on the state. Moreover, from now on, if not mentioned, $T>0$ will denote the final time, $p$ and $k$ two integers, and $J$ a subset of $\N^*$. 
\begin{rem}
Assume that $J$ contains an infinite subset of $2\N$ and one of $2\N+1$. Then, for all $\mu$ in $H^{2p+3}(0,1)$ with $\mu^{(2n+1)}(0)=\mu^{(2n+1)}(1)=0$ for all $n=0, \ldots, p-1$, assumption \eqref{hyp_mu} is equivalent to 
\begin{equation*}
\mu^{(2p+1)}(0)\pm \mu^{(2p+1)}(1)\neq 0 \quad \text{ and } \quad \ \forall j \in J, \ \langle \mu \varphi_1, \varphi_j \rangle \neq 0, 
\end{equation*}
as, by integrations by parts and by Riemann-Lebesgue Lemma, 
\begin{equation*}
\langle \mu \varphi_1, \varphi_j \rangle = \frac{(-1)^p2(2p+2)}{\pi^{2p+2} j^{2p+3}} \left( (-1)^{j+1} \mu^{(2p+1)}(1)-\mu^{(2p+1)}(0) \right) + \underset{j \rightarrow +\infty}{o} \left( \frac{1}{j^{2p+3}} \right). 
\end{equation*}
\end{rem}
This result is both a new control result and a toolbox for future works about nonlinear control of the Schrödinger equation \eqref{Schrodinger}. 
Indeed, such result can for example give a framework to prove positive controllability results on the Schrödinger equation, with nonlinear tools, when some of the coefficients $\langle \mu \varphi_1, \varphi_j \rangle$ vanish. In that case, building a unique control map with estimates in simultaneous spaces can be useful to perform specific motions for the nonlinear solution. 
The proof of \cref{thm:contr_lin_proj} is in three steps. 
\begin{itemize}
\item In \cref{well-posedness}, we study the well-posedness of the Schrödinger equation and more precisely the regularity of the solutions with respect to the boundary conditions on the dipolar moment $\mu$.
\item In \cref{subsection:moment}, we present a new result about the solvability of trigonometric moment problems in high regularity spaces with simultaneous estimates.
\item This new moment result allows in \cref{main_theorem} and more precisely in Subsection \ref{C1-regularity} to build a linear control operator, for the linearized system around the ground state, with simultaneous estimates in various control/data spaces. Then, in Subsection \ref{main_proof}, we prove that the iterations of the inverse mapping theorem propagate these estimates to the nonlinear control operator of \eqref{Schrodinger}.
\end{itemize}

\begin{rem}
Actually, the question of building a control function that inherits the regularity of the data to be controlled has already been tackled by Ervedoza and Zuazua in \cite{EZ10}, for time-reversible linear systems. As the use of \cite{EZ10} is not straightforward in our case, we choose to present in this article a new result about trigonometric moment problems solving this question. However, in Section \ref{papier_Ervedoza}, we also explain how the controllability of the linearized system with simultaneous estimates can be proved using the results of \cite{EZ10}. 
\end{rem}

\subsection{Bibliography}

\paragraph{Local exact controllability results.} From a general negative result on the controllability of bilinear control systems by Ball, Marsden and Slemrod \cite{BMS82},  Turinici in \cite{T00} deduced a negative control result for the Schrödinger equation \eqref{Schrodinger}: for a given initial data $\psi_0 \in H^2_{(0)}(0,1) \cap \S$, the reachable set with controls in $L^r_{loc}( (0, +\infty), \R)$, with $r>1$, has an empty interior in $H^2_{(0)}(0,1) \cap \S$. The case of controls in $L^1_{loc}( (0, +\infty), \R)$ has been proved later in \cite{BCC20} by Boussaid, Caponigro, and Chambrion.  

However, choosing more appropriate functional spaces, exact local controllability results for 1D models have been proved by Beauchard in \cite{B05, B08}, whose proofs have been later simplified by Beauchard and Laurent in \cite{BL10} by means of a hidden regularizing effect and an inverse mapping theorem (instead of Nash-Moser's one). 

This strategy was later developed by Morancey and Nersesyan to control one Schrödinger equation with a polarizability term \cite{MN14} or a finite number of equations with one control \cite{M14, MN15}. This was also used by Puel \cite{P16} to prove the local exact controllability for a Schrödinger equation, in a bounded regular domain, in a neighborhood of an eigenfunction corresponding to a simple eigenvalue in dimension $N \leq 3$. 

\paragraph{Global approximate results.} With geometric techniques for the controllability of the Galerkin approximations, in \cite{CMSB09} Chambrion, Mason, Sigalotti, and Boscain prove the approximate controllability of Schrödinger in $L^2$ under hypotheses later refined by Boscain, Caponigro, Chambrion, and Sigalotti in \cite{BCCS12}. In higher order Sobolev spaces, similar results were proved for one \cite{BCC20} or a finite number of equations \cite{BCS14}. Such types of results can also be proved from exact controllability results in infinite time \cite{NN12bis} or from a variational argument \cite{N09}. 

\paragraph{About smooth controllability.} The negative controllability result \cite{T00} and the positive controllability results \cite{B05, BL10} proved on the same Schrödinger equation underline the importance of the regularity assumptions asked for the validity of controllability results, for a linear infinite dimensional equation. Nevertheless, even when the state lives within a finite dimensional space, Beauchard and Marbach proved in \cite{BM18} that the same nonlinear system, according to the functional setting, may or may not be small-time controllable. The same authors highlighted later the same phenomenon on a nonlinear infinite dimension parabolic equation in \cite{BM20}.

Moreover, for a controlled system that is already known to be controllable in a given setting, one can ask whether the control map preserves the smoothness of the data to be controlled. More precisely, if the data is smoother than expected, does the control constructed inherit from this smoothness? Generally, it is not the case. However, \cite{EZ10} gave a method to compute such controls. Such a question can be relevant to deal with nonlinear problems as in \cite{DL09} or to compute convergence rates for numerical approximation (see \cite{BFO20} for example).  

\paragraph{About moment problems.}
The use of infinite moment theory for linear control problems was introduced by the work of Fattorini and Russel (see \cite{FR71, FR75}). For classical results about Riesz basis and moment problems, the reader can refer, for example, to the following works: Krabs in \cite{K92},  Avdonin and  Ivanov in \cite{AI95}, Komornik and Loreti in \cite{KL05}, Haraux in \cite{H89}. In this paper, the solvability of a moment problem at any time with estimates in simultaneous Sobolev spaces is investigated. For the Schrödinger equation, it has already been done by Beauchard in \cite{B05} but only for a specific and not arbitrary small time. Moreover, the solvability of a moment problem in high-regularity spaces, but without simultaneous estimates, has been done in \cite{BL10} for Schrödinger, or in \cite{BM20} for a parabolic equation, relying on the work \cite{BBBO14}.

\section{Well-posedness of the Cauchy problem}
\label{well-posedness}
This section is dedicated to the proof of the existence, uniqueness and bounds on the solution of the Cauchy problem
\begin{equation}  
\label{Schro_source_term} 
\left\{
    \begin{array}{ll}
        i \partial_t \psi(t,x) = - \partial^2_x \psi(t,x) -u(t)\mu(x)\psi(t,x) -f(t,x),  \quad (t,x) \in (0,T) \times (0,1),\\
        \psi(t,0) = \psi(t,1)=0, \quad t \in (0,T), \\
        \psi(0,x) = \psi_0(x), \quad x \in (0,1).
    \end{array}
\right.  
\end{equation}
Our goal is to underline the link between the regularity of the solutions and the boundary conditions on the dipolar moment $\mu$ by proving the following statement. %
\begin{thm}
\label{thm:well-posedness}
Let $T>0$, $(p,k) \in \N^2$, $\mu \in H^{2(p+k)+3}( (0,1), \R)$ with $\mu^{(2n+1)}(0)=\mu^{(2n+1)}(1)=0$ for all $n=0, \ldots, p-1$ , $u \in H^{k}_0( (0,T), \R)$, $\psi_0 \in H^{2(p+k)+3}_{(0)}(0,1)$ and $f \in H^{k}_0 ( (0,T), H^{2p+3} \cap H^{2p+1}_{(0)}(0,1))$.
There exists a unique weak solution of the Schrödinger equation, that is a function $\psi \in C^{k}( [0,T], H^{2p+3}_{(0)}(0,1))$ such that the following equality holds in $H^{2p+3}_{(0)}$ for every $t \in [0,T]$:
\begin{equation*}
\psi(t) = e^{-iAt} \psi_0 + i \int_0^t e^{-iA (t- \tau)} \left( u(\tau) \mu \psi(\tau) + f(\tau) \right) d\tau.
\end{equation*}
Moreover, for every $R>0$, there exists $C=C(T, \mu, R)>0$ such that if $\| u \|_{H^k_0} < R$, then this solution satisfies
\begin{equation}
\label{estim_sol}
\| \psi \|_{C^k( [0,T], H^{2p+3}_{(0)})} 
\ioe 
C
\left(
\| \psi_0 \|_{H^{2(p+k)+3}_{(0)}} 
+
\| f \|_{H^k( (0,T), H^{2p+3} \cap H^{2p+1}_{(0)})}
\right).
\end{equation}
\end{thm}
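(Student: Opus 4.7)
The plan is to prove the theorem by induction on $k$, with a Banach fixed-point argument at the base and a time-differentiation argument at the inductive step. The indispensable preliminary is a \emph{multiplication lemma}: under the boundary hypothesis $\mu^{(2n+1)}(0)=\mu^{(2n+1)}(1)=0$ for $n=0,\dots,p-1$, the map $\psi \mapsto \mu \psi$ is continuous from $H^{2p+1}_{(0)}$ into $H^{2p+3}\cap H^{2p+1}_{(0)}$. This follows from Leibniz's rule: the boundary trace of $(\mu\psi)^{(2j)}$ for $j \le p+1$ involves only odd derivatives $\mu^{(2i+1)}$ with $i \le p-1$ against odd derivatives of $\psi$ (since even derivatives of $\psi$ vanish by $\psi \in H^{2p+1}_{(0)}$), and these are killed by the hypothesis — except at the top level $j=p+1$, which is precisely why the image lives in $H^{2p+3}\cap H^{2p+1}_{(0)}$ rather than the full $H^{2p+3}_{(0)}$.

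For the base case $k=0$, I would apply Banach's fixed-point theorem to
\[
\Phi(\psi)(t) := e^{-iAt}\psi_0 + i\int_0^t e^{-iA(t-\tau)}\bigl(u(\tau)\mu\psi(\tau) + f(\tau)\bigr)\,d\tau
\]
in a ball of $C^0([0,T_*], H^{2p+3}_{(0)})$ for $T_*$ small enough. The free term and the $f$-contribution sit in the desired space since $e^{-iAt}$ is unitary on $H^s_{(0)}$. The critical bilinear term is where the \emph{hidden regularizing effect} of \cite{BL10} must be deployed in the high-regularity setting: expanding in the eigenbasis and integrating by parts in time (writing $u = \dot u_1$) produces a factor $1/\lambda_j \sim 1/j^2$ that compensates for the extra two derivatives and allows the time integral of $u\mu\psi$ to land in $C^0(H^{2p+3}_{(0)})$ with the linear estimate $\|\Phi(\psi)\|_{C^0 H^{2p+3}_{(0)}} \lesssim T_*^{1/2}\|u\|_{L^2}\|\mu\|_{H^{2p+3}}\|\psi\|_{C^0 H^{2p+3}_{(0)}}$. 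A standard concatenation argument then extends the local solution to $[0,T]$, and uniqueness follows from the same contraction.

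For the inductive step, assuming the theorem at level $k-1$, I would formally set $\tilde\psi := \partial_t\psi$, which solves the same Schrödinger equation with control $u$, initial datum $\tilde\psi(0)=-iA\psi_0$ (using $u(0)=0$ and $f(0)=0$, valid since $u \in H^k_0$, $f\in H^k_0$ with $k\ge 1$), and source $\tilde f := \dot u\, \mu\psi + \partial_t f$. Observe that $-iA\psi_0 \in H^{2(p+k-1)+3}_{(0)}$, that $\dot u \in H^{k-1}_0$, and that $\dot u\,\mu\psi + \partial_t f \in H^{k-1}_0((0,T), H^{2p+3}\cap H^{2p+1}_{(0)})$ by the multiplication lemma applied to $\psi \in C^0(H^{2p+3}_{(0)})$ (known from step 2) together with the vanishing of sufficiently many endpoint derivatives of $u$ and $f$. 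The inductive hypothesis then yields $\tilde\psi \in C^{k-1}([0,T], H^{2p+3}_{(0)})$, hence $\psi \in C^k([0,T], H^{2p+3}_{(0)})$, and the estimate \eqref{estim_sol} is obtained by tracking the constants. The principal obstacle is the base-case hidden regularizing estimate in its sharpest form — the integration-by-parts Fourier computation must be done uniformly in $t$ and carefully handle the partial boundary vanishing of $\mu$ so that the correct factor of $1/j^2$ appears to absorb the two spatial derivatives lost in the multiplication lemma.
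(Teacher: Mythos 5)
Your overall architecture — a multiplication lemma for $\psi \mapsto \mu\psi$, a fixed-point base case, and time-differentiation to bootstrap — is reasonable, and the multiplication lemma is essentially correct (note a slip: it should map $H^{2p+3}_{(0)}$, not $H^{2p+1}_{(0)}$, into $H^{2p+3}\cap H^{2p+1}_{(0)}$; multiplication cannot gain two derivatives). However, there is a genuine gap in the base case: the smoothing mechanism you describe does not work. You propose to recover $H^{2p+3}_{(0)}$-regularity of $t\mapsto\int_0^t e^{-iA(t-\tau)}\bigl(u\,\mu\psi+f\bigr)(\tau)\,d\tau$ by \emph{integrating by parts in time}, writing $u=\dot u_1$ so as to gain a factor $1/\lambda_j$. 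For $k=0$ this fails on two counts. First, $u\in L^2(0,T)$ has no pointwise values, so the boundary term of the time IBP is ill-defined; and if instead one moves the $\partial_\tau$ from $u_1$ onto $e^{i\lambda_j\tau}\langle\mu\psi(\tau),\varphi_j\rangle$, one \emph{produces} a factor $\lambda_j$ rather than $1/\lambda_j$. Second, the source $f$ is independent of $u$, so the device ``$u=\dot u_1$'' offers nothing there, yet $f$ also needs the same smoothing. The paper's actual mechanism (\cref{estim_G_C0}) is \emph{integration by parts in space}: one writes $\langle f(\tau),\varphi_j\rangle$ after $2p+3$ spatial IBPs as boundary traces $\partial_x^{2p+2}f(\tau,0),\,\partial_x^{2p+2}f(\tau,1)$ times $(j\pi)^{-(2p+3)}$ plus an $L^2$-interior remainder; the boundary terms, which carry the entire regularity deficit, are then absorbed by an Ingham inequality for $\bigl(\int_0^t \partial_x^{2p+2}f(\tau,x_0)e^{i\lambda_j\tau}\,d\tau\bigr)_j$. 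No time derivative of $u$ or $f$ is ever taken, which is essential since $u\in L^2$, $f\in L^2(\cdot)$ at the lowest level.

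A secondary, more fixable issue is the inductive step. To apply level $k-1$ to $\tilde\psi=\partial_t\psi$ you need $\tilde f=\dot u\,\mu\psi+\partial_t f\in H^{k-1}_0\bigl((0,T),H^{2p+3}\cap H^{2p+1}_{(0)}\bigr)$; knowing only $\psi\in C^0(H^{2p+3}_{(0)})$ is not enough to give $\mu\psi$ the $k-1$ time derivatives required. One must bootstrap one order at a time (apply level $0$ to $\tilde\psi$ to get $\psi\in C^1$, then level $1$, etc.), carefully checking the vanishing at $t=0,T$ of $\dot u$ and $\partial_t f$ at each step. This bookkeeping is doable, but note that the paper avoids it entirely: \cref{estim_G_Ck} upgrades the smoothing lemma directly to $C^k$-regularity, after which a single Banach fixed point in $C^k([0,T],H^{2p+3}_{(0)})$ proves the theorem without an outer induction on $k$.
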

We will sometimes write $\psi( \cdot ; \ u, \psi_0)$ to denote the solution of \eqref{Schro_source_term} associated with control $u$ and initial data $\psi_0$ when we will need to recall the dependence with respect to the control or the initial condition. For $p=0$, no boundary conditions are needed on $\mu$. The proof of \cref{thm:well-posedness} is inspired by \cite[Proposition 2 and 5]{BL10}, where the authors dealt with the cases $(p=0, k=0)$ and $(p=0, k=1)$.

\begin{rem}
\label{rem:point_final}
Let $T$, $p$, $k$, $\mu$, $u$, $\psi_0$ and $f$ as in \cref{thm:well-posedness}. Notice that as both the control and the source term vanish at the final time, the solution $\psi \in C^k( [0,T], H^{2p+3}_{(0)})$ of \eqref{Schro_source_term} satisfies the following equality in $H^{2p+3}_{(0)}(0,1)$
\begin{equation*}
i \partial_t^k \psi(T) = A^k \psi(T).
\end{equation*}
Therefore,  
\begin{equation*}
\psi(T) \in H^{2(p+k)+3}_{(0)}(0,1).
\end{equation*}
However, in general, the solution does not belong to $ C^0( [0,T], H^{2(p+k)+3}_{(0)})$, as $\psi(t)$ does not belong to $H^{2(p+k)+3}_{(0)}(0,1)$ if $u(t)\neq 0$.  Moreover, from \eqref{estim_sol}, one deduces that for every $R>0$, there exists $C=C(T, \mu, R)>0$ such that if $\| u \|_{H^k_0} < R$, then
\begin{equation}
\label{estim_sol_fin}
\| \psi(T) \|_{H^{2(p+k)+3}_{(0)}}
\ioe
C
\left(
\| \psi_0 \|_{H^{2(p+k)+3}_{(0)}} 
+
\| f \|_{H^k( (0,T), H^{2p+3} \cap H^{2p+1}_{(0)})}
\right).
\end{equation}
\end{rem}

\subsection{Smoothing effect}
The main difficulty in this well-posedness result relies on the fact that, for a given $\tau$ in $[0, T]$, $f(\tau)$ is not assumed to belong to $H^{2p+3}_{(0)}$ and moreover, the operator $\psi \mapsto \mu \psi$ is not bounded from $H^{2p+3}_{(0)}$ to $H^{2p+3}_{(0)}$ because $\mu^{(2p+1)}$ does not vanish at $x=0$ and $x=1$. Therefore, the proof of \cref{thm:well-posedness} stems from the regularity of the function $t \mapsto \int_0^t e^{iA\tau} f(\tau) d\tau$ in the spatial space $H^{2p+3}_{(0)}$ even when $f$ does not take values in such space. In the following proposition, for $p=-1$, we will write $H^{2p+3} \cap H^{2p+1}_{(0)}$ to denote only the space $H^1$ to homogenize with the notation when $p \in \N$. 
\begin{prop}
\label{estim_G_C0}
Let $p=-1$ or $p \in \N$. There exists a nondecreasing function $C: [0, +\infty) \rightarrow (0, +\infty)$ such that for all $T \soe0$  and for all $f \in L^2((0,T), H^{2p+3} \cap H^{2p+1}_{(0)}(0,1))$, the function $G : t \mapsto \int_0^t e^{-iA(t-\tau)} f(\tau) d\tau$ belongs to $C^0( [0,T], H^{2p+3}_{(0)}(0,1))$ with the following estimate, 
\begin{equation}
\| G \|_{C^0( [0,T], H^{2p+3}_{(0)})} \ioe C(T) \| f \|_{L^2((0,T), H^{2p+3} \cap H^{2p+1}_{(0)})}. 
\end{equation}
\end{prop}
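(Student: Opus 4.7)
The strategy is to expand $G(t) = \sum_{j \in \N^*} I_j(t) \varphi_j$ in the Dirichlet eigenbasis with
$$I_j(t) := e^{-i\lambda_j t} \int_0^t e^{i\lambda_j \tau} \langle f(\tau), \varphi_j\rangle d\tau,$$
and to estimate $\|G(t)\|_{H^{2p+3}_{(0)}}^2 = \sum_j j^{2(2p+3)} |I_j(t)|^2$. For $p \geq 0$, the hypothesis $f(\tau) \in H^{2p+1}_{(0)}$ gives $f, Af, \ldots, A^p f \in H^1_0$, so $p+1$ successive spatial integrations by parts against $\varphi_j'' = -\lambda_j \varphi_j$ yield
$$\langle f(\tau), \varphi_j\rangle = \lambda_j^{-(p+1)} \langle A^{p+1} f(\tau), \varphi_j\rangle,$$
where $g(\tau) := A^{p+1} f(\tau)$ belongs to $H^1(0,1)$, but \emph{not} necessarily to $H^1_0$. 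Since $j^{2(2p+3)} \lambda_j^{-2(p+1)} = \pi^{-4(p+1)} j^2$ and $\|g\|_{L^2(H^1)} \leq C\|f\|_{L^2(H^{2p+3})}$, the general case reduces to the base case $p = -1$: for every $g \in L^2((0,T), H^1(0,1))$, the function $\tilde G(t) := \int_0^t e^{-iA(t-\tau)} g(\tau) d\tau$ belongs to $H^1_{(0)}$ with $\|\tilde G(t)\|_{H^1_{(0)}}^2 \leq C(T) \|g\|_{L^2(H^1)}^2$.

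To prove this base case, one spatial integration by parts in $\langle g(\tau), \varphi_j\rangle$ exposes the boundary traces:
$$j \langle g(\tau), \varphi_j\rangle = \frac{\sqrt 2}{\pi}\bigl[g(\tau, 0) - (-1)^j g(\tau, 1)\bigr] + \frac{\sqrt 2}{\pi} \int_0^1 \partial_x g(\tau, x) \cos(j \pi x) dx.$$
This decomposes $j \tilde I_j(t) = B_j(t) + R_j(t)$ into a trace contribution and a bulk contribution. For $R_j$, Parseval applied to the orthonormal cosine family $\{\sqrt 2 \cos(j\pi \cdot)\}_{j \geq 1}$ controls $\sum_j \bigl|\int_0^1 \partial_x g(\tau, x)\cos(j\pi x) dx\bigr|^2$ by $\|g(\tau)\|_{H^1}^2$, and Cauchy--Schwarz in $\tau$ produces $\sum_j |R_j(t)|^2 \leq C T \|g\|_{L^2(H^1)}^2$. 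The delicate term is $B_j(t)$, since the traces $g(\tau, 0)$ and $g(\tau, 1)$ do not decay in $j$; the only source of summability in $j$ is the oscillation of $e^{i\lambda_j \tau}$. The key tool will be a Bessel-type inequality for the family $\{e^{i\lambda_j \cdot}\}_{j \in \N^*}$ on $L^2(0,T)$, available since the gap $\lambda_{j+1} - \lambda_j = (2j+1)\pi^2 \geq 3\pi^2$ is uniformly bounded below:
$$\sum_{j \in \N^*}\Bigl|\int_0^T e^{i\lambda_j \tau} h(\tau) d\tau\Bigr|^2 \leq C(T) \|h\|_{L^2(0,T)}^2, \quad h \in L^2(0,T),$$
with $T \mapsto C(T)$ nondecreasing (extension by zero). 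Applied to $h = g(\cdot, 0)\mathbf{1}_{[0,t]}$ and $h = g(\cdot, 1)\mathbf{1}_{[0,t]}$, together with the trace bound $\|g(\cdot, \xi)\|_{L^2(0,T)} \leq C \|g\|_{L^2(H^1)}$ (from $H^1(0,1) \hookrightarrow C^0([0,1])$), this produces $\sum_j |B_j(t)|^2 \leq C(T) \|g\|_{L^2(H^1)}^2$, completing the pointwise estimate.

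It remains to establish continuity of $G$ in $t$ with values in $H^{2p+3}_{(0)}$. Since the pointwise estimate places $G(s) \in H^{2p+3}_{(0)}$ for every $s$, I would write, for $s \leq t$,
$$G(t) - G(s) = \bigl(e^{-iA(t-s)} - \Id\bigr) G(s) + \int_s^t e^{-iA(t-\tau)} f(\tau) d\tau.$$
The first term tends to $0$ in $H^{2p+3}_{(0)}$ by strong continuity of $(e^{-iAt})_{t \in \R}$ on this space (which follows from dominated convergence in the Fourier expansion). The second is itself a Duhamel integral on $[s,t]$, and the pointwise estimate applied to $f \mathbf{1}_{[s,t]}$ yields a bound going to $0$ as $|t - s| \to 0$ by absolute continuity of the integral. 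The main obstacle is the analysis of $B_j(t)$: the oscillatory Bessel inequality is precisely what compensates for the lack of Dirichlet traces on $g$, and is the technical heart of the "smoothing effect" announced by the proposition.
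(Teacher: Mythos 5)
Your proof is correct and follows essentially the same route as the paper: perform enough spatial integrations by parts to expose a trace contribution plus a bulk cosine-series contribution, control the bulk by Parseval together with Cauchy--Schwarz in time, and control the trace term by the Ingham/Bessel inequality for the family $\{e^{i\lambda_j \cdot}\}$. The only presentational difference is that you first reduce to the case $p=-1$ via $p+1$ boundary-term-free integrations by parts (using $f(\tau)\in H^{2p+1}_{(0)}$) before extracting the traces, whereas the paper performs all $2p+3$ integrations by parts at once; you also spell out the continuity step (semigroup strong continuity plus absolute continuity of the integral) that the paper only sketches.
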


\begin{proof}
Let $T \soe 0$ and $f \in L^2((0,T), H^{2p+3} \cap H^{2p+1}_{(0)})$. Let $t \in [0,T]$. By definition of the function $G$ and of the norm $H^{2p+3}_{(0)}$, one seeks to estimate, 
\begin{equation*}
\| G(t) \|_{H^{2p+3}_{(0)}}  
= 
\left\|
\sum \limits_{j=1}^{+\infty} 
\left( 
\int_0^t
\langle 
f(\tau), 
\varphi_j 
\rangle 
e^{-i \lambda_j (t-\tau)}
d\tau
\right) 
\varphi_j
\right\|_{H^{2p+3}_{(0)}}  
=
\left\| 
\left(
\int_0^t \langle 
f(\tau), 
\varphi_j 
\rangle 
e^{i \lambda_j \tau}
d\tau 
\right)_{j \in \N^*}
\right\|_{h^{2p+3}}.
\end{equation*}
Yet, for almost every $\tau \in (0,T)$, $f(\tau)$ belongs to $H^{2p+3} \cap H^{2p+1}_{(0)}$. Therefore, performing $(2p+3)$-integrations by parts, we get, for all $j \in \N^*$, 
\begin{equation*}
\langle f(\tau), \varphi_j \rangle 
=
\frac{\sqrt{2}}{(j \pi)^{2p+3}} 
\left( 
(-1)^j \partial_x^{2p+2} f(\tau,1) 
-
\partial_x^{2p+2} f(\tau,0) 
\right)  
-
\frac{1}{(j \pi)^{2p+3}} 
\langle \partial_x^{2p+3} f(\tau), \sqrt{2} \cos(j \pi \cdot) \rangle,
\end{equation*}
(with a minus added on each term if $p=-1$).
Thus, there exists a constant $C>0$, not depending on time, such that
\begin{multline}
\label{estim_G(t)}
\| G(t) \|_{H^{2p+3}_{(0)}} 
\ioe 
C 
\sum 
\limits_{x_0 \in \{0,1\}}
\left\| 
\left(
\int_0^t 
\partial_x^{2p+2} f(\tau,x_0) 
e^{i \lambda_j \tau}
d\tau
\right)_{j \in \N^*}
\right\|_{l^2} 
\\+
C
\left\| 
\left(
\int_0^t \langle 
\partial_x^{2p+3} f(\tau), 
\sqrt{2} \cos(j \pi \cdot)
\rangle 
e^{i \lambda_j \tau}
d\tau 
\right)_{j \in \N^*}
\right\|_{l^2} 
.
\end{multline}
Using the Cauchy-Schwarz inequality (in time) and then the orthonormality of the family $(\sqrt{2} \cos(j \pi \cdot))_{j \in \N}$ in $L^2(0,1)$, the square of the last term of the right-hand side of \eqref{estim_G(t)} is estimated by
\begin{align*}
\sum \limits_{j=1}^{+\infty}
\left|
\int_0^t \langle 
\partial_x^{2p+3} f(\tau), 
\sqrt{2} \cos(j \pi \cdot)
\rangle 
e^{i \lambda_j \tau}
d\tau 
\right|^2
&\ioe 
\sum \limits_{j=1}^{+\infty}
t
\int_0^t 
\left|
\langle 
\partial_x^{2p+3} f(\tau), 
\sqrt{2} \cos(j \pi \cdot)
\rangle 
\right|^2
d\tau 
\\
&\ioe 
t
\int_0^t 
\left\|
\partial_x^{2p+3} f(\tau)
\right\|_{L^2}^2 
d\tau
,
\end{align*}
giving that 
\begin{equation}
\label{eq:G(t)_intern}
\left\| 
\left(
\int_0^t \langle 
\partial_x^{2p+3} f(\tau), 
\sqrt{2} \cos(j \pi \cdot)
\rangle 
e^{i \lambda_j \tau}
d\tau 
\right)_{j \in \N^*}
\right\|_{l^2} 
\ioe
\sqrt{t} 
\| f \|_{L^2( (0,t), H^{2p+3})}.
\end{equation}
Moreover, the sum in the right-hand side of \eqref{estim_G(t)} is estimated using an Ingham inequality (see for example \cite[Appendix B, Corollary 4]{BL10}) which gives the existence of a nondecreasing function $C: t \mapsto C(t)>0$ such that, 
\begin{equation}
\label{eq:G(t)_bc}
\left\| 
\left(
\int_0^t 
\partial_x^{2p+2} f(\tau,x_0) 
e^{i \lambda_j \tau}
d\tau 
\right)_{j \in \N^*}
\right\|_{l^2} 
\ioe C(t)
 \| \partial_x^{2p+2} f( \cdot ,x_0) \|_{L^2(0,t)}, \quad \text{ for } x_0=0 \text{ and } 1.
\end{equation}
Therefore, \eqref{estim_G(t)}, \eqref{eq:G(t)_intern} and \eqref{eq:G(t)_bc} together with the fact that $H^{2p+3}(0,1)$ is continuously embedded in $C^{2p+2}([0,1])$ give
\begin{equation*}
\| G(t) \|_{H^{2p+3}_{(0)}} \ioe C(t) \| f \|_{L^2((0,t), H^{2p+3})}, 
\end{equation*}
with a nondecreasing function $t \mapsto C(t)>0$. This bound shows that $G(t)$ belongs to $H^{2p+3}_{(0)}(0,1)$ for every $t \in [0,T]$ and that the map $t \mapsto G(t) \in H^{2p+3}_{(0)}$ is continuous at $t=0$ (as $C(t)$ is uniformly bounded when $t \rightarrow 0$ and $\| f \|_{L^2((0,t), H^{2p+3})} \rightarrow 0$ when $t \rightarrow 0$ thanks to the dominated convergence theorem). The continuity of $G$ at any time $t \in (0,T]$ can be proved similarly. 
\end{proof}
The previous lemma stated the continuity of $t \mapsto \int_0^t e^{iA \tau} f(\tau) d\tau$ and from this we can deduce, for all $k \in \N^*$, the $C^k$-regularity of such function.
\begin{prop}
\label{estim_G_Ck}
Let $(p, k) \in \N^2$. There exists a nondecreasing function $C : [0, +\infty) \rightarrow (0, +\infty)$ such that for all $T \soe 0$ and for all $f \in H^{k}_0((0,T), H^{2p+3} \cap H^{2p+1}_{(0)}(0,1))$, the function $G: t \mapsto \int_0^t e^{-iA(t-\tau)} f(\tau) d\tau$ belongs to $C^k( [0,T], H^{2p+3}_{(0)}(0,1))$ with the following estimate,  
\begin{equation}
\| G \|_{C^k( [0,T], H^{2p+3}_{(0)})} \ioe C \| f \|_{H^k((0,T), H^{2p+3} \cap H^{2p+1}_{(0)})}.
\end{equation}
\end{prop}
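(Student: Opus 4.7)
The plan is to reduce the $C^k$-regularity of $G$ to the $C^0$-regularity established in \cref{estim_G_C0}, by proving that for every $j = 0, 1, \ldots, k$ one has the identity
\begin{equation*}
G^{(j)}(t) = \int_0^t e^{-iA(t-\tau)} f^{(j)}(\tau)\,d\tau, \quad t \in [0,T],
\end{equation*}
and then applying \cref{estim_G_C0} to each $f^{(j)}$, which lies in $L^2((0,T), H^{2p+3} \cap H^{2p+1}_{(0)})$ since $f \in H^k_0$.

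The key step is the following lemma: if $g \in H^1_0((0,T), H^{2p+3} \cap H^{2p+1}_{(0)})$ and we write $(\mathcal{T}g)(t) := \int_0^t e^{-iA(t-\tau)} g(\tau)\,d\tau$, then $(\mathcal{T}g)' = \mathcal{T}(g')$. To prove this, I would perform the substitution $\sigma = t-\tau$ to rewrite
\begin{equation*}
(\mathcal{T}g)(t) = \int_0^t e^{-iA\sigma} g(t-\sigma)\,d\sigma,
\end{equation*}
and differentiate in $t$. The boundary contribution $e^{-iAt} g(0)$ vanishes because $g \in H^1_0$ forces $g(0)=0$ (via the Sobolev embedding $H^1 \hookrightarrow C^0$ in time with values in a Banach space), while the interior term $\int_0^t e^{-iA\sigma} g'(t-\sigma)\,d\sigma$ equals $\mathcal{T}(g')(t)$ after reversing the change of variables. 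Iterating this identity $k$ times, at each step consuming one vanishing trace $f^{(j)}(0)=0$ guaranteed by $f \in H^k_0$, one obtains $G^{(j)} = \mathcal{T}(f^{(j)})$ for every $j = 0, \ldots, k$. Applying \cref{estim_G_C0} term by term and summing yields
\begin{equation*}
\sum_{j=0}^k \|G^{(j)}\|_{C^0([0,T], H^{2p+3}_{(0)})} \ioe C(T)\,\|f\|_{H^k((0,T), H^{2p+3} \cap H^{2p+1}_{(0)})},
\end{equation*}
which is the claimed estimate, with $C$ nondecreasing in $T$ inherited from \cref{estim_G_C0}.

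The main subtlety that this argument must navigate is that the naive differentiation under the integral sign, which would give $G'(t) = f(t) - iAG(t)$, is useless here: the operator $A$ maps $H^{2p+3}_{(0)}$ into $H^{2p+1}_{(0)}$ (and not back to itself), so $AG(t)$ is not controlled in $H^{2p+3}_{(0)}$ and the induction cannot close at the desired level of spatial regularity. The change-of-variables trick circumvents exactly this difficulty by transferring the time derivative from the propagator $e^{-iA(t-\tau)}$ onto the source $f$, and this is made possible precisely by the vanishing boundary traces built into $H^k_0$, which annihilate the boundary contributions that would otherwise appear at each differentiation.
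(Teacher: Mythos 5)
Your plan follows the same route as the paper: write $G(t) = \int_0^t e^{-iA\sigma} f(t-\sigma)\,d\sigma$, use the vanishing traces built into $f \in H^k_0$ to shift time derivatives from the propagator onto $f$, obtain $G^{(j)}(t) = \int_0^t e^{-iA(t-\tau)} f^{(j)}(\tau)\,d\tau$ for $j \leq k$, and invoke \cref{estim_G_C0} for each $f^{(j)}$. You also correctly explain why the naive route $G' = f - iAG$ loses spatial regularity and cannot close the induction.

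However, there is a genuine gap: the identity $(\mathcal{T}g)' = \mathcal{T}(g')$ is justified only by ``differentiating in $t$,'' without saying in which topology the derivative is computed or why the difference quotients converge in $H^{2p+3}_{(0)}$. This is exactly the hard part, because the integrand $e^{-iA\sigma} g(t-\sigma)$ only takes values in $H^{2p+1}_{(0)}$ (not $H^{2p+3}_{(0)}$), so differentiation under the integral sign is not available in the target topology; it is only the integral itself that gains regularity, through \cref{estim_G_C0}. What the change of variables and classical semigroup theory do give you is the identity $G^{(n)}(t) = \int_0^t e^{-iA\tau} f^{(n)}(t-\tau)\,d\tau$ as an equality in the weaker space $H^{2p+2}_{(0)}$, where $f(\tau)$ does take its values. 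Upgrading this to differentiability in $H^{2p+3}_{(0)}$ is the content of the paper's Step~2, which writes the difference quotient as
\begin{equation*}
\frac{G^{(n)}(t+h)-G^{(n)}(t)}{h} - G^{(n+1)}(t) = \frac{1}{h}\int_t^{t+h} e^{-iA\tau} f^{(n)}(t+h-\tau)\,d\tau + \int_0^t e^{-iA\tau}\left(\frac{f^{(n)}(t+h-\tau)-f^{(n)}(t-\tau)}{h}-f^{(n+1)}(t-\tau)\right)d\tau
\end{equation*}
and shows both terms tend to zero in $H^{2p+3}_{(0)}$ as $h\to 0$ by applying \cref{estim_G_C0} once more (the first term uses $f^{(n)}(0)=0$ in an essential way). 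Knowing that the candidate derivative $\mathcal{T}(f^{(n+1)})$ is continuous with values in $H^{2p+3}_{(0)}$ does not, by itself, imply that $G^{(n)}$ is differentiable there with that derivative; that verification is where most of the analytic work of the proof lies, and your outline stops just short of it.
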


\begin{proof}
Let $f \in H^k_0((0,T), H^{2p+3} \cap H^{2p+1}_{(0)})$. We will rather work with $G$ written under the form
\begin{equation*}
G(t) =  \int_0^t e^{-iA \tau} f(t - \tau) d\tau, \quad t \in [0,T].  
\end{equation*}
\noindent \textit{Step 1: Classical regularity.}
As $f$ is in $H^k((0,T), H^{2p+2}_{(0)})$, the classical theory on semi-groups gives that $G$ is in $C^k( [0,T], H^{2p+2}_{(0)}).$ Moreover, because $f(0)=\ldots=f^{(k-1)}(0)=0$, the derivatives, for the $H^{2p+2}_{(0)}$-topology, are given by,
\begin{equation}
\label{expr_der_G}
\forall n=0, \ldots, k, \ G^{(n)}(t) = \int_0^t e^{-iA\tau} f^{(n)}(t-\tau) d\tau, \quad t \in [0,T].
\end{equation}
\noindent \textit{Step 2: Higher regularity in space.}
We prove that $G$ is in $C^n( [0,T], H^{2p+3}_{(0)})$ by induction on $n \in \{0, \ldots k\}$. The initialization ($n=0$) is proved in \cref{estim_G_C0}. Let $n \in \{0, \ldots k-1\}$ and assume that $G$ is in $C^n( [0,T], H^{2p+3}_{(0)})$.  First, as $f^{(n+1)}$ is in $L^2((0,T),  H^{2p+3} \cap H^{2p+1}_{(0)})$ (as $n+1\leq k$), \cref{estim_G_C0} and \eqref{expr_der_G} give directly that $G^{(n+1)}$ is in $C^0( [0,T], H^{2p+3}_{(0)})$.  Then, for $t \in [0,T]$, 
%
with \eqref{expr_der_G}, one can write, 
\begin{multline}
\label{eq:taux_accroissement}
\frac{G^{(n)}(t+h)-G^{(n)}(t)}{h}  -  G^{(n+1)}(t)
 =
\frac{1}{h} 
\int_t^{t+h} 
e^{-iA \tau} 
f^{(n)}(t+h- \tau) 
d\tau 
\\ +
\int_0^t
e^{-iA \tau} 
\left(
\frac{f^{(n)}(t+h- \tau)-f^{(n)}(t-\tau)}{h}-f^{(n+1)}(t-\tau)
\right)
d\tau.
\end{multline}
By \cref{estim_G_C0}, the $H^{2p+3}_{(0)}$-norm of the second term of the right-hand side of \eqref{eq:taux_accroissement} is bounded by 
\begin{equation*}
c_1(T)
\left\| 
\frac{f^{(n)}( \cdot +h)-f^{(n)}}{h}
-
f^{(n+1)}
\right\|_{L^2( (0,T), H^{2p+3} \cap H^{2p+1}_{(0)})}
\end{equation*}
which goes to zero as $h$ goes to zero, because $f^{(n)}$ is in $H^1((0,T), H^{2p+3} \cap H^{2p+1}_{(0)})$. Besides, as $f^{(n)}(0)=0$,  using successively a change of variables, that $e^{-iA}$ is an isometry from $H^{2p+3}_{(0)}$ to $H^{2p+3}_{(0)}$, \cref{estim_G_C0} and Cauchy-Schwarz inequality, one gets the following upper bound for the $H^{2p+3}_{(0)}$-norm of the first term of the right-hand side of \eqref{eq:taux_accroissement}
\begin{align*}
\Big\| 
&\frac{e^{iA(t+h)}}{h} 
\int_0^{h} 
e^{-iAs} 
\left(
f^{(n)}(s) - f^{(n)}(0)
\right)
ds
\Big\|_{H^{2p+3}_{(0)}}
\ioe 
c_1(h)
\left\|
\frac{f^{(n)}( \cdot) - f^{(n)}(0)}{h}
\right\|_{L^2( (0,h), H^{2p+3} \cap H^{2p+1}_{(0)})}
\\
&= \frac{c_1(h)}{h} 
\left\|
\int_0^{\cdot}
\partial_t f^{(n)}(\tau) d\tau
\right\|_{L^2( (0,h), H^{2p+3} \cap H^{2p+1}_{(0)})}
\ioe c_1(h) 
\left\|
\partial_t f^{(n)}
\right\|_{L^{2}( (0,h), H^{2p+3} \cap H^{2p+1}_{(0)})},
\end{align*}
This bound goes to zero when $h$ goes to zero as the function $h \mapsto c_1(h)$ given in \cref{estim_G_C0} is nondecreasing and 
$\lim \limits_{h \rightarrow0}
\left\|
\partial_t f^{(n)}
\right\|_{L^{2}( (0,h), H^{2p+3} \cap H^{2p+1}_{(0)})}=0
$
by the dominated convergence theorem. And this concludes the proof. 
\end{proof}

\subsection{Proof of \cref{thm:well-posedness}, the well-posedness}
Let $\mu$, $\psi_0$, $f$ and $u$ satisfying the hypotheses of \cref{thm:well-posedness}. We consider the map 
\begin{equation*}
\begin{array}{ccrcl}
F & : & C^k ( [0,T], H^{2p+3}_{(0)}(0,1) )  & \to & C^k ( [0,T], H^{2p+3}_{(0)}(0,1)) \\
 & & \psi & \mapsto & \xi, \\
\end{array}
\end{equation*}
where 
\begin{equation*}
\xi(t) := e^{-iAt } \psi_0 + i\int_0^t e^{-iA (t-\tau)} \left( u(\tau) \mu \psi(\tau) + f(\tau) \right) d\tau, \quad t \in [0,T], 
\end{equation*}
so that $\psi$ is a weak solution of \eqref{Schro_source_term} if and only if $\psi$ is a fixed-point of $F$. 

\medskip \noindent \textit{$F$ is well-defined.}
For $\psi \in C^k ( [0,T], H^{2p+3}_{(0)})$ and $\tau \in [0,T]$, by Leibniz formula and the algebra structure of $H^{2p+3}$, the map $\tau \mapsto  u(\tau) \mu \psi(\tau) + f(\tau)$ belongs to $ H^k_0( (0,T), H^{2p+3} \cap H^{2p+1}_{(0)})$ and thus, by \cref{estim_G_Ck}, $\xi=F(\psi)$ is in $C^k ( [0,T], H^{2p+3}_{(0)})$. 

\medskip \noindent \textit{$F$ is a contraction.}
Let $\psi, \hat{\psi}$ in $C^k ( [0,T], H^{2p+3}_{(0)})$. Again, by \cref{estim_G_Ck} and the algebra structure of $H^{2p+3}(0,1)$, we get, for all $t \in [0,T]$, 
\begin{equation}
\label{F_contraction}
\| F(\psi)(t) - F(\hat{\psi})(t) \|_{H^{2p+3}_{(0)}}
\ioe C(T)
\| u \|_{H^k} \| \mu \|_{H^{2p+3}} \| \psi - \hat{\psi} \|_{C^k ( [0,T], H^{2p+3})}. 
\end{equation}
Equation \eqref{F_contraction} proves that if $\| u \|_{H^k}$ is small enough, $F$ is a contraction and thus by the Banach fixed-point theorem, admits a unique fixed-point $\psi$ in $C^k ( [0,T], H^{2p+3}_{(0)})$. Computing the same estimates, by \cref{estim_G_Ck}, we get that this fixed point satisfies
\begin{multline*}
\| \psi\|_{C^k ( [0,T], H^{2p+3}_{(0)} ) }
\ioe 
C(T, \mu) 
\Big(
\| \psi_0\|_{H^{2(p+k)+3}_{(0)}}
 +
\| u \|_{H^k}
\| \psi\|_{C^k ( [0,T], H^{2p+3}_{(0)} ) }
\\ +
\| f\|_{H^k( (0,T), H^{2p+3} \cap H^{2p+1}_{(0)})}
\Big).
\end{multline*}
Therefore, for $u \in H^k_0(0,T)$ such that $C(T, \mu) \| u \|_{H^k} \ioe 1/2$, we get \eqref{estim_sol}. If $u$ in $B_{H^k(0,T)}(0,R)$ is not small enough in $H^k_0$, one can consider a subdivision $0 = T_0 < \ldots < T_N=T$ such that for all $i \in \{0, \ldots, N-1\}$,  $\| u \|_{H^k(T_i,T_{i+1})}$ is small enough to apply the previous argument on $[T_i,T_{i+1}]$. Notice that as the constant $T \mapsto C(T)$ is nondecreasing, $N$ only depends on $R$ so that the constant in \eqref{estim_sol} does only depend on $T$, $\mu$ and $R$, as claimed in the theorem.

\section{Solvability of a moment problem with simultaneous estimates}
\label{subsection:moment}
The goal of this section is the proof of a new result about trigonometric moment problems. More precisely, the aim is to prove the solvability of a moment problem in high-regularity spaces, with simultaneous estimates on the operator solving the moment problem. This will allow to build a control map for the linearized system with estimates in various control/data spaces in the following section. 

\subsection{Assumptions on the frequencies}
Given an increasing sequence $\omega=(\omega_j)_{j \in \N}$ of $[0,+\infty)$ with $\omega_0=0$, we define, for all $m \in \N$,
\begin{equation*} 
h^{2m}_{\omega,r}(\N, \C) := \Big\{ (d_j)_{j \in \N} \in l^2(\N, \C) ; \ d_0 \in \R \text{ and } \ \| d \|^2_{h^{2m}_{\omega}}:=\sum \limits_{j=0}^{+\infty} \left| \left(\delta_{j,0}+ \omega_j^{m} \right) d_j \right|^2 < + \infty \Big\}.
\end{equation*}
When $m=0$, we will simply write $l_r^2$ instead of $h^0_{\omega, r}$. Moreover, we will say that a sequence $(\omega_j)_{j \in \N}$ satisfies an asymptotic gap if
\begin{equation*}
\tag{AsymptGap}
\label{AsymptGap}
\omega_{j+1} - \omega_j \rightarrow +\infty \text{ when } j \rightarrow +\infty, 
\end{equation*}
and satisfies a polynomial asymptotic gap if
\begin{equation*}
\tag{AsymptGapPoly}
\label{AsymptGapPoly}
\text{there exists } \eps>0, N_0 \in \N \text{ and } c>0 \text{ s.t.\ for all } j\soe N_0, \quad \omega_{j+1} - \omega_j \soe cj^{\eps}. 
\end{equation*} 
If not mentioned, for all $j \in \Z$, $j <0$, we denote by $\omega_{j}=-\omega_{-j}$. 

\subsection{Solvability of a moment problem in $L^2(0,T)$ with polynomial constraints}
First, following some known results about trigonometric moment problems, one can prove the solvability in high-regularity spaces of such problems but without simultaneous estimates. This can be deduced from the solvability of a moment problem in $L^2(0,T)$ with polynomial constraints, which is therefore the starting point of this section. The results presented in this subsection are a generalization of the work \cite[Appendix B]{BL10}. Therein, the full proofs are left to the reader. 

First, one can state that under an asymptotic gap condition, the family of complex exponentials, with an added finite number of polynomials, has a biorthogonal family.
\begin{lem}
Let $T>0$ and $(\omega_j)_{j \in \N}$ an increasing sequence of $[0, +\infty)$ such that $\omega_0=0$ and satisfying \eqref{AsymptGap}.Then, for all $n \in \N^*$, the family $\Theta_n:= \left\{ t^q ; \ q=1, \ldots n \right\} \cup \left\{ \ e^{i \omega_j t} ; \ j \in \Z \right\}$ is minimal in $L^2(0,T)$ and thus admits a biorthogonal family. 
\end{lem}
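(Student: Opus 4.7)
\emph{Proof plan.} The plan is to use that in a Hilbert space, minimality of a family is equivalent to the existence of a biorthogonal family; it thus suffices to build, for each $\theta\in\Theta_n$, a function $\chi_\theta\in L^2(0,T)$ satisfying $\langle \chi_\theta,\theta'\rangle=\delta_{\theta,\theta'}$ for every $\theta'\in\Theta_n$. As the starting point I would use the classical fact that under \eqref{AsymptGap}, the pure exponential family $\mathcal{E}:=\{e^{i\omega_j t}\}_{j\in\Z}$ is itself minimal in $L^2(0,T)$ and admits a biorthogonal family $(\sigma_j)_{j\in\Z}\subset L^2(0,T)$ (see, e.g., \cite{AI95}). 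Moreover, \eqref{AsymptGap} forces the upper density of the symmetrized sequence $(\pm\omega_j)$ to be zero, so by Beurling-type density results the closed span $V:=\overline{\Span}\,\mathcal{E}$ is a proper subspace of $L^2(0,T)$, leaving room for the polynomials.

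The core step, inspired by \cite[Appendix B]{BL10}, is a perturbation/divided-difference argument. For small $\eps>0$, split $\omega_0=0$ into $n+1$ distinct nearby frequencies $0,\eps,\dots,n\eps$; since \eqref{AsymptGap} only constrains the tail, the perturbed sequence still satisfies it and its exponential family admits biorthogonal elements $(\sigma_q^\eps)_{q=0}^n$ at these new frequencies. Suitable $\eps^{-q}$-scaled divided differences of $e^{iq\eps t}$ converge in $L^2(0,T)$ to the monomials $(it)^q/q!$, and I would show that the parallel combinations of $(\sigma_q^\eps)$ stay bounded in $L^2(0,T)$ and converge, along a subsequence, to functions $\chi_q\in L^2(0,T)$ that are biorthogonal to $\{1,t,\dots,t^n\}$ and orthogonal to every $e^{i\omega_j t}$ with $j\neq 0$. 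With $\chi_1,\dots,\chi_n$ constructed, the biorthogonal function for each exponential $e^{i\omega_{j_0}t}$, $j_0\neq 0$, is obtained from $\sigma_{j_0}$ by subtracting the unique linear combination of the $\chi_q$ that cancels its pairings against $t,\dots,t^n$.

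\emph{Main obstacle.} The delicate point is proving the uniform $L^2(0,T)$-boundedness of the scaled divided differences of the perturbed biorthogonals as $\eps\to 0$: each $\sigma_q^\eps$ diverges individually when the first $n+1$ frequencies coalesce, but their Vandermonde-like combinations are tailored to cancel the singular contributions exactly. The asymptotic gap \eqref{AsymptGap} is crucial for a uniform-in-$\eps$ control of the high-frequency tail, while the finite-dimensional low-frequency bookkeeping reduces to an explicit inversion in the spirit of the theory of generalized exponentials with multiplicities; this makes the present lemma the natural generalization of \cite[Appendix B]{BL10}, where only a stronger uniform-gap assumption is treated.
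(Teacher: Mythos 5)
The paper gives no proof of this lemma: the paragraph immediately before it states that the results of the subsection generalize \cite[Appendix B]{BL10} and that ``the full proofs are left to the reader,'' so there is no in-paper argument to compare against. Judged on its own terms, your plan follows a classical coalescence/divided-difference route, but it stops precisely at the step that would make it a proof. The uniform-in-$\eps$ $L^2(0,T)$ bound on the scaled combinations of the perturbed biorthogonals $(\sigma_q^\eps)$ is what the whole construction hinges on, and you assert it without proving it. This is not a routine fact: as $\eps\to 0$ the perturbed low frequencies $0,\eps,\dots,n\eps$ coalesce, each $\sigma_q^\eps$ diverges individually (the angle between each of $1,e^{i\eps t},\dots,e^{in\eps t}$ and the closed span of the remaining family collapses), and the inverse Vandermonde transform you invoke to build divided-difference biorthogonals carries powers of $\eps$ of both signs, so the claimed cancellation needs an explicit quantitative computation (or a compactness argument in a suitable dual frame) that nothing in \eqref{AsymptGap} supplies on its own. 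As written, this central step is a genuine gap, and flagging it as the ``main obstacle'' does not close it.

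A more direct route, closer to the references you cite, avoids the limit entirely. Under \eqref{AsymptGap} the symmetrized sequence $(\pm\omega_j)$ has zero upper density, as you correctly observe; this lets you produce a Paley--Wiener generating function $F$, entire of exponential type controlled by $T$, square-integrable on $\R$, vanishing simply at every $\omega_j$ with $j\neq0$ and to order $n+1$ at $0$. The inverse Fourier transform of $F(z)\big/\big(F'(\omega_{j_0})(z-\omega_{j_0})\big)$ gives the biorthogonal to $e^{i\omega_{j_0}t}$, and suitable linear combinations of $z^{-m}F(z)$, $m=1,\dots,n+1$ (all still entire of the same type, with improved decay), give the biorthogonals to $1,t,\dots,t^n$. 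This produces the biorthogonal family for $\Theta_n$ in one stroke, with no coalescence limit to estimate, and yields minimality immediately. Your proposal correctly identifies the structure of the problem and the crux difficulty, but leaves that difficulty unresolved.
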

From this result and the work of Haraux \cite{H89}, one can state the solvability of trigonometric moment problem in $L^2(0,T)$ with a finite number of constraints on polynomial moments.
\begin{thm}
\label{thm_mom_L2_poly}
Let $T>0$, $n \in \mathbb{N}^*$ and $(\omega_j)_{j \in \mathbb{N}}$ an increasing sequence of $[0, +\infty)$ such that $\omega_0=0$ and satisfying \eqref{AsymptGap}. 
There exists a constant $C>0$ and a continuous linear map $\L^T_0 : \R^n \times l^{2}_{r}(\N, \C) \rightarrow L^2((0,T), \R)$ such that for every sequence $d=\left( (d_{-q})_{q=1, \ldots, n}, \ (d_j)_{j \in \mathbb{N}} \right) \in \R^n \times l^{2}_{r}( \mathbb{N} , \mathbb{C})$, the control $u:=\L^T_0(d) \in L^2( (0,T), \R)$ satisfies the moment problem 
\begin{equation*}
\forall j \in \mathbb{N}, 
\
\int_0^T u(t) e^{i \omega_j t} dt
=
d_j
\quad 
\text{ and }
\quad
\forall q=1, \ldots, n,
\
\int_0^T t^q u(t) dt = d_{-q},
\end{equation*}
and the following size estimate
$$
\| u \|_{L^2(0,T)} \ioe C \left( \sum \limits_{j=-n}^{+\infty} |d_j|^2 \right)^{1/2}.
$$
\end{thm}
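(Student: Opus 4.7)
The plan is to build $\L^T_0$ by expansion in a biorthogonal family. Let $(g_j)_{j \in \Z} \cup (h_q)_{q=1,\ldots,n}$ denote the biorthogonal family to $\Theta_n$ in $L^2((0,T),\C)$ furnished by the preceding lemma, so that
$$\int_0^T g_j(t) e^{i\omega_k t}\,dt = \delta_{jk}, \quad \int_0^T g_j(t) t^q\,dt = 0, \quad \int_0^T h_q(t) e^{i\omega_k t}\,dt = 0, \quad \int_0^T h_q(t) t^{q'}\,dt = \delta_{qq'}.$$
I would then define, for $d = ((d_{-q})_{q=1}^n, (d_j)_{j \in \N})$ and with the convention $d_{-j}:=\overline{d_j}$ for $j \geq 1$,
$$\L^T_0(d) := \sum_{j \in \Z} d_j\, g_j + \sum_{q=1}^n d_{-q}\, h_q,$$
which formally solves the moment problem by the biorthogonality relations.

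The crucial point is to turn this formal expression into a convergent $L^2$-bound. For this, minimality of $\Theta_n$ is not enough: I need it to be a \emph{Riesz sequence} in $L^2(0,T)$. I would obtain this in two substeps. First, under \eqref{AsymptGap}, an Ingham-type inequality (after splitting off finitely many exceptional small-gap frequencies) shows that $(e^{i\omega_j t})_{j \in \Z}$ is a Riesz basis of its closed linear span in $L^2(0,T)$ for every $T>0$. Second, Haraux's stability result from \cite{H89} guarantees that adjoining finitely many vectors---here the monomials $t, \ldots, t^n$---preserves the Riesz sequence property provided the enlarged family is still minimal, which is precisely what the preceding lemma supplies. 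The biorthogonal family is then itself a Riesz basis of its closed span, yielding immediately
$$\bigl\| \L^T_0(d) \bigr\|_{L^2(0,T)} \leq C \Bigl( \sum_{j = -n}^{+\infty} |d_j|^2 \Bigr)^{1/2}.$$

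It remains to ensure that $\L^T_0(d)$ is real-valued when $d \in \R^n \times l^2_r(\N, \C)$. Thanks to the symmetry $\omega_{-j} = -\omega_j$, the biorthogonal family can be chosen so that $g_{-j} = \overline{g_j}$ and the $h_q$ are real (by averaging the initial biorthogonal elements with their complex conjugates, an operation preserving both biorthogonality and the Riesz basis property). Combined with $d_0 \in \R$, with real data $d_{-q}$ for $q=1,\ldots,n$, and with the convention $d_{-j} = \overline{d_j}$, this forces $\L^T_0(d) \in L^2((0,T), \R)$.

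The main obstacle I expect is the clean packaging of the two stability arguments above. Ingham's theorem in its textbook form requires a \emph{uniform} gap $\omega_{j+1}-\omega_j > 2\pi/T$, whereas here I only have this asymptotically; the trick is to peel off the finitely many indices violating the uniform bound, treat them as an additional finite perturbation, and absorb them via Haraux's stability theorem, which is exactly the flexibility afforded by \eqref{AsymptGap}. Once both the Riesz basis step and the real-valuedness step are properly handled, the size estimate and the linear continuity of $\L^T_0$ drop out automatically from the Riesz basis bound.
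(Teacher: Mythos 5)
Your proposal is correct and matches the route the paper intends: the paper leaves this proof to the reader, but explicitly names the biorthogonal family of the preceding lemma and Haraux's work as the two ingredients, and that is exactly your argument (Ingham for the large-gap tail, a finite-dimensional stability argument to re-adjoin the small-gap exponentials and the monomials, hence a Riesz sequence which turns the biorthogonal expansion into a quantitative bound, with real-valuedness from the symmetrized biorthogonal system). The only slip is notational: in your definition $\L^T_0(d):=\sum_{j\in\Z}d_j g_j+\sum_{q=1}^n d_{-q}h_q$ you impose $d_{-j}:=\overline{d_j}$, which collides with the polynomial targets already labelled $d_{-q}$; write the exponential part as $\sum_{j\ge 0}d_j g_j+\sum_{j\ge 1}\overline{d_j}\,g_{-j}$ to avoid the clash.
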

Then, we deduce, for any integer $k$, the solvability of such moment problems in $H^k_0(0,T)$ with only an estimate in the most regular space. 
\begin{thm}
\label{thm_mom_Hm}
Let $T>0$, $(n, k) \in (\N^*)^2$ and $(\omega_j)_{j \in \mathbb{N}}$ an increasing sequence of $[0, +\infty)$ such that $\omega_0=0$ and satisfying \eqref{AsymptGap}.
There exists a constant $C>0$ and a continuous linear map $\tild{\L}^T_k : \R^n \times h^{2k}_{\omega,r}(\N, \C) \rightarrow H^k_0((0,T), \R)$ such that for every sequence $d=\left( (d_{-q})_{q=1, \ldots, n}, \ (d_j)_{j \in \mathbb{N}} \right) \in \R^n \times h^{2k}_{\omega,r}( \mathbb{N} , \mathbb{C})$, the control $u:=\tild{\L}^T_k(d) \in H^k_0( (0,T), \R)$ satisfies the moment problem 
\begin{equation*}
\forall j \in \mathbb{N}, 
\
\int_0^T u(t) e^{i \omega_j t} dt
=
d_j
\quad 
\text{ and }
\quad
\forall q=1, \ldots, n,
\
\int_0^T t^q u(t) dt = d_{-q},
\end{equation*}
and the following size estimate
$$
\| u \|_{H^k_0(0,T)} 
\ioe 
C
\left(
\sum \limits_{j=-n}^{+\infty} \left| \left( \delta_{j,0} + \omega_j^k \right) d_j \right|^2
\right)^{1/2}
.
$$
\end{thm}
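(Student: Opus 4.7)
The plan is to reduce this higher-regularity statement to \cref{thm_mom_L2_poly} via the substitution $v := u^{(k)}$. With the norm on $H^k_0((0,T),\R)$ equal to $\|u^{(k)}\|_{L^2}$, the map $u \mapsto u^{(k)}$ is an isometric bijection from $H^k_0((0,T),\R)$ onto the closed subspace of $L^2((0,T),\R)$ consisting of those $v$ satisfying the $k$ compatibility conditions
\begin{equation*}
\int_0^T s^q v(s)\,ds = 0, \qquad q = 0, \ldots, k-1,
\end{equation*}
its inverse being the $k$-th iterated primitive of $v$ vanishing at $t=0$. These conditions encode the vanishing of $u^{(j)}(T)$ for $j = 0, \ldots, k-1$, those at $t=0$ being built into the iterated primitive from $0$.

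For $u \in H^k_0$, integration by parts $k$ times (all boundary terms vanish thanks to the $H^k_0$-structure at both endpoints) transforms each moment condition on $u$ into one on $v$. For $j \geq 1$ one obtains
\begin{equation*}
\int_0^T u(t) e^{i\omega_j t}\,dt = \frac{(-1)^k}{(i\omega_j)^k} \int_0^T v(t) e^{i\omega_j t}\,dt,
\end{equation*}
the constant moment $\int_0^T u\,dt = d_0$ becomes $\int_0^T t^k v(t)\,dt = (-1)^k k!\, d_0$, and each polynomial constraint $\int_0^T t^q u(t)\,dt = d_{-q}$ with $q \in \{1,\ldots,n\}$ becomes $\int_0^T t^{q+k} v(t)\,dt = (-1)^k (q+1)\cdots(q+k)\, d_{-q}$. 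Hence solving the original problem is equivalent to solving the following $L^2$-moment problem for $v$: prescribe the exponential moments $\tild{d}_0 := 0$ and $\tild{d}_j := (-1)^k (i\omega_j)^k d_j$ for $j \geq 1$, together with polynomial moments $\tild{d}_{-q}$ for $q = 1, \ldots, N := n+k$ defined by $\tild{d}_{-q} := 0$ for $q = 1, \ldots, k-1$, $\tild{d}_{-k} := (-1)^k k!\, d_0$, and $\tild{d}_{-(k+q')} := (-1)^k (q'+1)\cdots(q'+k)\, d_{-q'}$ for $q' = 1, \ldots, n$.

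This fits exactly the hypotheses of \cref{thm_mom_L2_poly} with $n$ replaced by $N = n + k$. Applying it produces, continuously and linearly in the data, a function $v = \L^T_0(\tild{d}) \in L^2((0,T),\R)$; we then define $\tild{\L}^T_k(d) := u$, where $u$ is the $k$-th iterated primitive of $v$ vanishing at $t=0$. Because the components of $\tild{d}$ indexed by $q = 0, 1, \ldots, k-1$ have been set to zero, the $k$ compatibility conditions hold, whence $u \in H^k_0((0,T),\R)$ and solves the original moment problem. The identity $\|u\|_{H^k_0} = \|v\|_{L^2}$ combined with the $L^2$-estimate from \cref{thm_mom_L2_poly} then yields the announced $h^{2k}_\omega$-bound, since $|\tild{d}_j| = \omega_j^k |d_j|$ for $j \geq 1$ while the nonzero $\tild{d}_{-q}$'s are controlled by $|d_0|$ and the $|d_{-q'}|$'s up to universal constants.

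The only real subtlety in this scheme is the bookkeeping: one must check that the $k$ compatibility conditions on $v$ are correctly encoded as vanishing components of $\tild{d}$, and that every integration by parts produces no boundary contribution. Both are immediate from the $H^k_0$-structure of $u$ and the construction above, after which \cref{thm_mom_L2_poly} provides all the analytical content of the argument.
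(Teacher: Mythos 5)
Your construction is correct, and it is the same one the paper uses: the paper simply writes $\tild{\L}^T_k := \mathcal{B}_k \circ \L_0^T \circ \mathcal{A}_k$, with $\mathcal{A}_k$ encoding exactly your recipe (setting $\tild d_0$ and $\tild d_{-1},\dots,\tild d_{-(k-1)}$ to zero, multiplying the exponential moments by $(-i\omega_j)^k$, and shifting the polynomial indices by $k$ with the factorial factors $q!/(q-k)!$) and $\mathcal{B}_k$ being the iterated-primitive operator $v\mapsto\int_0^t\frac{(t-\tau)^{k-1}}{(k-1)!}v(\tau)\,d\tau$. Your exposition spells out the integration-by-parts bookkeeping and the identification of the compatibility conditions with vanishing components of $\tild d$, which the paper leaves implicit, but the content and the estimate derivation are identical.
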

\begin{proof}
The proof follows with $\tild{\L}^T_k := \mathcal{B}_k \circ \L_0^T \circ \mathcal{A}_k$ where $\L_0^T$ is defined in \cref{thm_mom_L2_poly},
$\mathcal{A}_k : \R^n \times h^{2k}_{\omega,r}( \mathbb{N} , \mathbb{C})  \to  \R^{k+n} \times l_r^2(\N, \C)$ and $\mathcal{B}_k : L^2( (0,T), \R) \to  H^k((0,T), \R)$ are respectively given by
\begin{equation*}
\mathcal{A}_k
\left( (d_{-q})_{q=1, \ldots,n}, \ (d_j)_{j \in \N} \right)
:=
\left( 
\left(
\frac{(-1)^k q!}{(q-k)!} d_{-q+k} \1_{q \in \{k, \ldots, k+n\}}
\right)_{q=1, \ldots, k+n}
,
\
\left(
(-i \omega_j)^k d_j \1_{j \in \N^*}
\right)_{j \in \N}
\right),
\end{equation*}
and
\begin{equation}
\label{op_primitive}
\mathcal{B}_k(v)
:=
\left( t \mapsto \int_0^t \frac{(t-\tau)^{k-1}}{(k-1)!} v(\tau) d\tau \right).
\end{equation}
\end{proof}

\subsection{Solvability of a moment problem in $H^k_0(0,T)$ with various estimates}
Notice that \cref{thm_mom_Hm} provides operators $\tild{\L}_k^T :  \left( (d_{-q})_{q=1, \ldots, n}, \ (d_j)_{j \in \mathbb{N}} \right)  \in \R^n \times h^{2k}_{\omega,r}( \mathbb{N} , \mathbb{C}) \mapsto u \in H^k_0( (0,T), \R)$ solving the moment problem which depend on $k$, preventing from having estimates on $u$, for a given sequence $(d_j)$, simultaneously in various Sobolev spaces. Therefore, the goal of this subsection is to prove that one can solve trigonometric moment problems in $H^k_0(0,T)$ with simultaneous estimates on the control.
%
%
%
%
%

First, the result can be proved when dealing with only a finite number of moments. 
\begin{prop}
\label{thm_mom_fini}
Let $T>0$, $(n, k, N) \in \N^* \times \N^2$ and $(\omega_j)_{j \in \mathbb{N}}$ an increasing sequence of $[0, +\infty)$ such that $\omega_0=0$ and satisfying \eqref{AsymptGap}. There exists a constant $C_N>0$ and a continuous linear map $\L^{N, T}_{\lf} : \R^n \times \left( \R \times \C^{N-1} \right) \rightarrow H^k_0((0,T), \R)$ such that for every sequence $d=\left( (d_{-q})_{q=1, \ldots, n}, \ (d_j)_{j=0, \ldots, N-1} \right) \in \R^n \times \left( \R \times \C^{N-1} \right)$, the control $u:=\L^{N, T}_{\lf}(d) \in H^k_0( (0,T), \R)$ satisfies the moment problem
\begin{equation*}
\forall  j=0, \ldots, N-1, \ \int_0^T u(t) e^{i \omega_j t}dt = d_j, \quad \text{ and } \quad \forall  j \soe N, \ \int_0^T u(t) e^{i \omega_j t}dt =0,
\end{equation*}
\begin{equation*}
\forall q=1, \ldots, n, \ \int_0^T t^q u(t) dt = d_{-q},
\end{equation*}
and the size estimates
\begin{equation}
\label{estim_moment_finie}
\| u \|_{H^m_0(0,T)} \ioe C_N \left( \sum \limits_{j=-n}^{N-1} \left| \left( \delta_{j,0}+\omega_j^{m} \right) d_j \right|^2\right)^{1/2}, \quad  \forall m=0, \ldots, k. 
\end{equation}
\end{prop}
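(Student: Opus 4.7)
The key observation is that prescribing only finitely many non-zero exponential moments (together with the finitely many polynomial moments) forces the resulting control to live in a finite-dimensional subspace of $H^k_0(0,T)$, where equivalence of Sobolev norms automatically yields simultaneous estimates.

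The first step is to construct $\L^{N,T}_{\lf}$ by reducing to \cref{thm_mom_Hm}. Introduce the extension-by-zero map
$$\iota : \R^n \times (\R \times \C^{N-1}) \longrightarrow \R^n \times h^{2k}_{\omega,r}(\N, \C), \qquad \iota(d)_j := \begin{cases} d_j & \text{if } -n \leq j \leq N-1, \\ 0 & \text{if } j \geq N, \end{cases}$$
which is well-defined since the image sequence has finite support and therefore trivially lies in $h^{2k}_{\omega,r}$. Set $\L^{N,T}_{\lf} := \tild{\L}^T_k \circ \iota$. By \cref{thm_mom_Hm}, the control $u := \L^{N,T}_{\lf}(d)$ belongs to $H^k_0((0,T),\R)$ and satisfies all the prescribed exponential and polynomial moments, including the vanishing conditions $\int_0^T u(t) e^{i\omega_j t}\, dt = 0$ for $j \geq N$.

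For the simultaneous estimates, expand
$$u = \L^{N,T}_{\lf}(d) = \sum_{j=-n}^{N-1} d_j\, e_j \qquad \text{with} \qquad e_j := \L^{N,T}_{\lf}(\delta^{(j)}),$$
where $\delta^{(j)}$ denotes the $j$-th standard basis vector of the finite-dimensional input space. Each $e_j$ is a fixed element of $H^k_0(0,T) \hookrightarrow H^m_0(0,T)$, independent of $d$. For any $m \in \{0, \ldots, k\}$, since $\omega_j > 0$ for $j \neq 0$ and $\delta_{0,0} = 1$, the weights $\delta_{j,0} + \omega_j^m$ are strictly positive for every $j \in \{-n, \ldots, N-1\}$, so Cauchy-Schwarz gives
$$\| u \|_{H^m_0(0,T)} \leq \left( \sum_{j=-n}^{N-1} \frac{\| e_j \|_{H^m_0}^2}{(\delta_{j,0}+\omega_j^m)^2} \right)^{\!\!1/2} \left( \sum_{j=-n}^{N-1} |(\delta_{j,0}+\omega_j^m) d_j|^2 \right)^{\!\!1/2}.$$
The first factor is a finite sum of fixed positive quantities, bounded by some $C_{N,m} > 0$. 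Taking $C_N := \max_{m=0,\ldots,k} C_{N,m}$ produces the uniform constant required by \eqref{estim_moment_finie}.

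\textbf{Main obstacle.} There is essentially no substantive difficulty: the finite-moment hypothesis is a strong simplification, and the entire content of the simultaneous estimates comes from equivalence of norms on a finite-dimensional subspace. The only bookkeeping is to confirm that the weights appearing in the denominator of the Cauchy-Schwarz bound do not vanish, and that the number of nonzero terms is fixed by $N$ and $n$, so that $C_N$ can be chosen uniformly in $m \in \{0,\ldots,k\}$.
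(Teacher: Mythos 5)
Your proof is correct and follows essentially the same route as the paper: define $\L^{N,T}_{\lf}$ by extending $d$ by zero and composing with the operator $\tild{\L}^T_k$ from \cref{thm_mom_Hm}, then invoke equivalence of norms on a finite-dimensional input space to get the simultaneous estimates (which you make explicit via Cauchy--Schwarz). One small slip worth flagging: with the convention $\omega_{-j}=-\omega_j$, the weights $\delta_{j,0}+\omega_j^m$ need not be strictly positive for $j<0$ when $m$ is odd, but they are nonzero, and since only their squares and absolute values enter the bound the argument is unaffected.
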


\begin{proof}
The proof follows with 
\begin{equation*}
\L^{N, T}_{\lf}\left( (d_{-q})_{q=1, \ldots, n}, \ (d_j)_{j=0, \ldots, N-1} \right):=\tild{\L}^T_{k}\left( (d_{-q})_{q=1, \ldots, n}, \ (d_j \1_{j=0, \ldots, N-1})_{j \in \N} \right)
\end{equation*}
where $\tild{\L}^T_k$ is defined in \cref{thm_mom_Hm}, using the equivalence of norms in finite dimension.
\end{proof}



It remains to deal with the high frequencies. To that end, we will assume from now on that the sequence of frequencies satisfies the polynomial asymptotic gap \eqref{AsymptGapPoly}. In other words, the goal is to prove that the map 
$$L : u \mapsto  \left( \int_0^T u(t) e^{i \omega_j t} dt \right)_{j \in \N}$$
admits a continuous-right inverse $P: h^{2k}_{\omega,r}(\N, \C) \rightarrow H^k_0((0,T), \R)$ which is still continuous from $ h^{2m}_{\omega,r}(\N,\C)$ to $H^m_0((0,T), \R)$ for all $m=0, \ldots, k$. Usually, a continuous right inverse of $L$ i.e.\ an operator solving the moment problem is sought under the form
$P(d)= \sum  d_j \xi_j^*$
where $\{ \xi_j^*, j \in \Z \}$ is the biorthogonal family of $\{ e^{i \omega_j \cdot}, j \in \Z \}$. To conclude, one would need to be able to estimate such biorthogonal family simultaneously in all the Sobolev spaces $H^m_0(0,T)$, for $m=0, \ldots, k$. 

Such strategies have already been used. Explicit computations of the biorthogonal family with good estimates have, for example, been used: in \cite{TT07} to prove upper bounds for the control cost in the case of systems governed by the Schrödinger or the heat equation, 
in \cite{L17} to study the cost of the control in the case of a minimal time for the one-dimensional heat equation or in \cite{BBGBO14} to characterize the null controllability of a system of $n$ parabolic equations in cylindrical domains. Sharp estimates for biorthogonal families of exponential functions without gap conditions have been given in \cite{GBO21} and used to prove new results on the cost of the boundary null controllability of parabolic systems. A new block resolution technique, together with sharp estimates, has also been used in \cite{BBM20} to characterize the minimal null control time for abstract linear control problem. 

However, here we choose to not compute the biorthogonal family. As the exponentials are "almost orthogonal" for high frequencies, the main idea is to rather seek a solution of the moment problem under the form
\begin{equation}
\label{form_P_good}
P(d)(t)= \sum \limits_{j=-\infty}^{+\infty} d_j e^{i \omega_jt} \chi(t),
\end{equation}
with $\chi$ a weight function which allows to improve the decay of the coefficients $\left( \langle e^{i \omega_j \cdot} \chi,e^{i \omega_p \cdot} \rangle\right)_{j \neq p}$ at high frequencies. As such $P$ will no longer exactly solve the moment problem, the right inverse of $L$ will be constructed as an iteration of \eqref{form_P_good}, quantifying the error term. Besides, the explicit form of $P$ will allow to easily estimate it in various spaces. 

To implement such strategy, we start by introducing the operator giving the moment problem for high frequencies and the operator which will almost be its right-inverse. 

\begin{lem}
Define, for all $m \in \N$ and $N \in \N^*$, 
$$
\begin{array}{|lrcl}
L_N^m : & H^m_0((0,T), \R) & \rightarrow & h^{2m}_{\omega,r}(\N, \C) \\
    & u & \mapsto & \left( \int_0^T u(t) e^{i \omega_j t} dt \right)_{j \geqslant N},
     \end{array}
\begin{array}{|lrcl}
P_N^m : & h^{2m}_{\omega,r}(\N, \C) & \rightarrow & H^m_0((0,T),\R)  \\
    & (d_j)_{j\geqslant N} & \mapsto & \sum \limits_{| j | \geq N} d_j \widetilde{\xi_j}, \end{array}
$$ 
where for all $j \in \Z,$ $j<0$, $d_j := \overline{d_{-j}}$ and for all $j \in \Z$, for all $t \in [0,T]$ $\widetilde{\xi_j}(t) := \frac{1}{T} e^{-i \omega_j t} \chi(t)$ with $\chi \in C^{\infty}_c((0,T), \R)$ such that $\int_0^T \chi(t) dt=1.$
Then, for all $m \in \N$ and $N \in \N^*$, $L_N^m$ and $P_N^m$ are linear continuous applications. 
\end{lem}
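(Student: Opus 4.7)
Both operators are linear by construction, so only well-posedness and continuity need to be established. The two arguments hinge on the same pair of ingredients: integration by parts, which exchanges a factor of $\omega_j$ for a time derivative, and the upper half of Ingham's inequality (the Bessel-type bound), which holds under the asymptotic gap condition and yields a constant $C > 0$ such that, for all $(c_j)_{j \in \Z} \in \ell^2$ and all $v \in L^2(0,T)$,
\begin{equation*}
\Big\| \sum_j c_j e^{i \omega_j t} \Big\|_{L^2(0,T)}^2 \ioe C \sum_j |c_j|^2
\quad \text{and equivalently} \quad
\sum_j \Big| \int_0^T v(t) e^{i \omega_j t} dt \Big|^2 \ioe C \| v \|_{L^2}^2.
\end{equation*}
This is the same Ingham corollary already invoked in the proof of \cref{estim_G_C0}.

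\textbf{Continuity of $L_N^m$.} Let $u \in H^m_0(0,T)$, so that $u^{(l)}(0)=u^{(l)}(T)=0$ for $l=0,\ldots,m-1$. Integrating by parts $m$ times against $e^{i\omega_j t}$ and using the vanishing boundary terms gives, for every $j \soe N$,
\begin{equation*}
(i \omega_j)^m \int_0^T u(t) e^{i \omega_j t} dt = (-1)^m \int_0^T u^{(m)}(t) e^{i \omega_j t} dt.
\end{equation*}
Since $N \soe 1$, we have $\delta_{j,0}=0$ for all $j \soe N$, so the norm on the target space reduces to $\| L_N^m u \|_{h^{2m}_\omega}^2 = \sum_{j \soe N} \omega_j^{2m} | \int_0^T u(t) e^{i\omega_j t} dt |^2$. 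Substituting the IBP identity and applying the Bessel bound to $v = u^{(m)}$ yields
\begin{equation*}
\| L_N^m u \|_{h^{2m}_\omega}^2 = \sum_{j \soe N} \Big| \int_0^T u^{(m)}(t) e^{i \omega_j t} dt \Big|^2 \ioe C \| u^{(m)} \|_{L^2}^2 = C \| u \|_{H^m_0}^2.
\end{equation*}

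\textbf{Continuity of $P_N^m$.} Given $d = (d_j)_{j \soe N} \in h^{2m}_{\omega,r}$, extend by $d_{-j} := \overline{d_j}$, so that
\begin{equation*}
P_N^m d (t) = \frac{\chi(t)}{T} \sum_{|j| \soe N} d_j e^{-i \omega_j t}
\end{equation*}
is real by the conjugation symmetry and, since $\chi \in C^\infty_c((0,T))$, compactly supported in $(0,T)$. Applying the Bessel bound to the partial sums of $f(t) := \sum_{|j| \soe N} d_j e^{-i\omega_j t}$ (and to its formal derivatives $\sum d_j (-i\omega_j)^l e^{-i\omega_j t}$, $l \ioe m$) shows the series converges in $H^m(0,T)$, hence $P_N^m d$ is a well-defined element of $H^m_0(0,T)$. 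To quantify, by Poincaré $\| P_N^m d \|_{H^m_0} = \| (P_N^m d)^{(m)} \|_{L^2}$, and Leibniz gives
\begin{equation*}
(P_N^m d)^{(m)}(t) = \frac{1}{T} \sum_{l=0}^{m} \binom{m}{l} \chi^{(l)}(t) \sum_{|j| \soe N} d_j (-i \omega_j)^{m-l} e^{-i \omega_j t}.
\end{equation*}
Using $\| \chi^{(l)} \|_{L^\infty} < \infty$ and the Bessel bound on each inner sum,
\begin{equation*}
\| (P_N^m d)^{(m)} \|_{L^2}^2 \ioe C_\chi \sum_{l=0}^{m} \sum_{|j| \soe N} |d_j|^2 \omega_j^{2(m-l)}.
\end{equation*}
Since $\omega_j$ is increasing with $\omega_N > 0$, we have $\omega_j^{2(m-l)} \ioe \omega_N^{-2l} \omega_j^{2m}$ for $|j| \soe N$, so the last expression is bounded by $C \omega_N^{-2m} \| d \|_{h^{2m}_\omega}^2$. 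This gives the required continuity estimate.

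\textbf{Expected obstacle.} No deep difficulty is anticipated: the argument is essentially a careful application of the Bessel side of Ingham's inequality. The only bookkeeping subtlety is ensuring that when Leibniz's rule redistributes the derivatives in $(P_N^m d)^{(m)}$, the resulting factors $\omega_j^{2(m-l)}$ can indeed be reabsorbed into the $h^{2m}_\omega$ weight; this works because $|j| \soe N \soe 1$ forces $\omega_j \soe \omega_N > 0$, so losses of powers of $\omega_j$ are controlled by a constant depending on $N$.
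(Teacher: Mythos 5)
Your proof is correct and follows essentially the same route as the paper: integration by parts together with the Bessel/Riesz-sequence upper bound for $L_N^m$, and the Leibniz rule plus the same Bessel bound (with the $\omega_j \geq \omega_N > 0$ observation to reabsorb the lower powers of $\omega_j$) for $P_N^m$. You spell out the Leibniz-rule bookkeeping that the paper compresses into the phrase ``algebra structure of $H^m_0(0,T)$'', which is a helpful clarification but not a different argument.
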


\begin{proof}
Let $m \in \N$ and $N \in \N^*$.
%
%
%
First, the continuity of $L^m_N$ comes from that for all $u \in H^m_0(0,T)$, by integrations by parts,
\begin{equation*}
\sum \limits_{j=N}^{+\infty} 
\left|
\omega_j^m
\int_0^T u(t) e^{i \omega_j t}
dt
\right|^2
= 
\sum \limits_{j=N}^{+\infty} 
\left|
\langle u^{(m)}, e^{i \omega \cdot} \rangle
\right|^2
\ioe C \| u^{(m)} \|_{L^2(0,T)}^2,
\end{equation*}
as the family $(e^{i \omega_j \cdot})_{j \in \Z}$ is a Riesz basis. Secondly, the continuity of $P^m_N$ stems from the fact that for all $(d_j)_{j \geq N}$ in $h^{2m}_{\omega,r}(\N, \C)$, by the algebra structure of $H^m_0(0,T)$, 
 \begin{equation*}
 \left\| \sum \limits_{| j | \soe N} d_j \tild{\xi_j} \right\|_{H^m_0(0,T)} 
\ioe  
\| \chi \|_{H^m(0,T)} 
\left\| 
\sum \limits_{| j | \soe N} \omega_j^m d_j e^{-i \omega_j \cdot} 
\right\|_{L^2(0,T)} 
\ioe 
C
\| \left( \omega_j^m d_j \right)_{ | j | \soe N} \|_{l^2(\N, \C)},
 \end{equation*}
as the family $(e^{i \omega_j \cdot})_{j \in \Z}$ is a Riesz basis. The reader can for example refer to \cite[Proposition 19]{BL10} to find the results on Riesz basis used in this proof. 
%
%
%
\end{proof}
With these notations, our goal is to prove that, for $N$ large enough, the application $L_N^m$ has a common continuous right inverse for all $m=0, \ldots, k$. To that end, we start by quantifying in which way $P_N^m$ is almost the right-inverse of $L_N^m$. 
\begin{lem}
\label{almost_inv}
Let $k \in \N^*$. For all $\eps>0$, there exists $N_1 \in \N^*$ such that for all $N \soe N_1$, for all $m=0, \ldots, k$, for all $d \in h^{2m}_{\omega, r}(\N, \C)$, 
\begin{equation*}
\| L_N^m \circ P_N^m(d) - d \|_{h^{2m}_{\omega,r}(\N, \C)} \ioe \eps \| d\|_{h^{2m}_{\omega,r}(\N, \C)}.
\end{equation*}
\end{lem}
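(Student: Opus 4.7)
The first step is to write $(L_N^m\circ P_N^m(d))_p$ explicitly. Introducing $\hat\chi(\xi):=\int_0^T e^{-i\xi t}\chi(t)\,dt$ and using $\int_0^T\chi = 1$, a direct computation gives, for every $p\geq N$,
\begin{equation*}
(L_N^m\circ P_N^m(d))_p - d_p \;=\; \frac{1}{T}\sum_{\substack{|j|\geq N \\ j\neq p}} d_j\,\hat\chi(\omega_j-\omega_p),
\end{equation*}
since the diagonal term $j=p$ contributes exactly $\hat\chi(0)/T = 1$. The task then reduces to bounding an off-diagonal integral operator with kernel $\hat\chi(\omega_j-\omega_p)/T$, weighted by $\omega_p^m$ on the output side and $\omega_j^m$ on the input side (for $N\geq 1$ the $\delta_{\cdot,0}$ terms are absent).

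Next I would exploit two decay mechanisms. Since $\chi\in C_c^\infty(0,T)$, repeated integration by parts yields $|\hat\chi(\xi)|\leq C_M(1+|\xi|)^{-M}$ for every $M\in\mathbb{N}$. Meanwhile, the polynomial asymptotic gap \eqref{AsymptGapPoly} implies that for $|j|,p\geq N\geq N_0$ with $j\neq p$, $|\omega_j-\omega_p|\gtrsim N^\eps|j-p|$ when $j$ and $p$ have the same sign, while $|\omega_j-\omega_p|\geq \omega_N\to+\infty$ when they have opposite signs. To handle the output weight uniformly in $m\in\{0,\ldots,k\}$, I would use the convexity inequality
\begin{equation*}
\omega_p^m \;\leq\; 2^{m-1}\bigl(|\omega_p-\omega_j|^m + \omega_j^m\bigr),
\end{equation*}
absorbing the factor $|\omega_p-\omega_j|^m\leq|\omega_p-\omega_j|^k$ into the super-polynomial decay of $\hat\chi$, and keeping the remaining weight $\omega_j^m$ attached to $d_j$.

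A discrete Schur test / Young's inequality then reduces the operator bound to controlling
\begin{equation*}
\sup_{p\geq N}\sum_{\substack{|j|\geq N \\ j\neq p}}(1+|\omega_j-\omega_p|)^{-(M-k)} \;\leq\; C\,N^{-\eps(M-k-1)}
\end{equation*}
together with the symmetric column sum: for same-sign indices this is done by comparison with $\sum_{l\geq 1}(1+cN^\eps l)^{-(M-k)}$, while opposite-sign indices contribute $\sum_{|j|\geq N}(1+\omega_{|j|})^{-(M-k)}$, which is already summable and tends to $0$ with $N$. Choosing $M$ large enough (independently of $m$ and $d$), the resulting operator norm from $h^{2m}_{\omega,r}$ to itself can be made smaller than any prescribed $\eps$ by taking $N_1$ sufficiently large.

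The main obstacle is producing a single $N_1$ valid for all $m\in\{0,\ldots,k\}$ simultaneously: a naive control of the ratio $\omega_p^m/\omega_j^m$ (which is unbounded when $p\gg j$) would force $N_1$ to depend on $m$. The convexity splitting above is precisely the trick that delivers uniformity: at worst one pays $k$ powers of $(1+|\omega_j-\omega_p|)$ in the decay exponent, and this loss is compensated once and for all by choosing $M\geq k+2$.
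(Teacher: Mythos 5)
Your proof is correct and follows essentially the same strategy as the paper's: super-polynomial decay of $\hat\chi$ from repeated integration by parts (the paper phrases this as $\left|\int_0^T\widetilde{\xi_j}e^{i\omega_p t}\,dt\right|\leq C/|\omega_p-\omega_j|^\alpha$), the convexity split $\omega_p^m\lesssim|\omega_p-\omega_j|^m+\omega_j^m$ (the paper's ``triangular inequality'' step) to make the bound uniform in $m\in\{0,\ldots,k\}$, and the polynomial gap \eqref{AsymptGapPoly} to drive the off-diagonal contribution to zero as $N\to\infty$. The one technical variation is that you close the operator-norm estimate by a Schur test on the weighted kernel $\omega_p^m\hat\chi(\omega_j-\omega_p)/\omega_j^m$, whereas the paper applies Cauchy--Schwarz in $j$ and bounds the resulting Hilbert--Schmidt-type double sum $\sum_{p,j}\omega_p^{2m}\omega_j^{-2m}|\omega_j-\omega_p|^{-2\alpha}$, using the geometric-mean gap bound $|\omega_j-\omega_p|\gtrsim(j-1)^{\eps/2}(p-1)^{\eps/2}$ instead of your telescoping $|\omega_j-\omega_p|\gtrsim N^{\eps}|j-p|$; both are valid (you are in fact a bit more careful about the $j<0$ indices, and your displayed rate $N^{-\eps(M-k-1)}$ has a minor exponent slip that does not affect the $N\to\infty$ conclusion).
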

\begin{proof}
Let $m \in \{0, \ldots, k\}$ and $N \soe \min(N_0+1, 2)$ where $N_0$ is defined in \eqref{AsymptGapPoly}. Notice that, performing integrations by parts (with no boundary terms as $\chi$ has a compact support), for all $\alpha \in \N^*$, there exists a constant $C=C( \chi^{(\alpha)}, T)>0$ such that, for all $j, p \in \N$, 
\begin{equation*}
\int_0^T \widetilde{\xi_j}(t)e^{i\omega_jt} dt=1 
\quad \text{ and } \quad
\left|\int_0^T \widetilde{\xi_j}(t)e^{i\omega_kt} dt\right| \ioe \frac{C}{ \left|\omega_k-\omega_j\right|^{\alpha}} \text{ if } j \neq k.
\end{equation*}
The coefficient $\alpha$ will be chosen later as large as needed. Using this remark, together with the following equality
\begin{equation*}
\left( L_N^m \circ P_N^m(d) \right)_p
= \int_0^T P_N^m(d)(t) e^{i \omega_pt } dt 
= d_p + \sum \limits_{|j | \soe N, \ j \neq p} d_j \int_0^T  \widetilde{\xi_j}(t) e^{i\omega_p t} dt, 
\quad \forall p \in \N,
\end{equation*} 
we get, using Cauchy-Schwarz inequality, 
\begin{align*}
\| L_N^m \circ P_N^m -d \|_{h^{2m}_{\omega,r}}^2 
&= \sum \limits_{p \soe N} \omega_p^{2m} \Big| \sum \limits_{|j | \soe N, \ j \neq p} d_j \int_0^T  \widetilde{\xi_j}(t) e^{i\omega_p t} dt \Big|^2 \\
&\ioe C \|d \|^2_{h^{2m}_{\omega, r}} \sum \limits_{p \soe N}  \sum_{j  \soe N \atop j \neq p} \frac{\omega_p^{2m}}{\omega_j^{2m}| \omega_j -\omega_p |^{2\alpha}}. 
\end{align*}
Yet, by the triangular inequality, for all $(j, p) \in \N^*$, $j \neq p$, 
\begin{equation*}
\frac{\omega_p^{2m}}{\omega_j^{2m}| \omega_j -\omega_p |^{2\alpha}}
\ioe
C
\left(
\frac{1}{ \omega_j^{2m} | \omega_j - \omega_p|^{2(\alpha-m)}} + \frac{1}{ | \omega_j - \omega_p|^{2\alpha}}
\right)
.
\end{equation*}
Thus, as the sequence $(\omega_j)_{j \in \N^*}$ is bounded by below by $\omega_1$, it is sufficient to prove that for $\beta$ large enough, the series $\sum_{j,p} \frac{1}{ | \omega_j - \omega_p|^{\beta}}$ converges to get that choosing $\alpha$ large enough, for all $\eps>0$, there exists $N_1>0$ such that for all $N \soe N_1$, for all $m=0, \ldots, k$, 
$$
 \sum \limits_{p \soe N}  \sum_{j  \soe N \atop j \neq p} \frac{\omega_p^{2m}}{\omega_j^{2m}| \omega_j -\omega_p |^{2\alpha}} \ioe \eps,
$$
which will conclude the proof. Yet, using the polynomial growth \eqref{AsymptGapPoly},  for all $j, p \soe N$, $j \neq p$, 
\begin{equation*}
| \omega_j- \omega_p| 
\soe 
\min
\left(
\sqrt{
\left(
\omega_j - \omega_{j-1}
\right) 
\left(  
\omega_{p+1} - \omega_{p} 
\right)
}
, 
\sqrt{ 
\left(
\omega_{j+1} - \omega_{j}
\right)
\left(
\omega_{p} - \omega_{p-1}
\right)
}
\right)
\soe 
c (j-1)^{\eps/2} (p-1)^{\eps/2}. 
\end{equation*}
And thus, for all $\beta > \frac{2}{\eps}$, the series $\sum_{j,p} \frac{1}{ | \omega_j - \omega_p|^{\beta}}$ indeed converges.
\end{proof}
By iterating the error estimate given in \cref{almost_inv}, we deduce a common continuous right-inverse for $L^m_N$, the operator of the moment problem.
\begin{lem}
\label{inv}
Let $k \in \N^*$. There exists $N_1 \in \N^*$ such that for all $N \soe N_1$, $L_N^m$ admits the same linear continuous right inverse for all $m=0, \ldots, k$. More precisely, for all $N \soe N_1$, there exists an operator $M_N$ such that for all $m=0, \ldots, k$, $M_N^m : h^{2m}_{\omega, r}(\N, \C) \rightarrow H^m_0((0,T),\R), d \rightarrow M_N(d)$ is continuous and satisfies
$$
L_N^m \circ M_N^m = \Id_{h^{2m}}.
$$
\end{lem}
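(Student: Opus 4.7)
The plan is to realize the common right inverse $M_N$ as a Neumann series built from the almost-inverse $P_N$, exploiting the fact that the error estimate of \cref{almost_inv} is uniform in $m \in \{0,\ldots,k\}$.

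First, I apply \cref{almost_inv} with $\varepsilon = 1/2$ (say) to fix an integer $N_1$ such that for every $N \geq N_1$ and every $m \in \{0,\ldots,k\}$, the operator $R_N^m := \Id_{h^{2m}_{\omega,r}} - L_N^m \circ P_N^m$ satisfies $\|R_N^m\|_{h^{2m}_{\omega,r} \to h^{2m}_{\omega,r}} \leq 1/2$. In particular $A_N^m := L_N^m \circ P_N^m = \Id - R_N^m$ is invertible on $h^{2m}_{\omega,r}$ via the Neumann series $\sum_{n \geq 0} (R_N^m)^n$, whose operator norm is at most $2$.

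Next, I define, for every $N \geq N_1$ and every $d \in h^{2m}_{\omega,r}$,
\begin{equation*}
M_N(d) := \sum_{n=0}^{+\infty} P_N \bigl(R_N\bigr)^n (d),
\end{equation*}
using the same formula at every regularity level $m$. To see that this is well-defined and continuous from $h^{2m}_{\omega,r}$ to $H^m_0$, I note that for $d \in h^{2m}_{\omega,r}$ each iterate $(R_N^m)^n(d)$ lies in $h^{2m}_{\omega,r}$ with norm controlled by $2^{-n} \|d\|_{h^{2m}_{\omega,r}}$; applying the continuity of $P_N^m : h^{2m}_{\omega,r} \to H^m_0$ established in the previous lemma, the series converges absolutely in $H^m_0(0,T)$, with
\begin{equation*}
\|M_N^m(d)\|_{H^m_0(0,T)} \leq \|P_N^m\|\, \sum_{n=0}^{+\infty} 2^{-n} \|d\|_{h^{2m}_{\omega,r}} \leq 2 \|P_N^m\|\, \|d\|_{h^{2m}_{\omega,r}}.
\end{equation*}
The key observation is that the formula defining $M_N$ does not depend on $m$: for $d \in h^{2m}_{\omega,r} \subset l^2_r$ the series actually converges in every $H^{m'}_0$ with $m' \leq m$ to the same function, so $M_N$ is a single linear operator whose restriction to each $h^{2m}_{\omega,r}$ is continuous into $H^m_0$.

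Finally, the right-inverse identity $L_N^m \circ M_N^m = \Id_{h^{2m}_{\omega,r}}$ follows by the standard telescoping computation
\begin{equation*}
L_N^m \circ M_N^m(d) = \sum_{n=0}^{+\infty} A_N^m (R_N^m)^n(d) = \sum_{n=0}^{+\infty} (\Id - R_N^m)(R_N^m)^n(d) = d,
\end{equation*}
valid in $h^{2m}_{\omega,r}$ thanks to the geometric convergence of the series. I don't foresee any serious obstacle: all the analytic work is already packaged in \cref{almost_inv}, and the only subtlety is checking that the Neumann series converges at each regularity level to one and the same function, which is immediate because $h^{2m}_{\omega,r} \hookrightarrow l^2_r$ and $H^m_0 \hookrightarrow L^2$ are continuous embeddings.
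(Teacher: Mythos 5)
Your proof is correct and matches the paper's approach essentially verbatim: both realize $M_N$ as the Neumann series $\sum_{n\geq 0} P_N \circ (\Id - L_N\circ P_N)^n$ built from the almost-inverse of \cref{almost_inv}, the only cosmetic difference being that the paper writes the error as $\Sigma := L_N\circ P_N - \Id$ so the series carries alternating signs $(-1)^n P_N\circ\Sigma^n$, which simplifies to your expression. Your telescoping verification of $L_N^m\circ M_N^m = \Id$ and your remark that the defining formula is $m$-independent (so the limits agree by uniqueness in $L^2$) both reflect what the paper does implicitly.
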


\begin{proof}
By \cref{almost_inv} with $\eps=\frac{1}{2}$, set $N_1>0$ such that for all $N \soe N_1$, for all $m=0, \ldots, k$, we have, 
\begin{equation}
\label{estim_LN_PN}
\| L_N^m \circ P_N^m(d) - d \|_{h^{2m}_{\omega, r}(\N,\C)} \ioe \frac{1}{2} \| d\|_{h^{2m}_{\omega,r}(\N,\C)}, \quad \forall d \in h^{2m}_{\omega, r}(\N,\C).
\end{equation}
Let $N \soe N_1$ and $m \in \{0, \ldots, k\}$. In the following proof, for the sake of clarity, we will forget all the exponents $m$ on the name of the applications, that recall the spaces in which we work. First, if we define $\Sigma:=L_N \circ P_N - \Id$, an induction gives that 
\begin{equation}
\label{hyp_rec}
L_N \circ \sum \limits_{p=0}^n (-1)^p P_N \circ \Sigma^p = \Id + (-1)^n \Sigma^{n+1}, \quad \forall n \in \N.
\end{equation}
Yet, $\Sigma$ is a linear and continuous application from $h^{2m}_{\omega,r}(\N,\C)$ to $h^{2m}_{\omega, r}(\N,\C)$ with its operator norm satisfying $\| \Sigma \|_{h^{2m},h^{2m}} \ioe 1/2$ by \eqref{estim_LN_PN}. Thus, the series $\sum_p  (-1)^p P_N \circ \Sigma^p $ absolutely converges in the space $\mathcal{L}_c(h^{2m}_{\omega, r}(\N, \C), H^m_0((0,T),\R))$. Therefore, passing to the limits [$n \rightarrow +\infty$] in the equality \eqref{hyp_rec}, we get, 
\begin{equation*}
L_N \circ M_N=\Id \quad \text{ with } \quad M_N:=\sum \limits_{p=0}^{+\infty}  (-1)^p P_N \circ \Sigma^p,
\end{equation*}
which is continuous from $h^{2m}_{\omega, r}(\N, \C)$ to $H^m_0((0,T),\R)$ for all $m=0, \ldots, k$.
\end{proof}
Now, from \cref{inv}, one can prove the solvability of a moment problem with simultaneous estimates on the control for high frequencies. 
\begin{prop}
\label{thm_mom_hf}
Let $T>0$, $k \in \N$ and $(\omega_j)_{j \in \N}$ an increasing sequence of $[0, +\infty)$ such that $\omega_0=0$ and satisfying \eqref{AsymptGapPoly}. There exists an integer $N \in \N^*$, a constant $C>0$ and a continuous linear map $\L^{N, T}_{\hf} : h^{2k}_{\omega,r}(\N,\C) \rightarrow H^k_0((0,T), \R)$ such that for every sequence $d= (d_j)_{j \in \N}\in h^{2k}_{\omega,r}(\N,\C) $, the control $u:=\L^{N, T}_{\hf}(d) \in H^k_0( (0,T), \R)$ satisfies the moment problem  
\begin{equation}
\label{mom_hf}
\int_0^T u(t) e^{i \omega_j t} dt = d_j, \quad \forall j \geq N,
\end{equation}
and the following estimates
\begin{equation}
\label{estim_hf}
\| u \|_{H^m_0(0,T)} \ioe C  \left( \sum \limits_{j=N}^{+\infty} \left| \omega_j^{m} d_j \right|^2\right)^{1/2}, \quad  \forall m=0, \ldots, k. 
\end{equation}
\end{prop}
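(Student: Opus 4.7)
The plan is to combine \cref{inv} with the observation that the Neumann-series operator $M_N$ produced there is a single operator, not a family depending on $m$. Given $d = (d_j)_{j \in \N} \in h^{2k}_{\omega, r}(\N, \C)$, I would set $\L^{N,T}_{\hf}(d) := M_N\bigl((d_j)_{j \soe N}\bigr)$ for some fixed $N \soe N_1$, with $N_1$ coming from \cref{inv} applied with $k$ replaced by $\max(k,1)$. Since $h^{2k}_{\omega, r} \subset h^{2m}_{\omega, r}$ for every $m \ioe k$, the tail $(d_j)_{j \soe N}$ lies in every relevant space, so the definition is meaningful uniformly in $m$.

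The moment identity \eqref{mom_hf} is then nothing more than reading coordinate by coordinate the equality $L_N^0 \circ M_N\bigl((d_j)_{j \soe N}\bigr) = (d_j)_{j \soe N}$, which holds by the right-inverse property granted by \cref{inv}.

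For the simultaneous estimates \eqref{estim_hf}, the key point is that the same underlying operator $M_N$ is continuous from $h^{2m}_{\omega, r}$ into $H^m_0((0,T), \R)$ for every $m \in \{0, \ldots, k\}$, because the Neumann-series construction $M_N = \sum_p (-1)^p P_N \circ \Sigma^p$ does not depend on $m$ (the bound $\|\Sigma\|_{h^{2m}, h^{2m}} \ioe 1/2$ holds uniformly in $m \ioe k$ thanks to \cref{almost_inv}). Denoting by $C_m$ the operator norm of $M_N^m$ and setting $C := \max_{0 \ioe m \ioe k} C_m$, one obtains directly
\[
\|u\|_{H^m_0(0,T)} \ioe C\,\|(d_j)_{j \soe N}\|_{h^{2m}_{\omega, r}} = C \Bigl( \sum_{j=N}^{+\infty} |\omega_j^m d_j|^2 \Bigr)^{1/2},
\]
for $u := \L^{N,T}_{\hf}(d)$, which is exactly \eqref{estim_hf}.

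There should be no substantive obstacle: the bulk of the work has already been carried out in \cref{almost_inv,inv}, where the polynomial asymptotic gap \eqref{AsymptGapPoly} is used to make the off-diagonal inner products $\int_0^T \widetilde{\xi_j}(t) e^{i\omega_p t}\,dt$ small enough for the Neumann series to converge in each $h^{2m}$ simultaneously. The current proposition is essentially a repackaging of \cref{inv} recasting its content as the solvability of the high-frequency moment problem with simultaneous Sobolev estimates on the control.
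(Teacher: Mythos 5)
Your proof is correct and follows exactly the paper's own argument: both define $\L^{N,T}_{\hf}$ as (the restriction of) the Neumann-series operator $M_N$ from \cref{inv}, read off the moment identity from $L_N \circ M_N = \Id$, and get the simultaneous estimates from the fact that \cref{inv} already asserts continuity of $M_N$ from $h^{2m}_{\omega,r}$ to $H^m_0$ for every $m \ioe k$. Your extra remarks about replacing $k$ by $\max(k,1)$ when invoking \cref{inv} and about passing to the tail $(d_j)_{j \soe N}$ are harmless clarifications of points the paper leaves implicit.
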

\begin{proof}[Proof of \cref{thm_mom_hf}.]
Let $k \in \N$ and $T>0$. Let $N_1 \in \N$ as in \cref{inv} and $N \soe N_1$. The proof follows with $\L^{N, T}_{\hf}:=M_N$ where $M_N$ is defined in \cref{inv}.
Indeed, for all $d \in h_{\omega, r}^{2k}(\N, \C)$, as $L_N \circ M_N = \Id$, the function $u:=M_N(d) \in H^k_0( (0,T), \R)$ satisfies the moment problem \eqref{mom_hf}. And the estimates \eqref{estim_hf} hold by continuity of $M_N$ from $h^{2m}_{\omega, r}(\N,\C)$ to $H^m_0((0,T),\R)$, for every $m=0, \ldots, k$.
\end{proof}
Now, from \cref{thm_mom_fini} dealing with low frequencies and \cref{thm_mom_hf} dealing with high frequencies, one can prove the main result. 
\begin{thm}
\label{thm_mom}
Let $T>0$, $k \in \N$, $n \in \N^*$ and $(\omega_j)_{j \in \N}$ an increasing sequence of $[0, +\infty)$ such that $\omega_0=0$ and satisfying \eqref{AsymptGapPoly}. There exists a constant $C>0$ and a continuous linear map $\L^T_k : \R^n \times h^{2k}_{\omega,r}(\N, \C) \rightarrow H^k_0((0,T), \R)$ such that for every sequence $d=\left( (d_{-q})_{q=1, \ldots, n}, \ (d_j)_{j \in \mathbb{N}} \right) \in \R^n \times h^{2k}_{\omega,r}( \mathbb{N} , \mathbb{C})$, the control $u:=\L^T_k(d) \in H^k_0( (0,T), \R)$ satisfies the moment problem 
\begin{equation}
\label{moment}
\forall j \in \mathbb{N}, \int_0^T u(t) e^{i \omega_j t}dt = d_j
\quad
\text{ and }
\quad
\forall q=1, \ldots, n, \int_0^T t^q u(t) dt= d_{-q},
\end{equation}
with the following size estimates,
\begin{equation}
\label{estim_moment}
\| u \|_{H^m_0(0,T)} \ioe C \left(
\sum \limits_{j=-n}^{+\infty} \left| \left( \delta_{j,0} + \omega_j^m \right) d_j \right|^2
\right)^{1/2}, \quad  \forall  m=0, \ldots, k. 
\end{equation}
\end{thm}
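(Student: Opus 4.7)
My plan is to decompose the moment problem into a high-frequency part and a low-frequency/polynomial part, handling them with Propositions \ref{thm_mom_hf} and \ref{thm_mom_fini} respectively, and then assemble an affine combination. Since Proposition \ref{thm_mom_fini} provides \emph{simultaneous} estimates in $H^m_0$ for $m=0,\ldots,k$, and Proposition \ref{thm_mom_hf} likewise provides simultaneous estimates, the strategy preserves the simultaneity property.

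\textbf{Step 1 (high frequencies).} Fix $N$ as in Proposition \ref{thm_mom_hf}, and set
\[
u_{\hf} := \L^{N,T}_{\hf}\bigl((d_j \1_{j \soe N})_{j \in \N}\bigr) \in H^k_0((0,T),\R).
\]
By construction, $\int_0^T u_{\hf}(t) e^{i\omega_j t}dt = d_j$ for every $j \soe N$, and for each $m=0,\ldots,k$,
\[
\|u_{\hf}\|_{H^m_0(0,T)} \ioe C \Bigl( \sum_{j=N}^{+\infty} |\omega_j^m d_j|^2 \Bigr)^{1/2}.
\]

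\textbf{Step 2 (residuals).} For $j=0,\ldots,N-1$ set $e_j := \int_0^T u_{\hf}(t) e^{i\omega_j t}dt$, and for $q=1,\ldots,n$ set $e_{-q} := \int_0^T t^q u_{\hf}(t) dt$. By Cauchy--Schwarz in time and the $m=0$ bound from Step 1,
\[
|e_j| + \sum_{q=1}^n |e_{-q}| \ioe C(T,n) \|u_{\hf}\|_{L^2(0,T)} \ioe C(T,n) \Bigl( \sum_{j=N}^{+\infty} |d_j|^2 \Bigr)^{1/2},
\]
so the residuals are dominated by the $l^2$-norm of the high-frequency tail.

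\textbf{Step 3 (low frequencies and polynomial moments).} Apply Proposition \ref{thm_mom_fini} to produce
\[
u_{\lf} := \L^{N,T}_{\lf}\bigl((d_{-q}-e_{-q})_{q=1,\ldots,n}, \ (d_j-e_j)_{j=0,\ldots,N-1}\bigr).
\]
This control solves the residual low-frequency moments, vanishes against $e^{i\omega_j\cdot}$ for $j \soe N$ (so it does not disturb Step 1), and enjoys the simultaneous estimate
\[
\|u_{\lf}\|_{H^m_0(0,T)} \ioe C_N \Bigl( \sum_{q=1}^n |d_{-q}-e_{-q}|^2 + \sum_{j=0}^{N-1} |(\delta_{j,0}+\omega_j^m)(d_j-e_j)|^2 \Bigr)^{1/2}
\]
for all $m=0,\ldots,k$. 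Setting $u := u_{\hf} + u_{\lf}$ and $\L^T_k := (d \mapsto u)$ yields a linear continuous map; verifying \eqref{moment} is immediate from Steps 1 and 3. For the size bound \eqref{estim_moment}, combine the estimates of Steps 1 and 3, absorbing $|e_j|$ and $|e_{-q}|$ via the bound of Step 2 (the factor $(\delta_{j,0}+\omega_j^m)$ for $j<N$ is bounded by a constant depending only on $N$ and $m$).

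The only genuinely delicate point is Step 2: one must check that the low-frequency corrections $e_j, e_{-q}$, which are controlled only by $\|u_{\hf}\|_{L^2}$, do not inflate the $H^m_0$ estimate of $u_{\lf}$ beyond what is allowed by \eqref{estim_moment}. This works precisely because in the finite-dimensional bound of Proposition \ref{thm_mom_fini} the constant $C_N$ depends on $N$ but the number of terms is finite, and each $|e_j|, |e_{-q}|$ is already controlled by the $l^2$-tail $(\sum_{j\soe N}|d_j|^2)^{1/2}$, which is itself bounded by $\|d\|_{h^{2m}_{\omega,r}}$.
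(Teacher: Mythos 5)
Your proposal is correct and follows essentially the same route as the paper's proof: decompose via $\L^{N,T}_{\hf}$ for high frequencies and $\L^{N,T}_{\lf}$ for the finitely many remaining exponential and polynomial moments, then absorb the residual corrections using Cauchy--Schwarz in time and the continuous embedding $h^{2m}_{\omega,r}(\N,\C) \subset l^2_r(\N,\C)$. The only cosmetic difference is that you feed $\L^{N,T}_{\hf}$ the truncated sequence $(d_j\1_{j\soe N})$ rather than $(d_j)_{j\in\N}$, but since that operator depends only on the components with $j \soe N$ this changes nothing.
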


\begin{proof}[Proof of \cref{thm_mom}.]
Let $T>0$, $k \in \N$, $n \in \N^*$. Let $(d_{-q})_{q=1, \ldots,n} \in \R^n$ and $(d_j)_{j \in \mathbb{N}} \in h^{2k}_{\omega, r}( \mathbb{N} , \mathbb{C})$.
The proof follows with
\begin{multline*}
u
=
\L^T_k
\left( 
(d_{-q})_{q=1, \ldots,n}
,
(d_j)_{j \in \N} 
\right)
:=
\L^{N, T}_{\hf} 
( (d_j)_{j \in \N} )
\\+
\L^{N, T}_{\lf} 
\left(
\left(
d_{-q}
-
\int_0^T 
t^q
\L^{N, T}_{\hf} 
( d_j )(t)
dt
\right)_{q=1, \ldots, ,n}
,
\left(
d_{j}
-
\int_0^T 
\L^{N, T}_{\hf} 
( d_j )(t)
e^{i \omega_j t}
dt
\right)_{j=0, \ldots, N-1}
\right).
\end{multline*}
Indeed, by linearity and by construction of the operators $\L_{\hf}^{N, T}$ and $\L_{\lf}^{N, T}$ (given respectively in \cref{thm_mom_hf} and \cref{thm_mom_fini}, the control $u$ satisfies the moment problem \eqref{moment}. Furthermore, by \eqref{estim_moment_finie} and \eqref{estim_hf}, there exists a constant $C>0$ such that for all $m=0, \ldots, k$, 
\begin{equation}
\label{eq2}
\| u \|_{H^m_0(0,T)} 
\ioe 
C 
\left( 
\left( 
\sum \limits_{j=N}^{+\infty} 
\left|
\omega_j^m 
d_j  
\right|^2
\right)^{1/2} 
+ 
\left( 
\sum \limits_{j=-n}^{N-1} 
\left| 
\left( 
\delta_{j,0}+{\omega_j}^{m} 
\right) 
\left(
\widetilde{d}_j -d_j
\right)
\right|^2 
\right)^{1/2} 
\right). 
\end{equation}
where, if $v:=\L^{N, T}_{\hf} 
( (d_j)_{j \in \N} )$,
\begin{equation*}
\forall j=0, \ldots, N-1, \ \tild{d}_j:= \int_0^T v(t) e^{i \omega_j t} dt
\quad
\text{ and }
\quad
\forall q=1, \ldots, n, \ \widetilde{d}_{-q}:=\int_0^T t^q v(t)dt.
\end{equation*}
Besides, using Cauchy-Schwarz inequality and the size estimate \eqref{estim_hf} on $v$ (for $m=0$), we get, for all $j=0, \ldots , N-1$, for all $m=0, \ldots, k,$
\begin{equation*}
| \widetilde{d}_j |
= \left| \int_0^T v(t) e^{i \omega_jt} dt \right| \\
\ioe \sqrt{T} \| v \|_{L^2(0,T)} \\
\ioe C  \left( \sum \limits_{j=N}^{+\infty} \left| d_j \right|^2 \right)^{1/2} \\
\ioe C  \left( \sum \limits_{j=-n}^{+\infty} \left| \left( \delta_{j,0}+{\omega_j}^{m} \right)  d_j  \right|^2 \right)^{1/2},
\end{equation*}
as the injection $h^{2m}_{\omega,r}(\N,\C) \subset l^2_r(\N,\C)$ is continuous. The same estimates can be proved on $(\tild{d}_{-q})_{q=1, \ldots, n}$. Together with \eqref{eq2}, this gives \eqref{estim_moment}. 
\end{proof}
%
Finally, from \cref{thm_mom},  one can deduce the solvability of moment problem with estimates on the function, some of its derivatives but also some of its primitives. 
\begin{thm}
\label{thm_mom_weak_estim}
Let $T>0$, $k \in \mathbb{N}^*$ and $(\omega_j)_{j \in \mathbb{N}}$ an increasing sequence of $[0, +\infty)$ such that $\omega_0=0$ and satisfying \eqref{AsymptGapPoly}. There exists a constant $C>0$ and a continuous linear map $\mathcal{M}^T_k : h^{2k}_{\omega, r}(\N,\C) \rightarrow H^k_0( (0,T), \R)$ such that for every $d=(d_j)_{j \in \mathbb{N}} \in h^{2k}_{\omega, r}( \mathbb{N}, \mathbb{C})$, the control $u:= \mathcal{M}^T_k(d) \in H^k_0( (0,T), \R)$ satisfies the moment problem
\begin{equation}
\label{mom_u}
\int_0^T u(t) e^{i \omega_j t} dt = d_j, \quad \forall j \in \mathbb{N}, 
\end{equation} 
with the boundary conditions 
\begin{equation}
\label{weak_bc_u}
u_2(T)=u_3(T)=\ldots=u_{k+1}(T)=0,
\end{equation} 
and the following size estimates,
\begin{equation}
\label{estim_u}
\| u \|_{H^m_0(0,T)} \ioe C \| d \|_{h^{2m}_{\omega}(\N,\C)}, \quad \forall m=-(k+1), \ldots, 0, \ldots, k. 
\end{equation}
\end{thm}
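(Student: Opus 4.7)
My approach is to apply \cref{thm_mom} with a carefully chosen set of polynomial moment conditions. Given $d = (d_j)_{j \in \N} \in h^{2k}_{\omega,r}(\N,\C)$, I take $n = k$ and define
$$
\mathcal{M}_k^T(d) := \mathcal{L}_k^T\bigl( (T^q d_0)_{q=1,\ldots,k},\, (d_j)_{j \in \N} \bigr).
$$
By construction $u := \mathcal{M}_k^T(d) \in H^k_0((0,T),\R)$ satisfies \eqref{mom_u}, and the positive-order bounds in \eqref{estim_u} for $m = 0, \ldots, k$ follow immediately from \eqref{estim_moment}, the polynomial-moment contributions $|d_{-q}| = T^q |d_0|$ being absorbed into the $j=0$ term of $\|d\|_{h^{2m}_\omega}$. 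For the boundary conditions \eqref{weak_bc_u}, iterating integration by parts on $\int_0^T t^q u$, using $u_\ell(0) = 0$ for all $\ell \geq 1$, gives
$$
\int_0^T t^q u(t)\,dt = \sum_{\ell=1}^{q+1} (-1)^{\ell-1} \frac{q!}{(q-\ell+1)!}\, T^{q-\ell+1}\, u_\ell(T), \qquad q \in \N^*.
$$
Reading the cases $q = 1, \ldots, k$ as a linear system in $(u_\ell(T))_{\ell=2,\ldots,k+1}$---given $u_1(T) = d_0$ from the $j=0$ moment---the coefficient of $u_{q+1}(T)$ in the $q$-th equation is $(-1)^q q! \neq 0$, so the system is upper-triangular with nonzero diagonal, hence uniquely solvable. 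Since the "all zero" choice $u_\ell(T) = 0$ for $\ell \geq 2$ reproduces exactly the imposed values $T^q d_0$, uniqueness forces \eqref{weak_bc_u}.

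For the negative-order estimates, fix $m \in \{1,\ldots,k+1\}$. By \eqref{def_norm_faible}, $\|u\|_{H^{-m}} = |u_1(T)| + \|u_m\|_{L^2}$, and the first term equals $|d_0| \leq \|d\|_{h^{-2m}_\omega}$. Iterating integration by parts in $\int_0^T u(t) e^{i\omega_j t}\,dt$, using $u_\ell(0) = 0$ for $\ell \geq 1$ together with the boundary conditions $u_1(T) = d_0$ and $u_\ell(T) = 0$ for $\ell = 2, \ldots, m$, yields for every $j \geq 1$
$$
\int_0^T u_m(t)\, e^{i\omega_j t}\, dt = \frac{d_j - e^{i\omega_j T}\, d_0}{(-i\omega_j)^m}.
$$
To transform this coefficient-level identity into an $L^2$-bound on $u_m$, I would unpack the explicit construction of $\mathcal{M}_k^T$ through the decomposition $\mathcal{L}_k^T = \mathcal{L}_{\hf}^{N,T} + \mathcal{L}_{\lf}^{N,T}$: for the high-frequency piece built on $\tilde\xi_j(t) = T^{-1} e^{-i\omega_j t} \chi(t)$, integration by parts against the compactly supported $\chi$ shows that its $m$-fold primitive has $L^2$-norm of order $\omega_j^{-m}$, so the Neumann-series arguments of \cref{almost_inv}--\cref{inv} can be rerun with weights $\omega_j^{-m}$ to yield the required $H^{-m}$-continuity of the high-frequency solver. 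The finite-dimensional low-frequency piece is controlled by equivalence of norms.

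The step I expect to be the main obstacle is this last one, namely promoting the Fourier-coefficient bounds to genuine $L^2$-control on $u_m$ matching the norm $\|d\|_{h^{-2m}_\omega}$. A direct Riesz-basis expansion of $u_m$ in $(e^{i\omega_j t})_{j \in \Z}$ is insufficient because this family need not be complete in $L^2(0,T)$, forcing one to go through the explicit high/low-frequency construction. Moreover, the extremal case $m = k+1$ is delicate: the zero-frequency coefficient $\int_0^T u_{k+1}\,dt = u_{k+2}(T)$ is not pinned down by \eqref{weak_bc_u}, so a dedicated estimate of this residual---e.g.\ via $u_{k+2}(T) = -\int_0^T s\, u_k(s)\,ds$ and the already-proven $m = k$ case of the theorem---is needed to close the argument at the endpoint.
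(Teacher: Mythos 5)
Your construction $\mathcal{M}_k^T(d) := \mathcal{L}_k^T\bigl((T^q d_0)_{q=1,\ldots,k}, (d_j)_{j\in\N}\bigr)$ correctly produces a control solving the moment problem \eqref{mom_u}, and your arguments for the boundary conditions \eqref{weak_bc_u} (upper-triangular system in $(u_\ell(T))_{\ell\ge 2}$, solved by zero) and for the positive-order estimates $m = 0,\ldots,k$ are sound. However, there is a genuine gap in the negative-order estimates $m = -(k+1),\ldots,-1$, which is precisely the new content of the theorem relative to \cref{thm_mom}. The coefficient-level identity
\begin{equation*}
\int_0^T u_{m}(t)\, e^{i\omega_j t}\, dt = \frac{d_j - e^{i\omega_j T} d_0}{(-i\omega_j)^{m}}, \qquad j \ge 1,
\end{equation*}
does not give an $L^2$ bound on $u_{m}$ because $(e^{i\omega_j \cdot})_{j\in\Z}$ is only a Riesz sequence, not a basis, in $L^2(0,T)$: the Fourier coefficients do not control the component of $u_m$ orthogonal to the closed span of the exponentials. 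Your proposed remedy --- rerunning the Neumann-series machinery of \cref{almost_inv} and \cref{inv} with weights $\omega_j^{-m}$ --- is not a small modification: it would require redefining the operators $L_N$ and $P_N$ and reproving their near-inverse property on the negative-index spaces, and the low-frequency correction operator $\mathcal{L}^{N,T}_{\lf}$ would also need new negative-norm bounds, none of which you actually carry out. You correctly flag the additional obstruction that the zero-frequency content of $u_{k+1}$ is not pinned down, but this only sharpens the problem without resolving it.

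The paper sidesteps this entirely by a different decomposition: it defines the control as $v := f^{(k+1)}$ with $f := \mathcal{L}^T_{2k+1}\bigl((d_j/(-i\omega_j)^{k+1})\1_{j\ge 1}\bigr)_{j\in\N} \in H^{2k+1}_0$, so that for $m' = 0,\ldots,k+1$ the $m'$-th primitive of $v$ is simply $f^{(k+1-m')}$ (thanks to the vanishing of $f^{(\ell)}(0)$ for $\ell \le k$). The negative-norm estimates on $v$ then become ordinary positive-order estimates \eqref{estim_moment} on $f$, which \cref{thm_mom} already gives, because the rescaling of the coefficients by $\omega_j^{-(k+1)}$ trades the primitives for derivatives. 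A separate term $w := \mathcal{L}^T_k\bigl((T^q d_0)_q, (d_0\delta_{j,0})_j\bigr)$ fixes the $j = 0$ moment (which $v$ misses since $\int_0^T v = f^{(k)}(T) = 0$), and since $w$ depends only on $d_0$, all its norms --- including primitives, via Cauchy--Schwarz --- are trivially controlled by $|d_0|$. Your approach economizes on the correction step but loses the crucial ``derivative structure'' that makes the $H^{-m}$ estimates immediate; without it, the negative estimates remain unproven.
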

\begin{proof}[Proof of \cref{thm_mom_weak_estim}]
Let $(d_j)_{j \in \N}$ in $h^{2k}_{\omega, r}(\N,\C)$. The operator $\mathcal{M}_k^T$ is given by, 
\begin{equation*}
\mathcal{M}_k^T
\left( 
(d_j)_{j \in \N}
\right)
:=
D^{(k+1)} \circ 
\L_{2k+1}^T 
\left(
\frac{d_j}{ (-i\omega_j)^{k+1}} \1_{j \in \N^*}
\right)_{j \in \N}
+
\L_{k}^T
\left(
\left(
T^q d_0
\right)_{q=1, \ldots, k}
,
\left(
d_0 \delta_{j,0}
\right)_{j \in \N}
\right),
\end{equation*}
where $\L_{2k+1}^T$ and $\L_k^T$ are defined in \cref{thm_mom} and $D^{ (k+1)}$ is the differential operator $ u \mapsto u^{(k+1)}$.  

\medskip \noindent \emph{Step 1. Solving the moment problem except for the first moment.}
Indeed, if we denote by 
\begin{equation*}
v:=f^{(k+1)} \in H^k_0(0,T) \quad \text{ with } \quad 
f:= 
\L_{2k+1}^T 
\left(
\frac{d_j}{ (-i\omega_j)^{k+1}} \1_{j \in \N^*}
\right)_{j \in \N}
\in 
H^{2k+1}_0(0,T)
, 
\end{equation*}
then, performing $k+1$ integrations by parts, as $f^{(m)}$ has vanishing boundary terms for all $m=0, \ldots, k$, we get, for all $j \in \N^*$, 
\begin{equation}
\label{mom_v}
\int_0^T v(t) e^{i \omega_j t} dt = (-i \omega_j)^{k+1} \int_0^T f(t) e^{i \omega_j t} dt=d_j,
\end{equation}
by construction of $f$. However, for $j=1$, we get 
\begin{equation}
\label{mom_0_v}
\int_0^T v(t) dt = f^{(k)}(T)=0,
\end{equation}
and therefore, the first moment needs to be corrected in a second time. Moreover, by construction, we have the boundary conditions \eqref{weak_bc_u} on $v$
and from estimates \eqref{estim_moment} on $f$, if we denote by $u^{(m)}=u_{-m}$ when $m <0$, we deduce
\begin{align}
\label{estim_strong_v}
\| v \|_{H^m} &= \| v^{(m)}\|_{L^2} = \| f^{(k+1+m)} \|_{L^2} \ioe C \| c\|_{h^{2(k+1+m)}_{\omega,r}} \ioe C \| d\|_{h^{2m}_{\omega,r}}, \quad \forall m=-(k+1), \ldots, k. 
\end{align}

\medskip \noindent \emph{Step 2. Correcting the first component.}
If we denote by 
\begin{equation*}
w:=\L_{k}^T
\left(
\left(
T^q d_0
\right)_{q=1, \ldots, k}
,
\left(
d_0 \delta_{j,0}
\right)_{j \in \N}
\right)
\in 
H_0^k(0,T),
\end{equation*}
then by construction, together with \eqref{mom_v}-\eqref{mom_0_v}, the control $u:=v+w$ in $H^k_0(0,T)$ solves the moment problem \eqref{mom_u}. Moreover, by construction, $w$ solves the polynomial moment 
\begin{equation*}
\int_0^T t^q w(t) dt = T^q d_0, \quad \forall q=1, \ldots, k 
\end{equation*} 
and thus, by integration by parts, as $w_1(T)=d_0$ by construction, we get the boundary conditions \eqref{weak_bc_u} on $w$. Then, by linearity, \eqref{weak_bc_u} holds for $u$.
Moreover, by construction, $w$ satisfies the size estimates
\begin{equation}
\label{estim_strong_w}
\| w \|_{H^m(0,T)} \ioe C_T | d_0|, \quad \forall m=0, \ldots, k. 
\end{equation}
Thus, Cauchy-Schwarz inequality entails that
\begin{equation}
\label{estim_weak_w}
\| w_m\|_{L^2(0,T)} \ioe T^m \| w \|_{L^2(0,T)} \ioe T^m C_T | d_0|, \quad \forall m=1, \ldots, k+1.
\end{equation}
Finally, estimates \eqref{estim_strong_v} on $v$ and estimates \eqref{estim_strong_w}-\eqref{estim_weak_w} on $w$ gives all the estimates \eqref{estim_u} on $u$. 
\end{proof}
%
%
%
%
%
\section{Nonlinear control in projection with simultaneous estimates}
\label{main_theorem}
The goal of this section is the proof of \cref{thm:contr_lin_proj}. It relies on the controllability of the linearized system with simultaneous estimates, given in Subsection \ref{C1-regularity}, which is then propagated to the nonlinear system through the iterations of the inverse mapping theorem thanks to estimates on the linear approximation of the end-point map given in Subsection \ref{linear_approx}. In this section, if $\psi$ is in $\S$, $T_{\S} \psi$ the tangent space at $\psi$ of $\S$ and $\Pi_{\psi}$ the orthogonal projection on $T_{\S} \psi$, are respectively given by
\begin{equation*}
T_{\S} \psi := \{ \xi \in L^2( 0,1) ; \ \Re \langle \xi, \psi \rangle=0 \} \quad \text{ and } \quad \Pi_{\psi}( \xi):=\xi - \Re \langle \xi, \psi \rangle \psi.
\end{equation*}


\subsection{$C^1-$ regularity of the end-point map}
\label{C1-regularity}
Let $T>0$. We consider the end-point map defined by,
 \begin{equation}
 \label{def_theta_T_proj}
 \begin{array}{ccrcl}
\Theta_T & : & \S \cap H^{2(p+k)+3}_{(0)} \times H^k_0 & \to & \S \cap H^{2(p+k)+3}_{(0)} \times \left[ T_{\S} \psi_1(T) \cap \H \cap H^{2(p+k)+3}_{(0)} \right] \\
 & & (\psi_0, \ u) & \mapsto & \left(\psi_0, \  \Pi_{\psi_1(T)} \circ \P_{\J} \left[ \psi(T) \right] \right) \\
\end{array}
\end{equation}
where $\psi$ is the solution of
\begin{equation*}  
\left\{
    \begin{array}{ll}
        i \partial_t \psi(t,x) = - \partial^2_x \psi(t,x) -u(t)\mu(x)\psi(t,x),  \quad (t,x) \in (0,T) \times (0,1),\\
        \psi(t,0) = \psi(t,1)=0, \quad t \in (0,T), \\
        \psi(0,x) = \psi_0(x), \quad x \in (0,1).
    \end{array}
\right.
\end{equation*}
\begin{rem}
To prove \cref{thm:contr_lin_proj}, we want estimates \eqref{estim_contr_nl} on the control with respect to the final state but also to the initial data, that is why the initial data is added as an argument of the end-point map $\Theta_T$. Moreover, in the definition of $\Theta_T$, the end-point of the solution is composed with $\P_{\J}$ as we investigate exact controllability in projection. It is also composed with $\Pi_{\psi_1(T)}$ as only the imaginary part of the first component of the solution can be controlled (we gain back the real part of the first component as the solution lives in the $L^2(0,1)$-sphere $\S$).
\end{rem}
%
The $C^1$-regularity of this end-point map is given in the following proposition.
\begin{prop}
\label{diff_eq_proj}
Let $\mu \in H^{2(p+k)+3}( (0,1), \R)$ with $\mu^{(2n+1)}(0)=\mu^{(2n+1)}(1)=0$ for all $n=0, \ldots, p-1$. The map $\Theta_T$ defined  in \eqref{def_theta_T_proj} is $C^1$. Moreover, for every $(\psi_0, u)$ in $H^{2(p+k)+3}_{(0)}(0,1) \times H^k_0(0,T)$, the differential at $(\psi_0,u)$ is given by, 
\begin{equation*}
d \Theta_T (\psi_0,u) . ( \Psi_0, v)= \left(\Psi_0, \Pi_{\psi_1(T)} \circ \P_{\J} \left[ \Psi(T) \right] \right), \quad (\Psi_0,v) \in H^{2(p+k)+3}_{(0)}(0,1) \times H^k_0(0,T),
\end{equation*}
 where $\Psi$ is the solution of the linearized system around the trajectory $\left(u, \psi( \cdot; \ u, \psi_0)\right)$ given by
 \begin{equation}
 \label{diff_u_psi0_T0}
 \left\{
    \begin{array}{ll}
        i \partial_t \Psi(t,x) = - \partial^2_x \Psi(t,x) -u(t) \mu(x) \Psi(t,x)-v(t) \mu(x) \psi(t; \ u, \psi_0),\\
        \Psi(t,0) = \Psi(t,1)=0, \quad t \in (0,T), \\
        \Psi(0,x)=\Psi_0, \quad x \in (0,1),
    \end{array}
\right.  \end{equation}
\end{prop}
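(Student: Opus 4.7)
My plan is to reduce the statement to a $C^1$-regularity property for a raw end-point map, and then to rely on \cref{thm:well-posedness} both to define the candidate differential and to control the Taylor remainder. Since $\psi_0$ is the first output of $\Theta_T$ unchanged and since $\P_{\J}$ and $\Pi_{\psi_1(T)}$ are continuous linear on $H^{2(p+k)+3}_{(0)}$, it is enough to show that
\begin{equation*}
\Xi_T : (\psi_0, u) \in H^{2(p+k)+3}_{(0)}(0,1) \times H^k_0(0,T) \ \longmapsto \ \psi(T; u, \psi_0) \in H^{2(p+k)+3}_{(0)}(0,1)
\end{equation*}
is $C^1$; the constraint $\psi_0 \in \mathcal{S}$ is harmless because differentials extend naturally to the tangent bundle of $\mathcal{S}$. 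For the candidate differential at $(\psi_0, u)$ in direction $(\Psi_0, v)$, I view the linearized system \eqref{diff_u_psi0_T0} as an instance of \eqref{Schro_source_term} with control $u$, initial data $\Psi_0$, and source $f := v\mu\psi(\cdot; u, \psi_0)$. By the Leibniz rule, the algebra structure of $H^{2p+3}$, and the boundary assumptions on $\mu$, this source lies in $H^k_0((0,T), H^{2p+3} \cap H^{2p+1}_{(0)})$; so \cref{thm:well-posedness} and \cref{rem:point_final} provide a unique $\Psi \in C^k([0,T], H^{2p+3}_{(0)})$ with $\Psi(T) \in H^{2(p+k)+3}_{(0)}$, and show that $(\Psi_0, v) \mapsto \Psi(T)$ is continuous linear.

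To upgrade to Fréchet differentiability, I introduce the residual
\begin{equation*}
r_h := \psi(\cdot; u + hv, \psi_0 + h\Psi_0) - \psi(\cdot; u, \psi_0) - h\Psi.
\end{equation*}
Subtracting the three Schrödinger equations shows that $r_h$ itself solves \eqref{Schro_source_term} with zero initial data, control $u + hv$, and source $-hv\mu\bigl[\psi(\cdot; u+hv, \psi_0+h\Psi_0) - \psi(\cdot; u, \psi_0)\bigr]$. A first application of the Lipschitz estimate \eqref{estim_sol} to the difference of two nonlinear trajectories yields $\psi(\cdot; u+hv, \psi_0+h\Psi_0) - \psi(\cdot; u, \psi_0) = O(h)$ in $C^k([0,T], H^{2p+3}_{(0)})$. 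Injecting this into \eqref{estim_sol} applied to the equation for $r_h$, whose source is therefore of order $h^2$, delivers $r_h = o(h)$ in $C^k([0,T], H^{2p+3}_{(0)})$, and \cref{rem:point_final} upgrades this into $r_h(T) = o(h)$ in $H^{2(p+k)+3}_{(0)}$.

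The main obstacle is the regularity bookkeeping: the multiplication $\psi \mapsto \mu\psi$ does not preserve $H^{2(p+k)+3}_{(0)}$, so one must work throughout at level $C^k([0,T], H^{2p+3}_{(0)})$ and invoke the smoothing effect built into \cref{thm:well-posedness} to recover the top-regularity estimate at $t = T$. Continuity of the differential $(\psi_0, u) \mapsto d\Xi_T(\psi_0, u)$ then follows by the same strategy: when $(\psi_0^{(n)}, u^{(n)}) \to (\psi_0, u)$, the difference between the associated linearized solutions for a fixed direction solves a linear Schrödinger equation with vanishing source, so \eqref{estim_sol} applies directly. Composing with $\Pi_{\psi_1(T)} \circ \P_{\J}$, while keeping the identity on the $\psi_0$ component, yields both the $C^1$-regularity of $\Theta_T$ and the announced formula for its differential.
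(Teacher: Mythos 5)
Your proof reconstructs exactly the argument the paper itself relies on: the paper states that \cref{diff_eq_proj} is proved as in \cite[Propositions~3 and~6]{BL10}, substituting \cref{thm:well-posedness} and \cref{rem:point_final} for the well-posedness result used there, and your reduction to the raw end-point map, identification of the candidate differential via the inhomogeneous Cauchy problem, $o(h)$ estimate on the Taylor remainder from the Lipschitz bound \eqref{estim_sol}, and continuity of the differential are precisely those steps.

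One small bookkeeping slip worth flagging: with $r_h = \psi^h - \psi - h\Psi$ (writing $\psi^h := \psi(\cdot; u+hv, \psi_0+h\Psi_0)$), subtracting the three equations gives
\begin{equation*}
i\partial_t r_h = A r_h - u\,\mu\, r_h - h v\,\mu\,(\psi^h - \psi),
\end{equation*}
i.e.\ control $u$ with source $-hv\mu(\psi^h-\psi)$; alternatively, regrouping with control $u+hv$ produces the source $-h^2 v\mu\Psi$. You wrote control $u+hv$ paired with the source $-hv\mu(\psi^h-\psi)$, which mixes the two decompositions. Either consistent choice gives $r_h = O(h^2)$ in $C^k([0,T], H^{2p+3}_{(0)})$ and hence $r_h(T)=o(h)$ in $H^{2(p+k)+3}_{(0)}$ via \cref{rem:point_final}, so the conclusion is unaffected.
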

Notice that $\Theta_T$ is well-defined thanks to \cref{rem:point_final}. The proof of \cref{diff_eq_proj} is the same as \cite[Proposition 3, Proposition 6]{BL10} using \cref{thm:well-posedness} (and \cref{rem:point_final}) instead of their well-posedness result and thus is left to the reader.  

In the following proposition, we state that one can build a right inverse of $d \Theta_T(\varphi_1, 0)$ which is continuous simultaneously in several spaces, meaning that one can control the linearized equation of \eqref{Schrodinger} around the ground state with various estimates on the control. 
\begin{prop}
\label{inv_droite_con}
Let $\mu$ in $H^{2(p+k)+3}( (0,1), \R)$  with $\mu^{(2n+1)}(0)=\mu^{(2n+1)}(1)=0$ for all $n=0, \ldots, p-1$ and satisfying \eqref{hyp_mu} . Then, the linear map 
\begin{equation*}
d \Theta_T(\varphi_1, 0) : 
[T_{\S} \varphi_1 \cap H^{2(p+k)+3}_{(0)}] \times H^k_0 
  \rightarrow 
[T_{\S} \varphi_1 \cap H^{2(p+k)+3}_{(0)}] \times [T_{\S} \psi_1(T) \cap \H \cap H^{2(p+k)+3}_{(0)}]
\end{equation*}
has a continuous right inverse
\begin{equation*}
d \Theta_T (\varphi_1, 0)^{-1} :  
[T_{\S} \varphi_1  \cap H^{2(p+k)+3}_{(0)}] \times [T_{\S} \psi_1(T) \cap \H \cap H^{2(p+k)+3}_{(0)}]
\rightarrow  
[T_{\S} \varphi_1  \cap H^{2(p+k)+3}_{(0)}] \times H^k_0,
\end{equation*}
which satisfies that there exists a constant $C_T>0$ such that for all $\psi_0$ in $T_{\S} \varphi_1 \cap H^{2(p+k)+3}_{(0)}$ and $\psi_f$ in $T_{\S} \psi_1(T) \cap \H \cap H^{2(p+k)+3}_{(0)}$, the control $u \in H_0^k(0,T)$ defined by $(\psi_0, u):= d\Theta_T(\varphi_1, 0)^{-1} (\psi_0, \psi_f)$ satisfies the following boundary conditions
\begin{equation}
\label{bc}
u_2(T)=\ldots=u_{k+1}(T)=0
\end{equation}
and the following size estimates
\begin{equation}
\label{size_estimate}
\left\| u \right\|_{H^{m}_0(0,T)} 
\ioe 
C 
\left\| (\psi_0, \psi_f )\right\|_{H^{2(p+m)+3}_{(0)}(0,1) \times H^{2(p+m)+3}_{(0)}(0,1)}, \quad \forall m=-(k+1),\ldots,k.
\end{equation}
 \end{prop}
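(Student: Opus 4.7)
The plan is to reduce the inversion of $d\Theta_T(\varphi_1,0)$ to a trigonometric moment problem and then invoke \cref{thm_mom_weak_estim}. By \cref{diff_eq_proj}, for $(\Psi_0,u)$ in $[T_{\S}\varphi_1\cap H^{2(p+k)+3}_{(0)}]\times H^k_0$, one has $d\Theta_T(\varphi_1,0)(\Psi_0,u)=(\Psi_0,\Pi_{\psi_1(T)}\P_{\J}\Psi(T))$, where $\Psi$ solves the linearized equation
\begin{equation*}
i\partial_t\Psi=-\partial_x^2\Psi-u(t)\mu(x)\psi_1(t,x),\qquad \Psi(0,\cdot)=\Psi_0.
\end{equation*}
Duhamel's formula then yields, for each $j\in\N^*$,
\begin{equation*}
\langle\Psi(T),\varphi_j\rangle=e^{-i\lambda_j T}\langle\Psi_0,\varphi_j\rangle+i\langle\mu\varphi_1,\varphi_j\rangle e^{-i\lambda_j T}\int_0^T u(\tau) e^{i(\lambda_j-\lambda_1)\tau}\,d\tau.
\end{equation*}
Enumerating $J=\{j_0<j_1<\ldots\}$ and setting $\omega_l:=\lambda_{j_l}-\lambda_1$, the identity $\Pi_{\psi_1(T)}\P_{\J}\Psi(T)=\psi_f$ reduces to the trigonometric moment problem $\int_0^T u(\tau)e^{i\omega_l\tau}\,d\tau=d_l$, $l\soe 0$, with $d_l:=[e^{i\lambda_{j_l}T}\langle\psi_f,\varphi_{j_l}\rangle-\langle\Psi_0,\varphi_{j_l}\rangle]/[i\langle\mu\varphi_1,\varphi_{j_l}\rangle]$ when $j_l\neq 1$. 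For $j_l=1$, the projector $\Pi_{\psi_1(T)}$ retains only the imaginary part, yielding a \emph{real} $d_0$ obtained by dividing the corresponding imaginary part by $\langle\mu\varphi_1,\varphi_1\rangle\neq 0$ (which is guaranteed by \eqref{hyp_mu}); if $1\notin J$, we simply pad with $d_0:=0$.

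Next, I would verify the two hypotheses of \cref{thm_mom_weak_estim}. Since $j_l\soe l$, one has $\omega_{l+1}-\omega_l=(j_{l+1}^2-j_l^2)\pi^2\soe 2\pi^2 l$, so \eqref{AsymptGapPoly} holds with $\eps=1$. Combining $|\langle\mu\varphi_1,\varphi_{j_l}\rangle|^{-1}\ioe Cj_l^{2p+3}$ from \eqref{hyp_mu} with $\omega_l\sim j_l^2$, one obtains, for every $m\in\{-(k+1),\ldots,k\}$,
\begin{equation*}
\|d\|_{h^{2m}_{\omega}}^2\ioe C\sum_l j_l^{2(2(p+m)+3)}\bigl(|\langle\Psi_0,\varphi_{j_l}\rangle|^2+|\langle\psi_f,\varphi_{j_l}\rangle|^2\bigr)\ioe C\bigl(\|\Psi_0\|_{H^{2(p+m)+3}_{(0)}}^2+\|\psi_f\|_{H^{2(p+m)+3}_{(0)}}^2\bigr).
\end{equation*}
In particular $d\in h^{2k}_{\omega,r}$, and the linear map $(\Psi_0,\psi_f)\mapsto d$ is continuous simultaneously in all these pairs of weighted spaces.

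Finally, applying \cref{thm_mom_weak_estim} produces $u:=\mathcal{M}^T_k(d)\in H^k_0(0,T)$ that solves the moment problem, satisfies the boundary conditions \eqref{weak_bc_u}---which coincide with \eqref{bc}---and the simultaneous size estimates $\|u\|_{H^m_0}\ioe C\|d\|_{h^{2m}_{\omega}}$ for every $m\in\{-(k+1),\ldots,k\}$. Composing the linear maps $(\Psi_0,\psi_f)\mapsto d$ and $d\mapsto u$ and setting $d\Theta_T(\varphi_1,0)^{-1}(\Psi_0,\psi_f):=(\Psi_0,u)$ yields the desired continuous right inverse with the size estimates \eqref{size_estimate}. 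The only genuine subtlety is the singular status of the $j=1$ mode, where $\Pi_{\psi_1(T)}$ removes one real degree of freedom so that the associated moment is real rather than complex; once this is identified correctly, the rest reduces to a routine application of \eqref{hyp_mu} together with the moment-problem machinery developed in \cref{subsection:moment}.
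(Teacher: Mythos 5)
Your proof is correct and follows essentially the same strategy as the paper: reduce $\Pi_{\psi_1(T)}\P_{\J}\Psi(T)=\psi_f$ to a trigonometric moment problem, check \eqref{AsymptGapPoly} and estimate the data $d$ via \eqref{hyp_mu}, then apply \cref{thm_mom_weak_estim}. The one small difference is that the paper applies \cref{thm_mom_weak_estim} to the \emph{full} frequency sequence $\omega_j:=\lambda_{j+1}-\lambda_1$, $j\in\N$ (so that $\omega_0=0$ is automatic) and simply pads the data with zeros for $j+1\notin J$, whereas you enumerate only the frequencies indexed by $J$ and must prepend $\omega_0=0$ by hand when $1\notin J$; this should be stated explicitly since $\omega_0=0$ is a hypothesis of \cref{thm_mom_weak_estim}, but both conventions are valid.
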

The proof follows from the solvability of a trigonometric moment problem with simultaneous estimates given in \cref{thm_mom_weak_estim}.
\begin{proof}[Proof of \cref{inv_droite_con}]
By \cref{diff_eq_proj}, for all $u \in H_0^k(0,T)$ and $\psi_0 \in H^{2(p+k)+3}_{(0)}(0,1)$, 
$$d \Theta_T (\varphi_1,0). ( \psi_0, u)= \left(\psi_0, \P_{\J} \left[ \Psi(T) \right] \right),$$
where $\Psi$ is the solution of the linearized system \eqref{diff_u_psi0_T0} around $(u, \psi_1)$ and can be computed as
\begin{equation}
\label{expr_lin}
\Psi(T) 
=
\sum \limits_{j=1}^{+\infty} 
\langle \psi_0, \varphi_j \rangle 
\ \psi_j(T)
+ 
i
\sum \limits_{j=1}^{+\infty} 
\left(
\langle \mu \varphi_1, \varphi_j \rangle 
\int_{0}^T u(t) e^{i (\lambda_j - \lambda_1) t} dt 
\right)
\psi_j(T). 
\end{equation}
Let $(\psi_0, \psi_f) \in [T_{\S} \varphi_1 \cap H^{2(p+k)+3}_{(0)}] \times [T_{\S} \psi_1(T) \cap \H \cap H^{2(p+k)+3}_{(0)}]$. The equality $\P_{\J} \Psi(T)=\psi_f$ is then equivalent to the trigonometric moment problem 
\begin{equation}
\label{pb_mom_v}
\int_0^T u(t) e^{i (\lambda_j - \lambda_1) t} dt = \frac{ \langle \psi_f , \psi_j(T) \rangle- \langle \psi_0, \varphi_j \rangle}{i \langle \mu \varphi_1, \varphi_j \rangle}:=d_{j-1}(\psi_0, \psi_f), \quad \forall j \in J.  
\end{equation}
Applying \cref{thm_mom_weak_estim} with $(\omega_j:=\lambda_{j+1}-\lambda_1)_{j \in \N}$  which satisfies \eqref{AsymptGapPoly}, the proof of \cref{inv_droite_con} follows with
\begin{equation}
\label{def_right_inv}
d \Theta_T(\varphi_1,0)^{-1} (\psi_0, \psi_f):= \left( \psi_0, \mathcal{M}^T_k \left[ d(\psi_0, \psi_f) \right] \right),
\end{equation}
where $d(\psi_0, \psi_f) := \left( d_j(\psi_0, \psi_f) \mathbb{1}_{j \in J} \right)_{j \in \N}$ and $\mathcal{M}^T_k$ is defined in \cref{thm_mom_weak_estim}. 
Indeed, by construction, the control $u:= \mathcal{M}^T_k \left[ d(\psi_0, \psi_f) \right]$ satisfies the moment problem \eqref{pb_mom_v} after a shift in the indexes,
entailing that the function defined by \eqref{def_right_inv} is a right inverse of $d\Theta_T(\varphi_1,0)$. 
Finally, \cref{thm_mom_weak_estim} also gives that the control satisfies the boundary conditions \eqref{bc} and gives the existence of a constant $C_T>0$ (not depending on $\psi_0$ nor on $\psi_f$) such that, 
\begin{equation*}
\| u \|_{H^m_0(0,T)} \ioe C \| d \|_{h^{2m}(\N, \C)}, \quad \forall m=-(k+1), \dots, k. 
\end{equation*}
Yet, as the function $\mu$ satisfies the hypothesis $\eqref{hyp_mu}$, we get for all $m=-(k+1), \ldots, k$, 
\begin{align*}
\| u \|_{H^m_0(0,T)} &\ioe C \left\| \left( \langle \psi_f , \psi_j(T) \rangle - \langle \psi_0, \varphi_j \rangle \right)  \right\|_{h^{2(m+p)+3}(J, \C)} \\
&\ioe C \left( \| \psi_f \|_{H^{2(p+m)+3}_{(0)}(0,1)} + \|\psi_0\|_{H^{2(p+m)+3}_{(0)}(0,1)} \right).
\end{align*}
\end{proof}

\begin{rem}
The boundary conditions \eqref{bc} allow to ease the propagation of estimates \eqref{size_estimate} to the nonlinear dynamics (see in the following \cref{estim_rq_faible} where those boundary conditions are useful to quantify the error between the nonlinear and the linearized dynamics). Notice that one can't add the boundary condition $u_1(T)=0$ to \eqref{bc} as the term $u_1(T)$ drives the behavior of the first component of the linearized system (see \eqref{expr_lin}).  
\end{rem}

\subsection{Error estimates between the nonlinear and linearized dynamics}
\label{linear_approx}
The proof of \cref{thm:contr_lin_proj} relies on the inverse mapping theorem: the control steering the solution of the Schrödinger equation \eqref{Schrodinger} from $\Psi_0$ to $\Psi_f$ is constructed as the fixed point of the map
 \begin{equation*}
\Phi : (\psi_0, u)  \mapsto  (\psi_0,u) - d \Theta_T(\varphi_1,0)^{-1} . \left[ \Theta_T(\psi_0,u)- (\Psi_0, \Psi_f)\right].  
\end{equation*}
Notice that such function can be rewritten as, 
\begin{equation*}
\Phi \left( \psi_0, u \right) - (\varphi_1,0)
 = -  
 d \Theta_T(\varphi_1,0)^{-1} . \left[
 \Theta_T(\psi_0,u) -  d \Theta_T(\varphi_1,0).(\psi_0- \varphi_1, u) - (\Psi_0, \Psi_f)
 \right].
\end{equation*}
Therefore, estimates on the nonlinear control map $(\Psi_0, \Psi_f) \mapsto u$ are closely linked to estimates on the linear approximation of the end-point map $\Theta_T$, given in the following proposition. 
\begin{prop}
\label{estim_rq_faible}
Let $T>0$, $(p,k) \in \N^2$ with $p \soe k$, $\mu \in H^{2(p+k)+3}(0,1)$ with $\mu^{(2n+1)}(0)=\mu^{(2n+1)}(1)=0$ for all $n=0, \ldots, p-1$, $u \in H^k_0(0,T)$ and $\psi_0 \in H^{2(p+k)+3}_{(0)}(0,1)$. For every $R>0$, there exists a constant $C=C(T, \mu, R)>0$ such that if $\| u \|_{H^k_0(0,T)} < R$ and $u_2(T)=\ldots=u_{k+1}(T)=0$, then the following estimates hold
\begin{multline}
\label{estim_rest_quad}
\left\|
\Theta_T(\psi_0,u) - \Theta_T( \varphi_1, 0)- d \Theta_T(\varphi_1,0).(\psi_0- \varphi_1, u)
\right\|
_{H^{2(p+m)+3}_{(0)} \times H^{2(p+m)+3}_{(0)}}
\\ \ioe 
C
\|u \|_{H^m}
\left(
\| \psi_0 - \varphi_1 \|_{H^{2(p+k)+3}_{(0)}} 
+
\| u \|_{H^k_0}
\right)
,
\quad 
\forall m=-(k+1), \ldots, k.
\end{multline}
\end{prop}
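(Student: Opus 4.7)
The first component of the residual $\Theta_T(\psi_0, u) - \Theta_T(\varphi_1, 0) - d\Theta_T(\varphi_1, 0).(\psi_0 - \varphi_1, u)$ vanishes identically by \cref{diff_eq_proj}, while its second component reduces to $\Pi_{\psi_1(T)} \circ \P_{\J} [r(T)]$ where $r := \psi - \psi_1 - \Psi$. Since $\Pi_{\psi_1(T)}$ and $\P_{\J}$ are bounded on every $H^{2(p+m)+3}_{(0)}$, the problem reduces to estimating $\|r(T)\|_{H^{2(p+m)+3}_{(0)}}$. Setting $\eta := \psi - \psi_1$, direct subtraction of the Schrödinger equations satisfied by $\psi$, $\psi_1$ and $\Psi$ gives $i \partial_t r = -\partial_x^2 r - u\mu\eta$ with $r(0) = 0$, whence the Duhamel representation $r(T) = i \int_0^T e^{-iA(T-s)} u(s)\mu\eta(s)\,ds$. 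A parallel argument shows that $\eta$ solves Schrödinger with control $u$, initial datum $\psi_0 - \varphi_1$ and source $-u\mu\psi_1$, the latter being in $H^k_0((0,T), H^{2p+3} \cap H^{2p+1}_{(0)})$ by the boundary conditions on $\mu$ and the smoothness of $\psi_1$; \cref{thm:well-posedness} then yields
\[ \|\eta\|_{C^k([0,T], H^{2p+3}_{(0)})} \ioe C \big( \|\psi_0 - \varphi_1\|_{H^{2(p+k)+3}_{(0)}} + \|u\|_{H^k_0} \big). \]

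For $m \in \{0, \ldots, k\}$, I would view the equation for $r$ as Schrödinger with zero control, zero initial datum and source $-u\mu\eta$. Using $u \in H^k_0 \subset H^m_0$, Leibniz's rule together with the algebra structure of $H^{2p+3}$ and the boundary conditions on $\mu$ gives $\|u\mu\eta\|_{H^m_0((0,T), H^{2p+3} \cap H^{2p+1}_{(0)})} \ioe C \|u\|_{H^m_0} \|\eta\|_{C^k([0,T], H^{2p+3}_{(0)})}$, and \cref{thm:well-posedness} together with \cref{rem:point_final} applied with parameters $(p, m)$ yields \eqref{estim_rest_quad} for nonnegative $m$.

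For $m = -j$ with $j \in \{1, \ldots, k+1\}$, I would integrate the Duhamel formula by parts $j$ times in $s$ using $u = u_1'$, $u_\ell = u_{\ell+1}'$ and the identity $\partial_s[e^{-iA(T-s)} G(s)] = e^{-iA(T-s)}[iA G + G'](s)$. All boundary contributions at $s = 0$ vanish because $u_\ell(0) = 0$ for $\ell \soe 1$ (as $u \in H^k_0$), and the boundary contributions at $s = T$ with $\ell \in \{2, \ldots, k+1\}$ vanish by the hypothesis on $u$; only the first-step boundary term survives, yielding
\[ r(T) = i u_1(T)\,\mu\eta(T) + (-1)^{j-1} i \int_0^T u_j(s)\, e^{-iA(T-s)} \sum_{a+b=j} \binom{j}{a} (iA)^a \mu\, \partial_s^b \eta(s)\,ds. \]
For $b \ioe k$ each term is controlled in $H^{2(p-j)+3}_{(0)}$ by $\|\eta\|_{C^k([0,T], H^{2p+3}_{(0)})}$; the borderline index $b = k+1$ (which only arises when $j = k+1$) is reduced to terms with $b \ioe k$ by substituting $\partial_s \eta = -i A \eta - i u \mu \psi$ from the equation for $\eta$, trading one time derivative for a power of $A$ together with a factor of $u$. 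Estimating the boundary term via $\mu\eta(T) \in H^{2(p+k)+3}_{(0)}$ (using the final-time refinement of \cref{rem:point_final}) and the integral via Cauchy--Schwarz in time produces
\[ \|r(T)\|_{H^{2(p-j)+3}_{(0)}} \ioe C \big(|u_1(T)| + \|u_j\|_{L^2}\big) \big(\|\psi_0 - \varphi_1\|_{H^{2(p+k)+3}_{(0)}} + \|u\|_{H^k_0}\big), \]
which closes the argument in view of the definition \eqref{def_norm_faible} of $\|u\|_{H^{-j}}$.

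The delicate point of the argument is the negative-regularity case: one must keep track of the exact Sobolev space $H^{\cdot}_{(0)}$ in which each iterated product $A^a \mu\,\partial_s^b \eta$ with $a + b \ioe k+1$ lives, which relies both on the odd-derivative boundary conditions of $\mu$ (ensuring that multiplication by $\mu$ preserves the scale $H^{\cdot}_{(0)}$ at the relevant orders) and, for the top-order time derivative, on rewriting $\partial_s^{k+1} \eta$ through the equation itself; the hypothesis $p \soe k$ is what guarantees that every Sobolev index $2(p+m)+3$ appearing in these manipulations remains nonnegative.
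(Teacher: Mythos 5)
Your decomposition of the residual, the $j$-fold integration by parts using $u_\ell(0)=0$ and the hypothesis $u_2(T)=\cdots=u_{k+1}(T)=0$, the identification of $iu_1(T)\mu\eta(T)$ as the only surviving boundary contribution, and the treatment of $m\in\{0,\ldots,k\}$ via the source estimate all follow the paper's argument closely. There is, however, a genuine gap in the negative-$m$ case. You assert that for $b\ioe k$ each integrand $A^a\mu\,\partial_s^b\eta(s)$ (with $a=j-b$) is controlled in $H^{2(p-j)+3}_{(0)}$ by $\|\eta\|_{C^k([0,T],H^{2p+3}_{(0)})}$, attributing this to the odd-derivative boundary conditions on $\mu$. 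This is false for the top $A$-power $a=j$, $b=0$: since $\mu^{(2n+1)}$ vanishes at the endpoints only for $n\ioe p-1$, multiplication by $\mu$ preserves $H^{2p+1}_{(0)}$ but not $H^{2p+3}_{(0)}$, so $\mu\eta(s)$ lies only in $H^{2p+3}\cap H^{2p+1}_{(0)}$ and hence $A^j\bigl(\mu\eta(s)\bigr)$ lies in $H^{2(p-j)+3}\cap H^{2(p-j)+1}_{(0)}$. This is strictly larger than $H^{2(p-j)+3}_{(0)}$, and when $p=k$, $j=k+1$ it is merely $H^1$ with no $(0)$-structure at all. The naive isometry-plus-Cauchy--Schwarz estimate you intend then does not apply to this term.

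What is missing is the hidden smoothing effect of \cref{estim_G_C0} (extended to the borderline case $p'=-1$): although $A^j\mu\eta(s)$ fails to lie in $H^{2(p-j)+3}_{(0)}$ pointwise in $s$, the Duhamel integral $\int_0^T u_j(s)\,e^{-iA(T-s)}A^j\mu\eta(s)\,ds$ does land in $H^{2(p-j)+3}_{(0)}$, with a bound in terms of $\|u_j\|_{L^2}$ and $\|\mu\eta\|_{L^2((0,T),H^{2p+3}\cap H^{2p+1}_{(0)})}$. The paper invokes exactly this to handle the $A^{k+1}$ term (in the passage unfortunately labelled ``the term $n=0$'' where ``$n=k+1$'' is meant), and without it the estimate for that term does not close. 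Two smaller imprecisions: $\mu\eta(T)$ is not in $H^{2(p+k)+3}_{(0)}$ as you write for the boundary term, but only in $H^{2(p+k)+3}\cap H^{2(p+k)+1}_{(0)}$ --- harmless since the embedding into the weaker target $H^{2(p-k)+1}_{(0)}$ still holds --- and the equation for $\eta$ gives $\partial_s\eta=-iA\eta+iu\mu\psi$ (sign on the source term). Your substitution for the top time derivative $b=k+1$ is a correct inline version of what the paper proves separately in \cref{estim_rest_lin}, but it does not touch the top spatial derivative, which is where \cref{estim_G_C0} is indispensable.
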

\begin{rem}
The assumption $p \soe k$ appears here to make sure that all the spaces $H^s_{(0)}(0,1)$ involved are positive Sobolev spaces. 
\end{rem}
To prove \cref{estim_rq_faible}, we first show estimates on the solution of the Schrödinger equation in regular spaces. 
\begin{prop}
\label{estim_rest_lin}
Let $T>0$, $(p,k) \in \N^2$, $\mu \in H^{2(p+k)+3}( (0,1), \R)$ with $\mu^{(2n+1)}(0)=\mu^{(2n+1)}(1)=0$ for all $n=0, \ldots, p-1$, $u \in H^{k}_0( (0,T), \R)$, $\psi_0 \in H^{2(p+k)+3}_{(0)}(0,1)$. Then, if $\psi:=\psi( \cdot; \ u, \psi_0)$, 
\begin{equation*}
\psi - \psi_1 \in C^k( [0,T], H^{2p+3}_{(0)}(0,1)) \cap H^{k+1}( (0,T), H^{2p+1}_{(0)}(0,1)). 
\end{equation*}
Moreover, for all $R>0$, there exists a constant $C=C(T, \mu, R)>0$ such that if $\| u \|_{H^k_0(0,T)} < R$ then the following estimates hold
\begin{align}
\label{estim_der_n}
\left\| 
\partial_t^{n} \left( \psi-\psi_1 \right) 
\right\|_{C^0( [0,T], H^{2p+3}_{(0)})} 
&\ioe 
C 
\left(
\| \psi_0 - \varphi_1 \|_{H^{2(p+n)+3}_{(0)}} +
\| u \|_{H^{n}}
\right),
\quad \forall n=0, \ldots, k,
\\
\label{estim_der_k+1}
\left\| 
\partial_t^{k+1} \left( \psi-\psi_1 \right)
\right\|_{L^2( (0,T), H^{2p+1}_{(0)})} 
&\ioe 
C 
\left(
\| \psi_0 - \varphi_1 \|_{H^{2(p+k)+3}_{(0)}} +
\| u \|_{H^{k}}
\right).
\end{align}
\end{prop}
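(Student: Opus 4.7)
The strategy is to read both conclusions off from \cref{thm:well-posedness} applied to the difference $\delta := \psi - \psi_1$, which, since $\psi_1$ solves the free Schrödinger equation, satisfies
\begin{equation*}
i \partial_t \delta = A \delta - u(t)\mu \delta(t) - f(t), \qquad \delta(0) = \psi_0 - \varphi_1,
\end{equation*}
with source $f(t,x) := u(t) e^{-i\lambda_1 t} \mu(x) \varphi_1(x)$, an instance of \eqref{Schro_source_term}. The preparatory check is that $\mu \varphi_1 \in H^{2p+3} \cap H^{2p+1}_{(0)}$: the first inclusion uses the algebra structure of $H^{2p+3}(0,1)$; the second reduces, via a short Leibniz expansion and the fact that $\varphi_1^{(2\ell)}$ vanishes at $0$ and $1$ for every $\ell$, to the hypothesis $\mu^{(2n+1)}(0)=\mu^{(2n+1)}(1)=0$ for $n = 0, \ldots, p-1$. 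Hence $f \in H^n_0((0,T), H^{2p+3} \cap H^{2p+1}_{(0)})$ with norm $\leq C_\mu \|u\|_{H^n}$ for every $n = 0, \ldots, k$.

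\noindent To obtain \eqref{estim_der_n}, I would apply \cref{thm:well-posedness} to this $\delta$ equation with the pair $(p, n)$ in place of $(p, k)$, for each $n = 0, \ldots, k$. All hypotheses are inherited from ours: $\mu \in H^{2(p+n)+3}$ with the required boundary conditions, $\psi_0 - \varphi_1 \in H^{2(p+n)+3}_{(0)}$, and $u \in H^n_0$ with $\|u\|_{H^n_0} \leq \|u\|_{H^k_0} < R$. The estimate \eqref{estim_sol} then yields $\delta \in C^n([0,T], H^{2p+3}_{(0)})$ with norm bounded by $C\bigl(\|\psi_0 - \varphi_1\|_{H^{2(p+n)+3}_{(0)}} + \|u\|_{H^n}\bigr)$, which is exactly \eqref{estim_der_n} after taking the $C^0$-norm of $\partial_t^n \delta$.

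\noindent For \eqref{estim_der_k+1}, I would differentiate the equation for $\delta$ in time $k$ times to get
\begin{equation*}
\partial_t^{k+1}\delta = -i A \partial_t^k \delta + i \partial_t^k(u \mu \psi),
\end{equation*}
then bound each side in $L^2((0,T), H^{2p+1}_{(0)})$. The first right-hand term is controlled in $C^0([0,T], H^{2p+1}_{(0)})$ by $\|\partial_t^k \delta\|_{C^0([0,T], H^{2p+3}_{(0)})}$, which is \eqref{estim_der_n} at $n = k$. For the second, split $\psi = \psi_1 + \delta$ and expand by Leibniz. Each term $u^{(j)} \mu \partial_t^{k-j}(\psi_1 + \delta)$ has a temporal factor in $L^\infty$ if $j < k$ (by the Sobolev embedding $H^{k-j} \subset C^0$) or in $L^2$ if $j = k$, while $\mu \partial_t^{k-j}(\psi_1 + \delta)$ lies in $C^0([0,T], H^{2p+3} \cap H^{2p+1}_{(0)})$ by the same algebra-plus-boundary argument as in the first paragraph (using that $\partial_t^{k-j}\delta$ belongs to $C^0([0,T], H^{2p+3}_{(0)})$ by \eqref{estim_der_n}). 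Summing, and absorbing one factor of $\|u\|_{H^k_0} < R$ into the constant, yields \eqref{estim_der_k+1}.

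\noindent The main technical point is the preparatory boundary verification that the products $\mu \varphi_1$ and $\mu \delta(t)$ live in $H^{2p+1}_{(0)}$ rather than merely in $H^{2p+3}$, since \cref{thm:well-posedness} demands precisely this hybrid spatial regularity on its source. This is exactly the role of the hypothesis $\mu^{(2n+1)}(0)=\mu^{(2n+1)}(1)=0$; once it is secured, the remainder of the argument is a mechanical combination of the well-posedness theorem and Leibniz's rule.
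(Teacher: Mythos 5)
Your proposal is correct and follows essentially the same strategy as the paper: identify the Cauchy problem satisfied by $\psi - \psi_1$, read \eqref{estim_der_n} off from \cref{thm:well-posedness} with each parameter $n = 0, \ldots, k$, and obtain \eqref{estim_der_k+1} by differentiating the equation $k$ times and bounding the Leibniz terms. The one cosmetic difference is in the source decomposition: you keep the bilinear term $u\mu(\psi-\psi_1)$ and take $f = u\mu\psi_1$ as an explicit, solution-independent source (so $\|f\|_{H^n} \leq C_\mu\|u\|_{H^n}$ is immediate), while the paper absorbs the whole $u\mu\psi$ into $f$ and then invokes a separate bound on $\psi$ from \cref{thm:well-posedness} to control it. Both are valid; your variant slightly streamlines the bookkeeping needed to make the constant depend only on $(T,\mu,R)$. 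Your preparatory verification that $\mu\varphi_1$ and $\mu\delta(t)$ land in $H^{2p+1}_{(0)}$ under the odd-derivative boundary conditions on $\mu$ is also exactly the role this hypothesis plays in the paper.
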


\begin{proof}
Let $\mu$ in $H^{2(p+k)+3}( (0,1), \R)$ with $\mu^{(2n+1)}(0)=\mu^{(2n+1)}(1)=0$ for all $n=0, \ldots, p-1$ and $\psi_0$ in $H^{2(p+k)+3}_{(0)}(0,1)$. Let $R>0$ and $u \in H^k_0(0,T)$ such that $\| u \|_{H^k_0(0,T)}<R$. First, as $\psi$ is the solution of \eqref{Schrodinger}, $\psi-\psi_1$ is the solution of the following Cauchy problem
\begin{equation}  
\label{Schro_rest_lin} 
\left\{
    \begin{array}{ll}
        i \partial_t \left( \psi-\psi_1 \right) = - \partial^2_x \left( \psi-\psi_1 \right) -u(t)\mu(x) \psi , \\
        \left( \psi-\psi_1 \right)(t,0) = \left( \psi-\psi_1 \right)(t,1)=0, \\
        \left( \psi-\psi_1 \right)(0,\cdot) = \psi_0-\varphi_1.
    \end{array}
\right.  
\end{equation}
By \cref{thm:well-posedness}, $\psi-\psi_1$, is in $C^k( [0,T], H^{2p+3}_{(0)}(0,1))$ and there exists $C=C(T, \mu, R)>0$ such that
\begin{equation*}
\| \psi- \psi_1 \|_{C^n( [0,T], H^{2p+3}_{(0)}(0,1))} 
\leq 
C 
\left(
\| \psi_0 - \varphi_1 \|_{H^{2(p+n)+3}_{(0)}(0,1)}
+
\| u  \|_{H^n(0,T)}
\right), \quad \forall n=0, \ldots, k,
\end{equation*}
using that $\psi$ is bounded in $C^k( [0,T], H^{2p+3} \cap H^{2p+1}_{(0)})$. 
To get \eqref{estim_der_k+1}, we differentiate \eqref{Schro_rest_lin} in a distribution sense, using Leibniz formula,
\begin{equation}
\label{eq_diff}
i \partial_t^{k+1} \left(\psi-\psi_1\right) = A \partial_t^{k} \left(\psi-\psi_1\right) - \mu \sum\limits_{n=0}^{k} u^{(n)} \partial_t^{k-n} \psi.  
\end{equation}
Yet, as $\psi-\psi_1$ and $\psi$ belong to $C^k( [0,T], H^{2p+3}_{(0)}(0,1))$, $u$ is in $H^k_0(0,T)$ and the space $H^{2p+1}_{(0)}$ is stable by multiplication by $\mu$, the right-hand side of \eqref{eq_diff} belongs to $L^2( (0,T), H^{2p+1}_{(0)}(0,1))$, giving that 
%
\begin{equation*}
\partial_t^{k+1} \left(\psi-\psi_1\right) \in L^2( (0,T), H^{2p+1}_{(0)}(0,1)),
\end{equation*}
with the following estimate, 
\begin{align*}
\left\| 
\partial_t^{k+1} \left(\psi-\psi_1\right)
\right\|_{L^2( (0,T), H^{2p+1}_{(0)})} 
&\ioe
\| \partial_t^{k} \left(\psi-\psi_1\right)\|_{L^2( (0,T), H^{2p+3}_{(0)})} 
+
\| \mu \|_{H^{2p+1}} \| u \|_{H^{k}} \| \psi \|_{C^{k}( [0,T], H^{2p+1}_{(0)})}
\\
&\leq 
C 
\left(
\| \psi_0 - \varphi_1 \|_{H^{2(p+k)+3}_{(0)}}
+
\| u  \|_{H^k}
\right),
\end{align*}
as $\psi$ is bounded in $C^k( [0,T], H^{2p+3}_{(0)}(0,1))$ and using estimate \eqref{estim_der_n} for $n=k$. 
\end{proof}
Now, we can prove \cref{estim_rq_faible}. 
\begin{proof}[Proof of \cref{estim_rq_faible}.]
By \cref{diff_eq_proj}, 
\begin{equation*}
\Theta_T(\psi_0,u) - \Theta_T( \varphi_1, 0)- d \Theta_T(\varphi_1,0).(\psi_0- \varphi_1, u)
=\left( 0, (\psi- \psi_1-\Psi)(T) \right),
\end{equation*}
where $\psi:= \psi( \cdot; \ u, \psi_0)$ and $\Psi$ is the solution of the linearized system \eqref{diff_u_psi0_T0} around $(\varphi_1, \psi_1)$. Then, $\psi-\psi_1-\Psi$ is the solution of the following Cauchy problem
\begin{equation}  
\label{Schro_rq}
\left\{
    \begin{array}{ll}
        i \partial_t (\psi-\psi_1-\Psi) = - \partial^2_x (\psi-\psi_1-\Psi) -u(t)\mu(x) \left( \psi -\psi_1 \right),\\
        (\psi-\psi_1-\Psi)(t,0)=(\psi-\psi_1-\Psi)(t,1)=0, \\
        (\psi-\psi_1-\Psi)(0, \cdot) = 0.
    \end{array}
\right.  
\end{equation}
Let $R>0$ and a control in $u \in H^k_0(0,T)$ such that $\| u \|_{H^k_0(0,T)} < R$ and $u_2(T)=\ldots=u_{k+1}(T)=0$. 

\medskip \noindent \textit{Step 1: $m \in \{0, \ldots, k\}$.}
Estimate \eqref{estim_sol_fin} on $\psi-\psi_1-\Psi$ and  \eqref{estim_der_n} on $\psi-\psi_1$ give $C>0$ such that 
\begin{align*}
\| (\psi-\psi_1-\Psi)(T) \|_{H^{2(p+m)+3}_{(0)}(0,1)}
\ioe 
C
\left(
\| \psi_0- \varphi_1 \|_{H^{2(p+m)+3}_{(0)}(0,1)} 
+
\| u \|_{H^m(0,T)} 
\right). 
\end{align*}
Therefore, one gets \eqref{estim_rest_quad} for all $m \in \{0, \ldots, k \}$ by inclusion of spaces.

\medskip \noindent \textit{Step 2: $m \in \{-(k+1), \ldots, -1\}$.} For the sake of simplicity, we will only treat the worst case $m=-(k+1)$. The general case for any $m \in \{-(k+1), \ldots, -1\}$ can be proved exactly the same. First, notice that, solving explicitly \eqref{Schro_rq}, the quadratic remainder is given by 
\begin{equation*}
(\psi-\psi_1 -\Psi)(T) 
=i
\int_0^T u(t) e^{-iA(T-t)} \mu \left( \psi-\psi_1 \right)(t) dt.
\end{equation*}
To estimate  $(\psi-\psi_1 -\Psi)(T)$  with respect to $u_{k+1}$, one can compute $k+1$ integrations by parts in time to get, using Leibniz formula,   
\begin{multline*}
(\psi-\psi_1 -\Psi)(T) 
=
iu_1(T) \mu \left( \psi-\psi_1 \right)(T) 
\\+ 
 (-1)^{k+1}i\int_0^T
u_{k+1}(t) 
e^{-iA(T-t)} 
\sum \limits_{n=0}^{k+1} 
(iA)^n 
\left( 
\mu 
\partial_t^{k+1-n} \left( \psi-\psi_1 \right)(t) 
\right) 
dt.
\end{multline*}
First, by definition \eqref{def_norm_faible} of the $H^{-(k+1)}(0,T)$-norm and using \eqref{estim_der_n} as $H^{2p+3}_{(0)}$ is included in $H^{2(p-[k+1])+3}_{(0)}$,
\begin{equation*}
\| 
u_1(T) \mu \left( \psi-\psi_1 \right)(T) 
\|_{H^{2(p-[k+1])+3}_{(0)}}
\ioe 
\| u \|_{H^{-(k+1)}(0,T)} 
\left(
\| \psi_0 - \varphi_1 \|_{H^{2p+3}_{(0)}}
+
\| u \|_{L^2}
\right).
\end{equation*}
Besides, by \cref{estim_rest_lin} and as the space $H^{2p+1}_{(0)}$ is invariant by multiplication by $\mu$, for a.e.\ $t \in (0,T)$, $\mu \left( \psi- \psi_1\right)(t)$ is in $H^{2p+1}_{(0)}$ and so 
$(iA)^n 
\left( 
\mu 
\partial_t^{k+1-n} \left( \psi-\psi_1 \right)(t) 
\right) 
$
is in $H^{2(p-n)+1}_{(0)}$. When $n=0, \ldots, k$, $e^{iAs}$ is an isometry from $H^{2(p-n)+1}_{(0)}$ to $H^{2(p-n)+1}_{(0)}$ (as $2(p-n)+1>0$ because $p\geq k$) and thus the triangular inequality directly gives that 
\begin{align*}
\Big\| 
\int_0^T
u_{k+1}(t) 
e^{-iA(T-t)} 
\sum \limits_{n=0}^{k} 
(iA)^n 
\big( 
&\mu 
\partial_t^{k+1-n} \left( \psi-\psi_1 \right)(t) 
\big) 
dt
\Big\|_{H^{2(p-n)+1}_{(0)}} 
\\
&\ioe
\int_0^T | u_{k+1}(t) |
\sum \limits_{n=0}^{k} 
\| 
\partial_t^{k+1-n} \left( \psi-\psi_1 \right)(t) \|_{H^{2p+1}_{(0)}}dt
\\
&\ioe
\| u_{k+1} \|_{L^2}
\left( 
\| \psi_0 - \varphi_1 \|_{H^{2(p+k)+3}_{(0)}} +
\| u \|_{H^{k}}
\right)
,
\end{align*}
using Cauchy-Schwarz inequality in time and estimates \eqref{estim_der_n}-\eqref{estim_der_k+1} on $\psi-\psi_1$. By inclusion of spaces, such bound still holds when we take the $H^{2(p-[k+1])+3}$-norm of the left hand-side.
For the term $n=0$, for a.e.\ $t \in (0,T)$, $\mu \left( \psi-\psi_1 \right)(t)$ is in $H^{2p+3} \cap H^{2p+1}_{(0)}$, so $A^{k+1} \left(\mu \left( \psi-\psi_1\right) \right)$ is in $H^{2(p-[k+1])+3} \cap H^{2(p-[k+1])+1}_{(0)}$. As $p-[k+1] \geq -1$, \cref{estim_G_C0} gives the existence of a constant $C>0$ such that
\begin{multline*}
\Big\| 
\int_0^T
u_{k+1}(t) 
e^{-iA(T-t)} 
A^{k+1} 
\big( 
\mu 
\left( \psi-\psi_1 \right)(t) 
\big) 
dt
\Big\|_{H^{2(p-[k+1])+3}_{(0)}}
\\
\ioe 
C \| u_{k+1} \|_{L^2} 
\|\mu 
\left( \psi-\psi_1 \right) \|_{C^0( [0,T], H^{2p+3} \cap H^{2p+1}_{(0)})}
\ioe 
C \| u_{k+1} \|_{L^2} 
\left(
\| \psi_0 - \varphi_1 \|_{H^{2p+3}_{(0)}}
+
\| u \|_{L^2}
\right),
\end{multline*}
using the algebra structure of $H^{2p+3}$ and estimate \eqref{estim_der_n} on $\psi-\psi_1$. 
\end{proof}

\subsection{Proof of \cref{thm:contr_lin_proj}: STLC with simultaneous estimates on the control}
\label{main_proof}
Let $p\soe k$ and $\mu$ in $H^{2(p+k)+3}(0,1)$ with $\mu^{(2n+1)}(0)=\mu^{(2n+1)}(1)=0$ for all $n=0, \ldots, p-1$ and satisfying \eqref{hyp_mu}. By continuity with respect to the control (see \cref{thm:well-posedness}), there exists $\widetilde{R}>0$ such that for all $u$ in $B_{\widetilde{R}} \left( H^k_0(0,T) \right)$ and $\psi_0 \in \S \cap H^{2(p+k)+3}_{(0)}(0,1)$
\begin{equation*}
\Re \langle \psi(T; \ u, \psi_0), \psi_1(T) \rangle >0.
\end{equation*}
Let $\eta>0$ so that for all $\psi_f$ in $\S \cap H^{2(p+k)+3}_{(0)}(0,1)$ such that $\| \psi_f -\psi_1(T)\|_{H^{2(p+k)+3}_{(0)}} < \eta$, we have 
\begin{equation*}
\Re \langle \psi_f, \psi_1(T) \rangle >0.
\end{equation*}
Finally, let $R \in (0, \widetilde{R})$.

\medskip \noindent \textit{Step 1: Apply the inverse mapping theorem.}
By \cref{diff_eq_proj} and \ref{inv_droite_con}, the end-point map $\Theta_T$ is $C^1$ on Banach spaces with its differential at $(\varphi_1, 0)$ that admits a continuous right inverse. Therefore, by the inverse mapping theorem (see for example \cite[Subsection 2.3]{BL10} for more details), there exists $\delta \in (0, \eta)$ and a $C^1$-map $\Gamma : \Omega_0 \times \Omega_T \rightarrow H^k_0( (0,T), \R)$ (where $\Omega_0$ and $\Omega_T$ are respectively defined by \eqref{def_Omega_T0} and \eqref{def_Omega_T}) such that for every $(\Psi_0, \Psi_f) \in \Omega_{0} \times \Omega_T$, 
$$
 \P_{\J} \psi(T; \ \Gamma(\Psi_0,\Psi_f), \Psi_0) = \Psi_f. 
$$

\medskip \noindent \textit{Step 2: Gaining the simultaneous estimates and boundary conditions on the nonlinear control.}
To prove that the estimates \eqref{estim_contr_nl}, true at the linear level (see \cref{inv_droite_con}), propagate to the nonlinear system, one must look inside the proof of the inverse mapping theorem.  Let $(\Psi_0, \Psi_f)$ in $\Omega_0 \times \Omega_T$. At Step 1, the inverse mapping theorem gave the existence of $(\psi_0,u) \in \Omega_0 \times B_{R} \left( H^k_0(0,T) \right) $ such that 
\begin{equation*}
\P_{\J} \psi(T; \ u, \psi_0) = \psi_f \quad \text{ and } \quad \Psi_0= \psi_0.
\end{equation*}
This antecedent is constructed as the fixed point of the following application
 \begin{equation*}
 \begin{array}{ccrcl}
\Phi_{ (\Psi_0, \Psi_f)} & : &\Omega_0 \times B_{R} \left( H^k_0(0,T) \right) & \to &\Omega_0 \times B_{R} \left( H^k_0(0,T) \right) \\
 & & (\psi_0, u) & \mapsto & (\psi_0,u) - d \Theta_T(\varphi_1,0)^{-1} . \left[ \Theta_T(\psi_0,u)- (\Psi_0, \Pi_{\psi_1(T)} \Psi_f)\right].  \\
\end{array}
\end{equation*}
and therefore is given by
\begin{equation}
\label{eq:expr_ptfix}
\left( \psi_0, u\right) - (\varphi_1,0)
\\ = -  
 d\Theta_T(\varphi_1,0)^{-1} . \left[
 \Theta_T(\psi_0,u) -  d \Theta_T(\varphi_1,0).(\psi_0- \varphi_1, u) - (\Psi_0, \Pi_{\psi_1(T)} \Psi_f)
 \right].
\end{equation}

\medskip \noindent \textit{Step 2.1: Boundary conditions on the control.}
The linear control map is constructed in \cref{inv_droite_con} so that any linear control defined as $(\psi_0, u):= d \Theta_T(\varphi_1,0)^{-1} ( \psi_0, \psi_f)$ satisfies the boundary conditions \eqref{eq:weak_bc_nl}. Thus from \eqref{eq:expr_ptfix}, one deduces that the nonlinear control also satisfies \eqref{eq:weak_bc_nl}.  
\medskip \noindent \textit{Step 2.2: Simultaneous estimates on the control.}
Equation \eqref{eq:expr_ptfix} together with \cref{inv_droite_con} give the existence of a constant $C=C(T)>0$ such that, for all $m=-(k+1), \ldots, k$, 
\begin{multline*}
\| (\psi_0, u) - (\varphi_1,0) \|_{H^{2(p+m)+3}_{(0)}(0,1) \times H^m_0(0,T)}  
\\ 
\ioe C
\left\|
\Theta_T(\psi_0,u) -  d \Theta_T(\varphi_1,0).(\psi_0- \varphi_1, u) - (\Psi_0, \Pi_{\psi_1(T)} \Psi_f)
\right\|_{H^{2(p+m)+3}_{(0)}(0,1) \times H^{2(p+m)+3}_{(0)}(0,1)}.  
\end{multline*}
Moreover, \cref{estim_rq_faible} gives $C=C(T, \mu, \widetilde{R})>0$ such that, for all $m=-(k+1), \ldots, k$, 
\begin{multline*}
\left\|
\Theta_T(\psi_0,u) - \Theta_T( \varphi_1, 0)- d \Theta_T(\varphi_1,0).(\psi_0- \varphi_1, u)
\right\|
_{H^{2(p+m)+3}_{(0)}(0,1) \times H^{2(p+m)+3}_{(0)}(0,1)}
\\ \ioe 
C
\|u \|_{H^m(0,T)}
\left(
\| \psi_0 - \varphi_1  \|_{H^{2(p+k)+3}_{(0)}(0,1)} 
+
\| u \|_{H^k(0,T)}
\right).
\end{multline*}
Therefore, by the triangular inequality, as $\Theta_T(\varphi_1, 0)=(\varphi_1, \P_{\J} \psi_1(T))$, one gets,
\begin{multline*}
\| (\psi_0, u) - (\varphi_1,0) \|_{H^{2(p+m)+3}_{(0)}(0,1) \times H^m_0(0,T)}
\ioe 
C 
\|u \|_{H^m(0,T)}
\left(
\| \psi_0 - \varphi_1  \|_{H^{2(p+k)+3}_{(0)}(0,1)}  +
\| u \|_{H^k(0,T)}
\right)
\\+
C
\| \Psi_0 - \varphi_1,  \Psi_f - \P_{\J} \psi_1(T)  \|_{H^{2(p+m)+3}_{(0)}(0,1) \times H^{2(p+m)+3}_{(0)}(0,1)}
.
\end{multline*}
Therefore, if $\delta >0$ and $R>0$ are small enough so that, for example, 
\begin{equation*}
C
\left(
\| \psi_0 - \varphi_1 \|_{H^{2(p+k)+3}_{(0)}(0,1)}  +
\| u \|_{H^k(0,T)}
\right) 
\ioe \frac{1}{2}, 
\end{equation*}
one deduces that, as $\Psi_0=\psi_0$ by construction, for all $m=-(k+1), \ldots, k$,  
\begin{equation*}
\|u \|_{H^m(0,T)} 
\ioe 
C
\|  \psi_0 - \varphi_1 ,  \Psi_f - \P_{\J} \psi_1(T)   \|_{H^{2(p+m)+3}_{(0)}(0,1) \times H^{2(p+m)+3}_{(0)}(0,1)},
\quad 
\forall m=-(k+1), \ldots, k.
\end{equation*}
 
\section{Linear control with simultaneous estimates with \cite{EZ10}}
\label{papier_Ervedoza}
The goal of this section is to explain another proof of \cref{inv_droite_con}, relying on the ideas of \cite{EZ10} instead of the moment result given in \cref{thm_mom_weak_estim}. %
\subsection{Building smooth controls for smooth data} 
In \cite{EZ10}, Ervedoza and Zuazua developed a method to construct a control map which preserves the regularity of the data to be controlled for time-reversible linear systems.  However, to use such result in our case, we need to modify slightly the result of \cite{EZ10} as we want to deal with control in projection and not exact controllability. From \cite{EZ10}, we can deduce the following result.
\begin{thm}
\label{thm_EZ10}
Let $X, U$ two Hilbert spaces, $H$ a closed subspace of $X$, $\P$ the orthogonal projection on $H$, $(e^{At})_{t \in \R}$ a strongly continuous group on $X$ with generator $A : D(A) \subset X \rightarrow X$ and $B$ in $\mathcal{L}(U, X_{-1})$ an admissible operator in the sense of \cite[Definition 1.1]{EZ10}. Assume that $\P \circ A= A \circ \P$ and that the system 
\begin{equation}
\label{sys_abstrait}
z'=Az+Bu
\end{equation}
is exactly controllable in projection in some time $T^*$ in $H$: for all $z_0 \in H$, there exists a control $u \in L^2( (0,T^*), U)$ such that the solution of \eqref{sys_abstrait} with the initial condition $z_0$ and control $u$ satisfies 
\begin{equation}
\label{proj}
\P z(T^*)=0.
\end{equation}
Let $T>T^*$, $\delta>0$ such that $T-2\delta \soe T^*$,  $s \in \N_+$ and $ \eta \in C^{s}(\R)$ such that $\eta(t)=0$ if $t \not\in (0,T)$ and $\eta(t)=1$ if $t \in [\delta, T- \delta]$. There exists a constant $C=C(s, T, \eta)>0$ and a linear map $\mathcal{V} : D(A^s) \cap H \rightarrow H^s_0 ( (0,T), U)$ such that for every $z_0 \in D(A^s) \cap H$, the solution of \eqref{sys_abstrait} with control $V:= \mathcal{V}(z_0)$ and initial condition $z_0$ belongs to $C^s([0,T], X)$ and satisfies the requirement \eqref{proj} with
\begin{equation}
\label{estim_abst}
\int_0^T \| V(t)\|^2_U \frac{dt}{\eta(t)} \ioe C \| z_0 \|^2_X
\quad \text{ and } \quad
\forall m \in (0, s), \
\| V \|_{H^{m}_0((0,T), U)} \ioe C \| z_0 \|_{D(A^{m})}.
\end{equation}
\end{thm}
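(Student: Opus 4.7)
\medskip
\noindent\emph{Proof plan for \cref{thm_EZ10}.}
The plan is to reduce the statement to the original Ervedoza–Zuazua theorem \cite{EZ10} (stated for exact controllability) by working on the projected system; the commutation hypothesis $\P \circ A = A \circ \P$ is precisely what makes this reduction possible.

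\medskip
\noindent\textbf{Step 1: Restriction of the abstract system to $H$.}
Since $\P$ is an orthogonal projection and commutes with $A$ on $D(A)$, the closed subspace $H$ is invariant under $A$ and under the group $(e^{At})_{t\in\R}$. I would denote by $\tilde A$ the part of $A$ in $H$, so that $(e^{At}|_H)_{t \in \R}$ is a strongly continuous group on $H$ with generator $\tilde A$ and $D(\tilde A^s) = D(A^s) \cap H$ for every $s \in \N$. The commutation also extends to the distributional level so that $\P$ is bounded on $X_{-1}$ and commutes with the extension of $A$; this allows to define $\tilde B := \P \circ B \in \mathcal{L}(U, (H)_{-1})$ and to inherit admissibility from that of $B$ (by the $L^2$ admissibility criterion, the orthogonal projection $\P$ can be freely moved past the group).

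\medskip
\noindent\textbf{Step 2: Rewriting the controllability assumption.}
For any $z_0 \in H$ and any $u \in L^2((0,T^*),U)$, applying $\P$ to Duhamel's formula for \eqref{sys_abstrait} and using $\P A = A\P$ yields
\begin{equation*}
\P z(t) = e^{\tilde A t} z_0 + \int_0^t e^{\tilde A(t-\tau)}\tilde B u(\tau)\,d\tau,
\end{equation*}
so the projected trajectory $\tilde z := \P z$ solves the abstract system $\tilde z' = \tilde A \tilde z + \tilde B u$ on $H$ in the sense of \cite{EZ10}. Consequently, the exact controllability in projection in time $T^*$ of \eqref{sys_abstrait} in $H$ assumed in the theorem is strictly equivalent to the exact controllability (in the usual sense) of the projected system on $H$ in time $T^*$.

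\medskip
\noindent\textbf{Step 3: Applying \cite{EZ10} to the projected system.}
Now all hypotheses of the Ervedoza–Zuazua theorem are satisfied on $(H,U,\tilde A,\tilde B)$: a time-reversible group, an admissible control operator, and exact controllability in time $T^*$. Applying \cite{EZ10} (in particular the construction of smooth controls via a weighted penalization with the cut-off $\eta$, whose minimizer over the dual has the form $V(t)=\eta(t)\tilde B^\ast e^{\tilde A^\ast(T-t)}z_T^\eps$) produces a linear map $\mathcal V : D(\tilde A^s) \to H^s_0((0,T),U)$ steering the projected system to zero at time $T$, together with the weighted $L^2$ bound and the fractional Sobolev estimates \eqref{estim_abst}. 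Reading the conclusion back on the original trajectory via Step~2 gives $\P z(T)=0$ for \eqref{sys_abstrait} and, because $D(\tilde A^s)=D(A^s)\cap H$, the right-hand sides in \eqref{estim_abst} are exactly those announced in the theorem.

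\medskip
\noindent\textbf{Main obstacle.}
The only non-routine point is to check that admissibility, the duality identities and the weighted observability inequality underpinning the construction in \cite{EZ10} all survive the projection onto $H$. This is not technically deep but requires care: one uses $\P^\ast = \P$, the commutation $\P A^\ast = A^\ast \P$ (obtained by taking adjoints of $\P A = A \P$) and the identity $\tilde B^\ast = B^\ast \circ \P$ to show that the observability inequality for the projected adjoint system is merely the restriction of the original weighted observability inequality to initial data lying in $H$. Once this is verified, the EZ10 machinery applies verbatim and the proof is complete.
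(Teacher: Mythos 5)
Your proposal is correct and follows essentially the same route as the paper: both reduce to the exact-controllability result of \cite{EZ10} by passing to the projected system $y' = Ay + \P B u$ on $H$, using $\P A = A\P$ to identify $\P z$ with the solution of that projected system, and then invoke the Ervedoza--Zuazua construction on $(H, \tilde A, \P B)$. The extra care you flag concerning admissibility and observability of $\tilde B = \P B$ is indeed the only point the paper handles implicitly, and your suggested verification via $\P^\ast = \P$, $\P A^\ast = A^\ast \P$ and $\tilde B^\ast = B^\ast \P$ is the right one.
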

\begin{proof}
This proposition is a consequence of the work \cite{EZ10} noticing that, as $\P \circ A= A \circ \P$, for every $z_0 \in H$ and every control $u \in L^2( (0,T), U)$
\begin{equation}
\label{link_proj}
\P z( \cdot; \ u, z_0) = y( \cdot; \ u, z_0),
\end{equation}
where $z$ is the solution of \eqref{sys_abstrait} and $y$ is the solution of 
\begin{equation}
\label{system_abstrait_proj}
y'= A y + \P \circ B u.  
\end{equation}
Therefore, we apply \cite{EZ10} to the system \eqref{system_abstrait_proj} working on the Hilbert space $H$ endowed with the scalar product of $X$, with the operators defined by $D(\tild{A})= D(A) \cap H$, $\tild{A}=A$ which still generates a strongly continuous group and $\tild{B}= \P \circ B$ still an admissible operator. From \eqref{link_proj}, the exact controllability in projection of \eqref{sys_abstrait} entails the exact controllability of \eqref{system_abstrait_proj}. Therefore, \cite[Proposition 1.3, Theorem 1.4 and Corollary 1.5]{EZ10} give the existence of a linear continuous map $\mathcal{V} : D(\tild{A}^s) \rightarrow H^s_0( (0,T), U)$ with the size estimates \eqref{estim_abst} such that for every $z_0 \in D(\tild{A}^s)=D(A^s) \cap H$, $y( \cdot; \mathcal{V}(z_0), z_0)$ belongs to $C^s( [0,T], H)$ with $y(T; \ \mathcal{V}(z_0), z_0)=0$ and thus $\P z(T; \mathcal{V}(z_0), z_0)=0$ by \eqref{link_proj}.
\end{proof}

\subsection{Proof of \cref{inv_droite_con} with \cite{EZ10}}
The goal of this subsection is to apply \cref{thm_EZ10} to the linearized Schrödinger equation. To that end, we first state that with a change of global phase, one can work with a stationary equilibrium rather than around the ground state. 
%
\begin{rem}
\label{rem:rot}
By \cref{diff_eq_proj}, for all $u \in H_0^k(0,T)$ and $\psi_0 \in H^{2(p+k)+3}_{(0)}(0,1)$, 
 $$d \Theta_T (\varphi_1,0). ( \psi_0, u)= \left(\psi_0,  \P_{\J} \Psi(T) \right),$$
 where $\Psi$ is the solution of the linearized system around the ground state,
 \begin{equation}  
 \label{Schro_lin}
 \left\{
    \begin{array}{ll}
        i \partial_t \Psi(t,x) = - \partial^2_x \Psi(t,x) -u(t) \mu(x) \psi_1(t,x),  \quad (t,x) \in (0,T) \times (0,1),\\
        \Psi(t,0) = \Psi(t,1)=0, \quad t \in (0,T), \\
        \Psi(0,x)=\psi_0. 
    \end{array}
\right.  \end{equation}
To work with a stationary equilibrium, one can perform the change of function $\Psi(t,x)= \tild{\Psi}(t,x) e^{-i \lambda_1 t}$ to work instead with  
 \begin{equation}  
 \label{Schro_lin_rot}
 \left\{
    \begin{array}{ll}
        i \partial_t \tild{\Psi}(t,x) = \left(- \partial^2_x- \lambda_1 \Id \right) \tild{\Psi}(t,x) -u(t) \mu(x) \varphi_1(x),  \quad (t,x) \in (0,T) \times (0,1),\\
        \tild{\Psi}(t,0) = \tild{\Psi}(t,1)=0, \quad t \in (0,T), \\
        \tild{\Psi}(0,x)=\psi_0. 
    \end{array}
\right.  \end{equation}
Such solutions will be denoted $\tild{\Psi}(\cdot; \ u, \psi_0)$. To prove \cref{inv_droite_con}, it is then equivalent to prove that there exists a constant $C_T>0$ such that for all $(\psi_0, \psi_f)$ in $[T_{\S} \varphi_1 \cap H^{2(p+k)+3}_{(0)}] \times [T_{\S} \varphi_1 \cap \H \cap H^{2(p+k)+3}_{(0)}] $, there exists $u \in H_0^k(0,T)$ (constructed as a linear function of $\psi_0$ and $\psi_f$) such that $\P_{\J} \tild{\Psi}(T; \ u, \psi_0)=\psi_f$ with the boundary conditions \eqref{bc} and the estimates \eqref{size_estimate}.
\end{rem}
To apply \cref{thm_EZ10}, we must check that \eqref{Schro_lin_rot} is controllable in projection in an appropriate functional setting. This is done by solving a trigonometric moment problem. 
\begin{prop}
\label{lem:lin_con_L2}
Let $T>0$ and $\mu$ in $H^{2p+3}( (0,1), \R)$ satisfying \eqref{hyp_mu}. The linear equation \eqref{Schro_lin_rot} is exactly controllable in projection in $H^{2p+3}_{(0)}(0,1) \cap T_{\S} \varphi_1$ in time $T$: for all $\psi_0 \in H^{2p+3}_{(0)}(0,1) \cap T_{\S} \varphi_1 $, there exists a control $u \in L^2( (0,T), \R)$ such that 
$$
\P_{\J} \tild{\Psi}(T; \ u, \psi_0)=0. 
$$
\end{prop}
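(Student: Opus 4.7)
The plan is to reduce the controllability-in-projection condition to a trigonometric moment problem and then to apply \cref{thm_mom_L2_poly}. By Duhamel's formula for \eqref{Schro_lin_rot}, for every $j \in \N^*$,
\begin{equation*}
\langle \tild{\Psi}(T; \ u, \psi_0), \varphi_j \rangle
=
e^{-i(\lambda_j - \lambda_1)T}
\left(
\langle \psi_0, \varphi_j \rangle
+
i \langle \mu \varphi_1, \varphi_j \rangle
\int_0^T u(t) e^{i(\lambda_j - \lambda_1) t} dt
\right).
\end{equation*}
Hence the condition $\P_{\J} \tild{\Psi}(T) = 0$ is equivalent to the trigonometric moment problem
\begin{equation*}
\int_0^T u(t) e^{i (\lambda_j - \lambda_1) t} dt
=
\frac{i \langle \psi_0, \varphi_j \rangle}{\langle \mu \varphi_1, \varphi_j \rangle}
=: d_{j-1},
\quad \forall j \in J,
\end{equation*}
with no constraint imposed for $j \notin J$.

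To match the setting of \cref{thm_mom_L2_poly}, I set $\omega_n := \lambda_{n+1} - \lambda_1$ for $n \in \N$, so that $\omega_0 = 0$ and $\omega_{n+1} - \omega_n = (2n+3)\pi^2 \to +\infty$, i.e., \eqref{AsymptGap} holds. I then extend the data to a full sequence $(d_n)_{n \in \N}$ by declaring $d_n := 0$ whenever $n+1 \notin J$. Two properties need to be checked. First, $d_0 \in \R$: when $1 \in J$, the number $\langle \mu \varphi_1, \varphi_1 \rangle$ is real, and the assumption $\psi_0 \in T_\S \varphi_1$ amounts to $\Re \langle \psi_0, \varphi_1 \rangle = 0$, so $i \langle \psi_0, \varphi_1 \rangle$ is real and $d_0 \in \R$; when $1 \notin J$, $d_0 = 0 \in \R$. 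Second, $(d_n)_{n \in \N}$ belongs to $l^2_r(\N, \C)$: using \eqref{hyp_mu},
\begin{equation*}
\sum_{n=0}^{+\infty} |d_n|^2
\ioe
\frac{1}{c^2} \sum_{j=1}^{+\infty} j^{2(2p+3)} \left| \langle \psi_0, \varphi_j \rangle \right|^2
=
\frac{1}{c^2} \| \psi_0 \|_{H^{2p+3}_{(0)}}^2
< +\infty.
\end{equation*}

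Applying \cref{thm_mom_L2_poly} (for instance with the trivial polynomial constraint $n = 1$ and $d_{-1} := 0$) to the sequence $(d_n)_{n \in \N}$ associated with the frequencies $(\omega_n)_{n \in \N}$, one obtains a real-valued control $u \in L^2((0,T), \R)$ satisfying $\int_0^T u(t) e^{i \omega_{j-1} t} dt = d_{j-1}$ for every $j \in J$; equivalently, $\P_\J \tild{\Psi}(T; \ u, \psi_0) = 0$. No serious obstacle arises: the heavy lifting is entirely packaged inside \cref{thm_mom_L2_poly} and the Schrödinger spectrum trivially satisfies the asymptotic gap. The only subtlety is the reality condition on $d_0$, which is exactly the role played by the tangent-space restriction $\psi_0 \in T_\S \varphi_1$.
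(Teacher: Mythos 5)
Your proof is correct and follows essentially the same route as the paper: solve \eqref{Schro_lin_rot} explicitly, reduce $\P_{\J}\tild\Psi(T)=0$ to the trigonometric moment problem on the frequencies $\omega_n=\lambda_{n+1}-\lambda_1$, verify that \eqref{hyp_mu} and $\psi_0\in H^{2p+3}_{(0)}\cap T_{\S}\varphi_1$ place the data in $l^2_r(\N,\C)$, and invoke \cref{thm_mom_L2_poly}. You are merely a bit more explicit than the paper about the reality of $d_0$ and about taking $n=1$, $d_{-1}=0$ to fit the polynomial-constraint formulation of \cref{thm_mom_L2_poly}.
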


\begin{proof}
Let $\psi_0 \in H^{2p+3}_{(0)}(0,1) \cap T_{\S} \varphi_1$. Solving explicitly \eqref{Schro_lin_rot}, one gets
\begin{equation*}
\tild{\Psi}(T) = 
\sum \limits_{j=1}^{+\infty} \langle \psi_0, \varphi_j \rangle e^{-i (\lambda_j - \lambda_1)T} \varphi_j
+
i \sum \limits_{j=1}^{+\infty}
\left(
\langle \mu \varphi_1, \varphi_j \rangle 
\int_0^T u(t) e^{-i (\lambda_j - \lambda_1)(T-t)} dt 
\right)
\varphi_j.
\end{equation*}
Therefore, the equality $\P_{\J} \tild{\Psi}(T)=0$ is equivalent to the following trigonometric moment problem
\begin{equation*}
\int_0^T u(t) e^{i (\lambda_j- \lambda_1) t} dt = -\frac{\langle \psi_0, \varphi_j \rangle}{i \langle \mu \varphi_1, \varphi_j \rangle}=:d_{j-1}, \quad \forall j \in J.
\end{equation*}
Yet, assumption \eqref{hyp_mu} on $\mu$ together with the fact that $\psi_0$ belongs to $H^{2p+3}_{(0)}(0,1)$ entails that the sequence $(d_j \1_{j \in J})_{j \in \N}$ is in $l^2_r(\N,\C)$ (as $\psi_0$ belongs to $T_{\S} \varphi_1$). Thus, the solvability of a trigonometric moment problem in $L^2(0,T)$ given for example in \cref{thm_mom_L2_poly} concludes the proof. 
\end{proof}
%
%
Then, we can apply \cref{thm_EZ10} to our system.
\begin{lem}
\label{lem_EZ10}
Let $T>0$ and $\mu$ in $H^{2(p+k)+3}( (0,1), \R)$  with $\mu^{(2n+1)}(0)=\mu^{(2n+1)}(1)=0$ for all $n=0, \ldots, p-1$ and satisfying \eqref{hyp_mu}. There exists a linear map $\mathcal{L}: T_{\S} \varphi_1 \cap \H \cap H^{2(p+k)+3}_{(0)}(0,1) \rightarrow H^k_0( (0,T), \R)$ and a constant $C>0$ such that for all $\psi_0$ in $T_{\S} \varphi_1 \cap \H \cap H^{2(p+k)+3}_{(0)}(0,1)$, if $u:=\mathcal{L}(\psi_0)$, 
$$
\P_{\J} \tild{\Psi}(T; \ u, \psi_0 - \langle \psi_0, \varphi_1 \rangle \varphi_1)=0.
$$
Moreover, every control satisfies the boundary conditions 
\begin{equation}
\label{bc_u_inter}
u_1(T)=u_2(T)= \ldots =u_{k+1}(T)=0,
\end{equation}
and the following size estimates
\begin{equation}
\label{size_estimate_u_inter}
\left\| u \right\|_{H^{m}(0,T)} 
\ioe 
C 
\left\|  \psi_0  - \langle \psi_0, \varphi_1 \rangle \varphi_1 \right\|_{H^{2(p+m)+3}_{(0)}(0,1) }, \quad \forall m=-(k+1),\ldots,k.
\end{equation}
\end{lem}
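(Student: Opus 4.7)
By \cref{rem:rot}, it suffices to construct, for each $\psi_0 \in T_{\S}\varphi_1 \cap \H \cap H^{2(p+k)+3}_{(0)}$, a control $u \in H^k_0((0,T),\R)$ driving the rotated system \eqref{Schro_lin_rot} from $\tilde\psi_0 := \psi_0 - \langle\psi_0,\varphi_1\rangle\varphi_1$ to $\P_{\J}\tild\Psi(T) = 0$, depending linearly on $\psi_0$. Since $\Re\langle\psi_0,\varphi_1\rangle = 0$, the data $\tilde\psi_0$ belongs to $\varphi_1^\perp \cap \H \cap H^{2(p+k)+3}_{(0)}$. The key idea is to parameterize the control as $u := v^{(k+1)}$ with $v \in H^{2k+1}_0((0,T),\R)$: the vanishing of $v^{(j)}$ at $0$ and $T$ for $j = 0, \ldots, 2k$ yields $u \in H^k_0$ and, through $u_n = v^{(k+1-n)}$ (no integration constants), the boundary conditions \eqref{bc_u_inter}. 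Moreover, writing $A := i(\partial_x^2 + \lambda_1\Id)$ for the generator of \eqref{Schro_lin_rot}, the identity $A\varphi_1 = 0$ gives
$\langle\tild\Psi(T),\varphi_1\rangle = \langle\tilde\psi_0,\varphi_1\rangle + i\langle\mu\varphi_1,\varphi_1\rangle u_1(T) = 0$, so the $\varphi_1$-component of the target vanishes automatically. Performing $(k+1)$ integrations by parts in the Duhamel formula, all boundary terms vanishing, one obtains
\begin{equation*}
\tild\Psi(T) = e^{AT}\tilde\psi_0 + A^{k+1}\int_0^T e^{A(T-\tau)}\, iv(\tau)\mu\varphi_1\,d\tau.
\end{equation*}
Since $\P_{\J}\tilde\psi_0 \in \varphi_1^\perp$ and $A^{-1}$ is well-defined and smoothing of order two on $\varphi_1^\perp$, the condition $\P_{\J}\tild\Psi(T) = 0$ is equivalent to $\P_{\J}\tilde Y(T) = 0$, where $\tilde Y$ solves the auxiliary Cauchy problem
\begin{equation*}
\tilde Y' = A\tilde Y + iv\mu\varphi_1, \qquad \tilde Y(0) = z_0 := A^{-(k+1)}\P_{\J}\tilde\psi_0 \in \H \cap \varphi_1^\perp.
\end{equation*}

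To produce such a $v$, I apply \cref{thm_EZ10} to this auxiliary system with $X := H^{2p+3}_{(0)}$, $H := \H \cap \varphi_1^\perp$, $U := \R$ and smoothness index $s := 2k+1$ (this choice of $H$ avoids the degeneracy of $A$ on $\varphi_1$ and lets the orthogonal projection $\P_H$ commute with $A$). Admissibility of $Bv := iv\mu\varphi_1$ is immediate from $\mu\varphi_1 \in H^{2p+3}_{(0)}$ (a consequence of the boundary conditions on $\mu$), and the controllability-in-projection hypothesis with $L^2$-controls follows from \cref{lem:lin_con_L2}, which in fact yields the stronger vanishing of $\P_{\J}$ and hence of $\P_H$. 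Under the identification $D(A^m)\cap H = H^{2(p+m)+3}_{(0)} \cap H$, \cref{thm_EZ10} produces a linear continuous map $\mathcal V : D(A^{2k+1}) \cap H \to H^{2k+1}_0$ such that $v := \mathcal V(z_0)$ satisfies $\P_{\J}\tilde Y(T) = 0$ together with the simultaneous estimates $\|v\|_{H^m_0} \ioe C\|z_0\|_{H^{2(p+m)+3}_{(0)}}$ for every $m \in [0,2k+1]$. Since $z_0 \in \varphi_1^\perp$ and $A^{-(k+1)}$ induces an isomorphism between $H^{s}_{(0)}\cap\varphi_1^\perp$ and $H^{s+2(k+1)}_{(0)}\cap\varphi_1^\perp$ (direct Fourier-series computation based on $A\varphi_j = i(\lambda_1-\lambda_j)\varphi_j$), these rephrase as $\|v\|_{H^m_0} \ioe C\|\tilde\psi_0\|_{H^{2(p+m-k-1)+3}_{(0)}}$.

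Finally, I set $\mathcal L(\psi_0) := v^{(k+1)} =: u$, which is manifestly linear and valued in $H^k_0$. To derive \eqref{size_estimate_u_inter}, I show that $\|u\|_{H^m(0,T)} = \|v\|_{H^{m+k+1}_0(0,T)}$ for every $m \in \{-(k+1),\ldots,k\}$: for $m \geq 0$ this is the standard identity, while for $m = -q$ with $q \in \{1,\ldots,k+1\}$, the definition \eqref{def_norm_faible} combined with $u_1(T) = \cdots = u_{k+1}(T) = 0$ gives $\|u\|_{H^{-q}} = \|u_q\|_{L^2} = \|v^{(k+1-q)}\|_{L^2} = \|v\|_{H^{k+1-q}_0}$. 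Substituting $m' = m+k+1 \in [0,2k+1]$ into the estimate on $v$ established above yields precisely \eqref{size_estimate_u_inter}. The main technical hurdle is the careful justification of the $(k+1)$ successive integrations by parts — at every step the intermediate antiderivative must lie in a function space to which $A$ may be applied — and the precise matching of the projection/subspace structure to the abstract framework of \cite{EZ10}, in particular verifying the hypotheses of \cref{thm_EZ10} after excluding the $\varphi_1$-direction where $A$ has its kernel.
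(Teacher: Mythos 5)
Your proof is correct and takes essentially the same route as the paper: you parameterize $u = v^{(k+1)}$ with $v \in H^{2k+1}_0$, transfer the data $\tilde\psi_0$ to the smoother $z_0 = A^{-(k+1)}\tilde\psi_0$ (which is the paper's elliptic equation $(-i\tilde A)^{k+1}\psi_{00} = \tilde\psi_0$ with $\langle\psi_{00},\varphi_1\rangle = 0$), apply \cref{thm_EZ10} with smoothness index $s = 2k+1$, and convert the resulting estimates on $v$ to estimates on $u$ by differentiating/integrating through $k+1$ orders. The paper passes from $v$ to $u$ by invoking uniqueness of the Cauchy problem for $\partial_t^{k+1}\underline{\tild\Psi}$, whereas you reach the same identification through $(k+1)$ integrations by parts in the Duhamel formula, and you take $H = \H \cap \varphi_1^\perp$ rather than working with $\P_\J$ directly — both presentational variants of the same argument. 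The only cosmetic slip is the claim that $\|u\|_{H^m} = \|v\|_{H^{m+k+1}_0}$ is an "identity" for $m \geq 0$: it is $\|u\|_{H^m_0}$ that equals $\|v\|_{H^{m+k+1}_0}$, and the full $H^m$ norm is only equivalent to $\|u\|_{H^m_0}$ via Poincaré, which is all the estimate \eqref{size_estimate_u_inter} requires.
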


\begin{proof}
First, notice that the equation \eqref{Schro_lin_rot} can be put into the abstract setting 
\begin{equation*}
i \frac{d \tild{\Psi}}{dt}= \widetilde{A} \tild{\Psi} - Bu
\end{equation*}
with 
\begin{align*}
\widetilde{A} \varphi &= \left(A -\lambda_1 \Id \right)\varphi, \quad D(\widetilde{A}):=H^{2p+5}_{(0)}(0,1) \subset X:=H^{2p+3}_{(0)}(0,1) \cap T_{\S} \varphi_1, \\
B u &= u \times \mu \varphi_1, \quad B : \R \rightarrow H^{2p+1}_{(0)}(0,1) 
\end{align*}
The operator $\widetilde{A}$ generates a strongly continuous group on $X$. Let $\psi_0 \in \H \cap H^{2(p+k)+3}_{(0)}(0,1)$ and $\psi_{00}=: \mathcal{C}(\psi_0)$ in $\H \cap H^{2p+4k+5}_{(0)}(0,1)$ the solution of the elliptic equation,
\begin{equation*}
(-i\widetilde{A})^{k+1} \psi_{00} =  \psi_0- \langle \psi_0, \varphi_1 \rangle \varphi_1 
\quad \text{ with } \quad 
\langle \psi_{00}, \varphi_1 \rangle =0. 
\end{equation*}
%
Let $T^* \in (0, T)$, $\delta >0$ such that $T- 2 \delta \geq T^*$ and $\eta: \R \rightarrow [0,1]$ in $C^{2k+1}( \R)$ such that $\eta(t)=0$ if $t \not\in (0,T)$ and $\eta(t)=1$ if $t \in [\delta, T- \delta]$. 
As the system \eqref{Schro_lin_rot} is admissible by \cref{estim_G_C0} and exactly controllable in projection in time $T^*$ by \cref{lem:lin_con_L2}, by \cref{thm_EZ10}, there exists a constant $C>0$ and a control $V=\mathcal{V}(\psi_{00})$ in $H^{2k+1}_0( (0,T), \R) \cap L^2( (0,T), \frac{dt}{\eta})$ (with $\mathcal{V}$ a linear map) such that the solution $\underline{\tild{\Psi}}( \cdot; \ V, \psi_{00})$ of \eqref{Schro_lin_rot} 
belongs to $C^{2k+1}( [0,T], H^{2p+3}_{(0)}(0,1))$ and satisfies $\P_{\J} \underline{\tild{\Psi}}( T; \ V, \psi_{00})=0$ with the following estimates on the control,
\begin{align}
\label{estim_V_1} 
\int_0^T V(t)^2 \frac{dt}{\eta(t)} &\leq C_T \| \psi_{00} \|^2_{H^{2p+3}_{(0)}(0,1)},
\\
\label{estim_V_2}
\| V\|_{H^m_0(0,T)} &\leq C \| \psi_{00} \|_{H^{2(p+m)+3}_{(0)}(0,1)}, \quad \forall \ m=1, \ldots 2k+1. 
\end{align}
Then, if $u:=V^{(k+1)} \in H_0^k(0,T)$, by uniqueness,  $\tild{\Psi}( \cdot; \ u, \psi_0 - \langle \psi_0, \varphi_1 \rangle \varphi_1) = \partial_t^{k+1} \underline{\tild{\Psi}}(\cdot; \ V, \psi_{00})$ and thus satisfies $\P_{\J} \tild{\Psi}(T; \ u, \psi_0 - \langle \psi_0, \varphi_1 \rangle \varphi_1)=(-i \tild{A})^{k+1} \P_{\J} \underline{\tild{\Psi}}(T)=0$. By construction, $u$ satisfies the boundary conditions \eqref{bc_u_inter} and the size estimates \eqref{size_estimate_u_inter} on $u$ are deduced from \eqref{estim_V_1}-\eqref{estim_V_2} as for all $m=0, \ldots, 2k+1$, $\| \psi_{00} \|_{H^{2(p+m)+3}_{(0)}(0,1)}= \| \ \psi_0- \langle \psi_0, \varphi_1 \rangle \varphi_1  \|_{H^{2(p+m-(k+1))+3}_{(0)}(0,1)}$. Therefore, the proof is concluded with $\L := D^{(k+1)} \circ \mathcal{V} \circ \mathcal{C}$ where $D^{(k+1)} : V \mapsto V^{(k+1)}$. 
\end{proof}
As stated in \cref{lem_EZ10}, the change of variables done in \cref{rem:rot} to work with a stationary equilibrium entails that we miss the first coordinate. However, we can correct it in a second time using a moment problem and thus prove \cref{inv_droite_con}. 
\begin{proof}[Proof of \cref{inv_droite_con}.]
We prove \cref{inv_droite_con} using the strategy of \cref{rem:rot}, first for $\psi_f=0$ and then, by the time-reversibility of the system, for any target $\psi_f \in \H$. 

\bigskip
\noindent \emph{Step 1: Proof for $\psi_f=0$.} 
More precisely, we prove that there exists a linear operator $\mathcal{U}$ such that for all $\psi_0$ in $T_{\S} \varphi_1 \cap \H \cap H^{2(p+k)+3}_{(0)}(0,1)$, the control $U:=\mathcal{U}(\psi_0) \in H^k_0(0,T)$ satisfies 
 $
\P_{\J} \tild{\Psi}(T; \ U, \psi_0)=0
$
with the size estimates \eqref{size_estimate} (with $\psi_f=0$) and to prepare Step 2, solving the polynomial moment problem
\begin{equation}
\label{mom_poly_inter}
\int_0^T t^m U(t) dt=0, \quad \forall m=1, \ldots, k.
\end{equation}
By linearity,
\begin{equation*}
\tild{\Psi}(T; \ u+v, \psi_0) =\tild{\Psi}(T; \ u, \langle \psi_0, \varphi_1 \rangle \varphi_1) +\tild{\Psi}(T; \ v, \psi_0-\langle \psi_0, \varphi_1 \rangle \varphi_1).
\end{equation*}
Yet, by \cref{lem_EZ10}, we have $\P_{\J} \tild{\Psi}(T; \ \mathcal{L}(\psi_0), \psi_0-\langle \psi_0, \varphi_1 \rangle \varphi_1)=0$ with every control $\mathcal{L}(\psi_0)$ satisfying the estimates \eqref{size_estimate_u_inter} and the polynomial moment problem \eqref{mom_poly_inter} thanks to the boundary conditions \eqref{bc_u_inter}.
Therefore, to prove Step 1, it is enough to prove the existence of $u \in H^k_0(0,T)$ such that $\P_{\J} \tild{\Psi}(T; \ u, \langle \psi_0, \varphi_1 \rangle \varphi_1)=0$ with the size estimates \eqref{size_estimate} and the polynomial moment problem \eqref{mom_poly_inter}. Besides, solving explicitly \eqref{Schro_lin_rot} with initial condition $\langle \psi_0, \varphi_1 \rangle \varphi_1$, we get
\begin{equation*}
\tild{\Psi}(T; \ u, \langle \psi_0, \varphi_1 \rangle \varphi_1) 
=
\langle \psi_0, \varphi_1 \rangle \varphi_1
+ 
i
\sum \limits_{j=1}^{+\infty} 
\left(
\langle \mu \varphi_1, \varphi_j \rangle 
\int_{0}^T u(t) e^{-i (\lambda_j - \lambda_1)(T-t)} dt 
\right)
\varphi_j. 
\end{equation*}
Thus, the equality $ \P_{\J} \tild{\Psi}(T)=0$ is equivalent to the trigonometric moment problem 
\begin{equation}
\label{pbmom_inter}
\int_0^T u(t) e^{i (\lambda_j - \lambda_1) t} dt = -\frac{ \langle \psi_0, \varphi_1 \rangle \delta_{j,1}}{i \langle \mu \varphi_1, \varphi_j \rangle}, \quad \forall j \in J.  
\end{equation}
By \cref{thm_mom_L2_poly}, if $d(\psi_0):= \left( \left( (-1)^{k+1} k! \frac{\langle \psi_0, \varphi_1\rangle}{i \langle \mu \varphi_1, \varphi_1\rangle} \delta_{q,k} \right)_{q=1, \ldots, 2k}, (0)_{j \in \N^*} \right)$, the control $w:= \L_0^T( d(\psi_0))$ in $L^2(0,T)$ solves the following moment problem,
\begin{align}
\label{moment_0}
&\int_0^T w(t) e^{i (\lambda_j - \lambda_1)t} dt = 0, \quad \forall j \in \N^*, 
\\
\label{moment_der}
&\int_0^T t^q w(t) dt = (-1)^{k+1} k! \frac{\langle \psi_0, \varphi_1\rangle}{i \langle \mu \varphi_1, \varphi_1\rangle} \delta_{q,k}, \quad \forall q=1, \ldots, 2k, 
\end{align}
with the size estimate,
\begin{equation*}
\| w \|_{L^2(0,T)} \ioe C \left| \frac{\langle \psi_0, \varphi_1 \rangle}{i \langle \mu \varphi_1, \varphi_1 \rangle} \right|.
\end{equation*}
Let denote by $u$ the $k$-th primitive of $w$ with vanishing terms at $t=0$,
$$u(t):=\int_0^t \int_0^{t_1} \int_0^{t_2} \ldots \int_0^{t_{k-1}} w(t_k) dt_k dt_{k-1} \ldots dt_1, \quad \forall t \in [0,T],$$
meaning that $u$ solves $u^{(k)}=w$ with $u^{(m)}(0)=0$ for all $m=0, \ldots k-1$.  

\medskip \noindent 
Equation \eqref{moment_0} for $j=1$ and equations \eqref{moment_der} for $q=1, \ldots k-1$ entail that  $u^{(m)}(T)=0$ for all $m=0, \ldots k-1$.  Therefore, $u \in H^k_0(0,T)$ and using Poincaré inequality repeatedly, one gets the existence of $C>0$ such that for all $m=-(k+1), \ldots, k$, 
\begin{equation*}
\| u \|_{H^m_0(0,T)} \ioe C \| u \|_{H^k_0(0,T)}= C \| w \|_{L^2(0,T)} \ioe C | \langle \psi_0, \varphi_1 \rangle |,
\end{equation*}
giving the size estimates \eqref{size_estimate} as we deal with only one moment. Then, performing integrations by parts as $u$ has vanishing boundary terms, one can use \eqref{moment_der} for $q=k$ and \eqref{moment_0} for all $j \soe 2$ to prove that $u$ satisfies the moment problem \eqref{pbmom_inter}. Finally, one can use \eqref{moment_der} for $q=k+1, \ldots, 2k$ to get \eqref{mom_poly_inter}. 

\medskip \noindent 
Therefore, Step 1 is concluded with 
\begin{equation*}
\mathcal{U} := \mathcal{B}_k \circ \L_0^T \circ d + \L,
\end{equation*} 
where $\mathcal{B}_k$ is the operator primitiving $k$ times, given in \eqref{op_primitive}.

\bigskip
\noindent \emph{Step 2: Proof for any target.}
Let $(\psi_0, \psi_f)$ in $[T_{\S} \varphi_1 \cap H^{2(p+k)+3}_{(0)}] \times [T_{\S} \varphi_1 \cap \H \cap H^{2(p+k)+3}_{(0)}]$. By Step 1, there exists $U:=\mathcal{U} \left( \overline{\psi_f}-\overline{ \P_{\J} e^{-i\tild{A}T}\psi_0}\right) \in H^k_0(0,T)$ (where $\mathcal{U}$ is constructed at Step 1) such that
\begin{equation*}
\P_{\J} \tild{\Psi}(T; \ U, \overline{\psi_f}-\overline{ \P_{\J} e^{-i\tild{A}T}\psi_0})=0, 
\end{equation*}
with the polynomial moment \eqref{mom_poly_inter} and the size estimates \eqref{size_estimate} on $U$. Then, if we denote by $\tild{\psi}_{0}:= \overline{\tild{\Psi}}(T; \ U, \overline{\psi_f}-\overline{ \P_{\J} e^{-i\tild{A}T}\psi_0})$ and $V:=U(T- \cdot)$, by uniqueness, 
\begin{equation*}
\tild{\Psi}(t; \ V, \tild{\psi}_{0}) = \overline{\tild{\Psi}}(T-t; \ U, \overline{\psi_f}-\overline{ \P_{\J} e^{-i\tild{A}T}\psi_0}), 
\end{equation*}
and so, 
\begin{equation*}
\P_{\J} \tild{\Psi}(T; \ V, \tild{\psi}_{0}) = \psi_f- \P_{\J} e^{-i\tild{A}T}\psi_0.
\end{equation*}
And, finally, the solution associated with initial condition $\psi_0$ and control $V$ is given by 
\begin{equation*}
\tild{\Psi}(t; \ V, \psi_0) = e^{-i\tild{A}t} \left( \psi_0- \tild{\psi}_{0} \right) + \tild{\Psi}(t; \ V, \tild{\psi}_{0}),
\end{equation*}
and thus, as $\tild{\psi_{0}}$ is in $\H^{\perp}$ by construction, 
\begin{equation*}
\P_{\J} \tild{\Psi}(T; \ v, \psi_0) = \psi_f.
\end{equation*}
Besides, estimates \eqref{size_estimate} hold for $U$ and so for $V$ by translation. Moreover, the polynomial moments of $U$ given in \eqref{mom_poly_inter} entails the boundary conditions \eqref{bc} on $V$ because, for all $m=1, \ldots, k$,
\begin{equation*}
V_{m+1}(T) = \int_0^T \frac{ (T-t)^m}{m!} V(t) dt = \int_0^T \frac{ t^m}{m!} U(t) dt=0. 
\end{equation*}
Therefore, the proof of \cref{inv_droite_con} holds with 
\begin{equation*}
d \Theta_T (\varphi_1, 0) ^{-1}( \psi_0, \psi_f):= \tau_T \circ \mathcal{U}  \left( \overline{\psi_fe^{i \lambda_1 T}}-\overline{ \P_{\J} e^{-i\tild{A}T}\psi_0}\right),
\end{equation*}
where $\tau_T: U \mapsto U(T- \cdot)$ is the translation operator. 
\end{proof}
%
%
%
\begin{rem}
The key point to prove \cref{inv_droite_con} from \cite{EZ10} is \cref{rem:rot} as the work \cite{EZ10} asks to work with a stationary equilibrium. Therefore, it seems like such strategy would not hold when linearizing around a more complicated trajectory than $(\psi_1, u =0)$. For example, when linearizing around a linear combination of trajectories $\psi_j$, for $j \in \N^*$, it would not be straightforward anymore to find a good change of variables allowing us to work equivalently with a stationary equilibrium. That is why in Subsection \ref{C1-regularity}, we gave another proof of \cref{inv_droite_con}, relying on the solvability of a moment problem with simultaneous estimates, giving a strategy that could maybe work when linearizing around other trajectories, if needed. Notice that both strategy rely on the use of a weight function. 
\end{rem}


\bibliographystyle{plain}
\bibliography{biblio_schro_lin}

\end{document}